\definecolor{darkgreen}{rgb}{0.0, 0.6, 0.0}
\numberwithin{equation}{section}
\numberwithin{figure}{section}
\newlist{enua}{enumerate}{1}
\setlist*[enua]{label={\upshape (\arabic*)}, nosep}
\newlist{enur}{enumerate}{1}
\setlist*[enur]{label={\upshape (\roman*)}, nosep}
\crefname{thm}{Theorem}{Theorems}
\crefname{thma}{Theorem}{Theorems}
\crefname{prp}{Proposition}{Propositions}
\crefname{lem}{Lemma}{Lemmas}
\crefname{cor}{Corollary}{Corollaries}
\crefname{dfn}{Definition}{Definitions}
\crefname{rmk}{Remark}{Remarks}
\crefname{fct}{Fact}{Facts}
\crefname{ex}{Example}{Examples}
\crefname{ques}{Question}{Questions}
\theoremstyle{plain}
\newtheorem{thm}{Theorem}[section]
\newtheorem{prp}[thm]{Proposition}
\newtheorem{lem}[thm]{Lemma}
\newtheorem{cor}[thm]{Corollary}
\newtheorem{fct}[thm]{Fact}
\newtheorem{thma}{Theorem}
\theoremstyle{definition}
\newtheorem{dfn}[thm]{Definition}
\newtheorem{rmk}[thm]{Remark}
\newtheorem{ex}[thm]{Example}
\newcommand{\ass}{\mathrm{ass}}
\DeclareMathOperator{\Sub}{\mathsf{Sub}}
\DeclareMathOperator{\Fac}{\mathsf{Fac}}
\DeclareMathOperator{\Filt}{\mathsf{Filt}}
\DeclareMathOperator{\upper}{\mathsf{upper}} 
\newcommand{\T}{\mathsf{T}}
\newcommand{\F}{\mathsf{F}}
\DeclareMathOperator{\ie}{\mathsf{ie}}
\DeclareMathOperator{\ice}{\mathsf{ice}}
\DeclareMathOperator{\ike}{\mathsf{ike}}
\DeclareMathOperator{\ke}{\mathsf{ke}}
\DeclareMathOperator{\ce}{\mathsf{ce}}
\DeclareMathOperator{\torf}{\mathsf{torf}}
\DeclareMathOperator{\tors}{\mathsf{tors}}
\DeclareMathOperator{\serre}{\mathsf{serre}}
\DeclareMathOperator{\wide}{\mathsf{wide}}
\DeclareMathOperator{\domres}{\mathsf{dom}\text{.}\mathsf{resol}}
\newcommand{\imply}{\Rightarrow}
\newcommand{\equi}{\Leftrightarrow}
\newcommand{\Equi}{\quad \Longleftrightarrow \quad}
\newcommand{\ol}{\overline}
\newcommand{\xr}[1]{\xrightarrow{\, #1 \, }}
\newcommand{\inj}{\hookrightarrow}
\newcommand{\surj}{\twoheadrightarrow}
\newcommand{\simto}{\xr{\sim}}
\newcommand{\isoto}{\xr{\iso}}
\newcommand{\iso}{\cong}
\newcommand{\bbD}{\mathbb{D}}
\newcommand{\catA}{\mathcal{A}}
\newcommand{\catB}{\mathcal{B}}
\newcommand{\catC}{\mathcal{C}}
\newcommand{\catF}{\mathcal{F}}
\newcommand{\catS}{\mathcal{S}}
\newcommand{\catT}{\mathcal{T}}
\newcommand{\catX}{\mathcal{X}}
\newcommand{\catY}{\mathcal{Y}}
\DeclareMathOperator{\catmod}{\mathsf{mod}}
\DeclareMathOperator{\coh}{\mathsf{coh}}
\DeclareMathOperator{\cm}{\mathsf{cm}}
\DeclareMathOperator{\Hom}{Hom}
\DeclareMathOperator{\End}{End}
\DeclareMathOperator{\Ima}{Im}
\DeclareMathOperator{\Ker}{Ker}
\DeclareMathOperator{\Cok}{Cok}
\DeclareMathOperator{\ZZ}{Z} 
\newcommand{\pb}{\arrow[rd,"{\mathrm{PB}}",phantom]}
\newcommand{\op}{\text{op}}
\DeclareMathOperator{\Spec}{Spec}
\DeclareMathOperator{\Max}{Max}
\DeclareMathOperator{\Min}{Min}
\DeclareMathOperator{\Ass}{Ass}
\DeclareMathOperator{\htt}{ht}
\DeclareMathOperator{\depth}{depth}
\DeclareMathOperator{\grade}{grade} 
\DeclareMathOperator{\ann}{ann}
\newcommand{\mm}{\mathfrak{m}}
\newcommand{\pp}{\mathfrak{p}}
\newcommand{\qq}{\mathfrak{q}}
\newcommand{\kk}{\kappa}
\DeclareMathOperator{\Supp}{Supp}
\newcommand{\shO}{\mathscr{O}}
\newcommand{\LL}{\Lambda}
\newcommand{\pow}{\mathsf{pow}}
\newcommand{\arr}[1]{\arrow[{#1}]}
\newenvironment{bsmatrix}{\left[\begin{smallmatrix}}{\end{smallmatrix}\right]}
\begin{document}
\title{When are KE-closed subcategories torsion-free classes?}

\author{Toshinori Kobayashi}
\address{School of Science and Technology, Meiji University, 1-1-1 Higashi-Mita, Tama-ku, Kawasaki-shi, Kanagawa 214-8571, Japan}
\email{tkobayashi@meiji.ac.jp}

\author{Shunya Saito}
\address{Graduate School of Mathematics, Nagoya University, Chikusa-ku, Nagoya. 464-8602, Japan}
\email{m19018i@math.nagoya-u.ac.jp}

\subjclass[2020]{Primary 13C60; Secondary 13D02, 18E10}
\keywords{KE-closed subcategories; torsion-free classes; dominant resolving subcategories; exact categories; Cohen-Macaulay modules.}

\begin{abstract}
Let $R$ be a commutative noetherian ring and 
denote by $\catmod R$ the category of finitely generated $R$-modules.
In this paper,
we study KE-closed subcategories of $\catmod R$,
that is, additive subcategories closed under kernels and extensions.
We first give a characterization of KE-closed subcategories: 
\emph{a KE-closed subcategory is a torsion-free class in a torsion-free class}.
As an immediate application of the dual statement, 
we give a conceptual proof of Stanley-Wang's result about narrow subcategories.
Next, we classify the KE-closed subcategories of $\catmod R$ 
when $\dim R \le 1$ and when $R$ is a two-dimensional normal domain.
More precisely, in the former case, we prove that
KE-closed subcategories coincide with torsion-free classes in $\catmod R$.
Moreover, this condition implies $\dim R \le 1$ 
when $R$ is a homomorphic image of a Cohen-Macaulay ring
(e.g.\ a finitely generated algebra over a regular ring).
Thus, we give a complete answer for the title.
\end{abstract}

\maketitle
\tableofcontents

\section{Introduction}

Let $R$ be a commutative noetherian ring and 
denote by $\catmod R$ the category of finitely generated $R$-modules.
Homological properties of $\catmod R$ are closely related to
ring-theoretic properties of $R$.
For example,
Auslander-Buchsbaum-Serre's landmark theorem \cite{AB1,AB2,Ser} states that 
the global dimension of $\catmod R$ is finite if and only if $R$ is regular
when $R$ is local.
Moreover, in this case,
the global dimension of $\catmod R$ coincides with the Krull dimension of $R$.
As a consequence of this theorem, they proved that
a localization of a regular local ring is again regular.
The proof without Auslander-Buchsbaum-Serre's theorem is still unknown.
Their works inspired the study of homological properties of $\catmod R$
in commutative ring theory.

One of the principal approaches to studying homological properties of $\catmod R$
is classifying its subcategories closed under some operations. 
It has been studied by several authors so far.
One of the most classical and influential results is 
Gabriel's classification of Serre subcategories \cite{Gab}.
A \emph{Serre subcategory} is an additive subcategory 
closed under subobjects, quotients and extensions.
He established an explicit one-to-one correspondence between
the Serre subcategories of $\catmod R$ and the upper sets of the prime spectrum $\Spec R$.
The next most important result is 
Takahashi's classification of torsion-free classes  \cite{Takahashi}.
A \emph {torsion-free class} is an additive subcategory 
closed under subobjects and extensions.
He constructed an explicit bijection between 
the torsion-free classes of $\catmod R$ and the subsets of $\Spec R$.
The classification of other classes of subcategories is often reduced to these two classifications.
In fact,
most classes of subcategories in $\catmod R$ coincide with 
either Serre subcategories or torsion-free classes.
The following diagram summarizes the implications of various subcategories of an abelian category
(see \cref{dfn:several subcat}):
\begin{figure}[H]
\begin{tikzpicture}[
arrow/.style={-Implies,double equal sign distance,shorten >=1pt,shorten <=1pt},
xscale=3, yscale=1.2
]

\fill[gray!20, rounded corners]
(-0.3, 0.5) -- (-0.3, -2.5) -- (0.6, -3.5) -- (0.6, -5.5) -- (1.4, -5.5) --
(1.4, -0.9) -- (0.5, 0.5) -- cycle;

\fill[gray!50, rounded corners]
(-1.5, -0.5) -- (-1.5, -3.5) -- (-0.5, -4.5) --
(0.3, -4.5) -- (0.3, -3.5) -- (-0.5, -2.5) -- (-0.5, -0.5) -- cycle;

\node (Serre) at (0, 0) {Serre};
\node (torsion-free) at (-1, -1) {torsion-free class};
\node (torsion) at (1, -1) {torsion class};
\node (wide) at (0, -2) {wide};
\node (IKE) at (-1, -3) {IKE-closed};
\node (ICE) at (1, -3) {ICE-closed};
\node (IE) at (0, -4) {IE-closed};
\node (KE) at (-1, -5) {KE-closed};
\node (CE) at (1, -5) {CE-closed};
\node (E) at (0, -6) {extension-closed};

\draw[arrow] (Serre) -- (torsion);
\draw[arrow] (Serre) -- (torsion-free);
\draw[arrow] (Serre) -- (wide);
\draw[arrow] (torsion-free) -- (IKE);
\draw[arrow] (torsion) -- (ICE);
\draw[arrow] (wide) -- (IKE);
\draw[arrow] (wide) -- (ICE);
\draw[arrow] (ICE) -- (IE);
\draw[arrow] (IKE) -- (IE);
\draw[arrow] (ICE) -- (CE);
\draw[arrow] (IKE) -- (KE);
\draw[arrow] (CE) -- (E);
\draw[arrow] (KE) -- (E);
\draw[arrow] (IE) -- (E);
\end{tikzpicture}
\end{figure}
The contributions of the various authors \cite{Takahashi,SW,IMST,Eno} have shown that
the classes of subcategories in each of the two gray regions in the above diagram 
coincide in $\catmod R$.
For example, Stanley-Wang \cite{SW} proved that
CE-closed subcategories (a.k.a.\ narrow subcategories) coincide with Serre subcategories in $\catmod R$.
See \cite{IK,Sai2} for extensions of these results to noncommutative rings and schemes respectively.

The present paper aims to study KE-closed subcategories of $\catmod R$,
that is, additive subcategories closed under kernels and extensions.
It is natural to ask
whether KE-closed subcategories and torsion-free classes coincide in $\catmod R$.
However, there is an easy counter example:
for a two-dimensional Cohen-Macaulay ring $R$,
the category $\cm R$ of maximal Cohen-Macaulay $R$-modules
is a KE-closed subcategory but not a torsion-free class of $\catmod R$.
The next most natural question is 
when KE-closed subcategories and torsion-free classes coincide in $\catmod R$.
The following result gives a complete answer for this question, under some mild assumptions.
\begin{thma}[{$=$ \cref{prp:ke=torf imply dim=1,KE=torf in 1-dim}}]\label{thma:KE=torf}
Consider the following conditions for a commutative noetherian ring $R$.
\begin{enur}
\item
$\dim R \le 1$.
\item
KE-closed subcategories and torsion-free classes coincide in $\catmod R$.
\end{enur}
Then the implication ``$\mathrm{(i)}\imply\mathrm{(ii)}$'' holds.
Moreover,
the equivalence ``$\mathrm{(i)}\equi\mathrm{(ii)}$'' holds
when $R$ is a homomorphic image of a Cohen-Macaulay ring
(e.g.\ a finitely generated algebra over a regular ring).
\end{thma}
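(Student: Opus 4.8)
The plan is to prove the two implications separately: ``(i)$\imply$(ii)'' unconditionally, and the converse ``(ii)$\imply$(i)'' under the Cohen--Macaulay-image hypothesis. For ``(i)$\imply$(ii)'' one direction is formal (a torsion-free class is closed under submodules, hence under kernels). For the other, let $\catX$ be KE-closed. By the characterization of KE-closed subcategories it is a torsion-free class of the exact category $\catF$, where $\catF$ is the smallest torsion-free class of $\catmod R$ containing $\catX$; by Takahashi's classification $\catF=\catF_\Phi:=\{M:\Ass M\subseteq\Phi\}$ with $\Phi:=\bigcup_{Y\in\catX}\Ass Y$. Since an inclusion $N\subseteq M$ with $M\in\catX$ and $M/N\in\catF$ is an admissible monomorphism in $\catF$, it follows that $N\in\catX$ in that case; so it remains to prove $\catF\subseteq\catX$, i.e.\ that $\Ass M\subseteq\Phi$ forces $M\in\catX$.

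Next I would use $\dim R\le1$ to reduce to two cases. Writing $\Gamma(M)$ for the largest finite-length submodule, both terms of $0\to\Gamma(M)\to M\to M/\Gamma(M)\to0$ lie in $\catF$: one has $\Ass(M/\Gamma(M))\subseteq\Min R$, and a minimal prime of $R$ in $\Supp M$ is minimal in $\Supp M$, hence in $\Ass M\subseteq\Phi$, so $\Ass(M/\Gamma(M))\subseteq\Phi$, and $\Ass(\Gamma(M))\subseteq\Ass M\subseteq\Phi$ as well. By extension-closedness it is enough to put (a) finite-length modules of $\catF$ and (b) ``pure'' modules of $\catF$ (those with $\Ass M\subseteq\Min R$) into $\catX$. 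For (a): a finite-length module has $\Ass M=\Supp M$, so each composition factor $R/\mm$ has $\mm\in\Phi$, and it suffices to show $R/\mm\in\catX$ for maximal $\mm\in\Phi$. Choosing $Y\in\catX$ with $\mm\in\Ass Y$, the equality $\Ass(Y/\Gamma_\mm(Y))=\Ass Y\setminus\{\mm\}\subseteq\Phi$ shows $\Gamma_\mm(Y)$ is an admissible subobject of $Y$ in $\catF$, so $\Gamma_\mm(Y)\in\catX$; it is nonzero, of finite length, and supported only at $\mm$. Inside the length category $\catC_\mm$ of modules supported at $\{\mm\}$, whose unique simple object is $R/\mm$, the subcategory $\catX\cap\catC_\mm$ is KE-closed, and for $0\ne Z\in\catX\cap\catC_\mm$ of length $\ge2$ the composite of a surjection $Z\surj R/\mm$ with an injection $R/\mm\inj Z$ is an endomorphism of $Z$ whose kernel is in $\catX\cap\catC_\mm$ and strictly shorter; iterating gives $R/\mm\in\catX$, and then extension-closedness gives all of (a).

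The pure case (b) is the main obstacle. Here $\Ass M\subseteq\Min R\cap\Phi$, and the aim is to build $M$ from objects of $\catX$ using kernels and extensions. A natural first move is the sequence $0\to M\to\bigoplus_{\pp\in\Ass M}M/Q_\pp\to C\to0$ attached to an irredundant primary decomposition $0=\bigcap_\pp Q_\pp$ in $M$: each $M/Q_\pp$ is $\pp$-coprimary (hence in $\catF$, and an instance of case (b) itself) and, because $\dim R\le1$, the cokernel $C$ has finite length. If the middle term and $C$ lay in $\catX$, then $M=\Ker\!\left(\bigoplus_\pp M/Q_\pp\to C\right)\in\catX$. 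The delicate part --- which I expect to absorb most of the work --- is that $C$ need not satisfy $\Ass C\subseteq\Phi$, so (a) does not apply to it directly; one must show that the composition factors $R/\mm$ of $C$ (all having $\mm\in\Supp M$) can be absorbed by passing from $M/Q_\pp$ to a larger module of $\catX$, together with the fact that over a one-dimensional ring every $\pp$-coprimary module supported on $V(\pp)$ is cogenerated by the objects of $\catX$ supplied by the argument of (a). I would set this up as an induction on the lengths of the finite-length defects with careful bookkeeping of associated primes.

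For ``(ii)$\imply$(i)'' under the hypothesis, I argue by contraposition: assume $\dim R\ge2$, fix a maximal ideal $\mm$ with $\dim R_\mm\ge2$, and set $\catX:=\{M\in\catmod R:M_\mm=0\text{ or }\depth_{R_\mm}M_\mm\ge2\}$. This is closed under extensions, and closed under kernels: localizing $0\to K\to A\to B$ at $\mm$ and writing $C_\mm=\Ima(A_\mm\to B_\mm)$, the depth lemma gives $\depth C_\mm\ge1$ and then $\depth K_\mm\ge\min(\depth A_\mm,\depth C_\mm+1)\ge2$. It is not closed under submodules: if $0\ne M\in\catX$ with $M_\mm\ne0$, then $\mm M\subsetneq M$ by Nakayama at $\mm$, and the depth lemma applied to $0\to(\mm M)_\mm\to M_\mm\to M_\mm/\mm M_\mm\to0$ forces $\depth(\mm M)_\mm\le1$, so $\mm M\notin\catX$; since also $R/\mm\notin\catX$, $\catX$ is strictly between $0$ and $\catmod R$. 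Producing such an $M$ is the one step using the hypothesis: write $R_\mm=S/I$ with $S$ Cohen--Macaulay local (localize a Cohen--Macaulay presentation of $R$) and, after a polynomial extension if needed, $c:=\grade_S(I)\ge1$; then $\omega:=\Ext^c_S(R_\mm,S)\ne0$, and a standard depth computation with the $S$-dual of a finite free resolution of $R_\mm$ --- where $S$ being Cohen--Macaulay forces the relevant truncation to be a short free resolution --- together with two applications of the depth lemma gives $\depth_S\omega\ge\min(2,\dim R_\mm)=2$; lifting $\omega$ to a finitely generated $R$-module $M$ with $M_\mm\cong\omega$ finishes the construction. Hence $\catX$ is KE-closed, proper, and not closed under submodules, so not a torsion-free class, which proves the contrapositive; the depth estimate for $\omega$ --- the existence of a finitely generated $R$-module of depth exceeding $\dim R_\mm-1=1$ at $\mm$ --- is the point that genuinely needs $R$ to be a homomorphic image of a Cohen--Macaulay ring.
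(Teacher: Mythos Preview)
Your argument for ``(ii)$\Rightarrow$(i)'' is essentially the paper's: the KE-closed subcategory defined by $\depth_{R_\mm}(-)_\mm\ge 2$, the Nakayama/depth-lemma trick on $\mm M\subseteq M$, and the construction of a module of depth $\ge 2$ from the Cohen--Macaulay presentation all match. Your $\omega=\Ext^c_S(R_\mm,S)$ is the paper's $\Hom_S(R,S/I)$ (with $I$ generated by a maximal $S$-regular sequence in $\Ker\pi$) rewritten through a Koszul shift, and the depth estimate is the same Hom-depth inequality the paper cites.

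The forward direction ``(i)$\Rightarrow$(ii)'' has a genuine gap in your case (b). The reduction to $\catF_\Phi\subseteq\catX$ is correct, and case (a) is fine. But the $\pp$-coprimary case is not a proof. The primary-decomposition sequence $0\to M\to\bigoplus_\pp M/Q_\pp\to C\to 0$ only helps if each $M/Q_\pp$ is already known to lie in $\catX$ and $C$ embeds into some object of $\catX$; the first is the very thing to be proved (with a single associated prime), and for the second you yourself concede $\Ass C\nsubseteq\Phi$ in general --- indeed when $\Phi=\{\pp\}$ for a single minimal prime, case (a) supplies \emph{no} modules at all, so there is nothing to ``absorb'' the extraneous maximal ideals of $C$ into. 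The heart of the matter is to produce many $\pp$-coprimary modules in $\catX$ starting from a single one, and nothing in your outline does this; ``careful bookkeeping of associated primes'' cannot bridge the gap.

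The paper's route here is entirely different and carries the real content. One first reduces (by intersecting with $\catmod^{\ass}_{\{\pp\}}R$) to $\Ass\catX=\{\pp\}$ and passes to the domain $R/\pp$. The key new idea is that for any nonzero torsion-free $M\in\catX$ over $R/\pp$, the \emph{center} $\ZZ(M)=\Hom_{\End M}(M,M)$ lies in $\catX$ (it is a kernel between direct sums of $M$) and is an $R/\pp$-subalgebra $S\subseteq Q(R/\pp)$. One then builds a finite chain of subalgebras $R/\pp=S_0\subsetneq\cdots\subsetneq S_n=S$ with $S_{i+1}\subseteq\mm_i:\mm_i$ for suitable $\mm_i\in\Max S_i$, and runs a decreasing induction on $i$ --- using a characterization of KE-closed subcategories containing a maximal ideal via dominant resolving subcategories over one-dimensional rings --- to conclude that the restriction of $\catX$ to $\catmod S_i$ contains all torsion-free $S_i$-modules. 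At $i=0$ this gives $R/\pp\in\catX$, and a separate lifting lemma then yields $\catmod^{\ass}_{\{\pp\}}R\subseteq\catX$. The center construction, the subalgebra chain, and the input from dominant resolving subcategories are the missing ideas in your case (b).
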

\cref{thma:KE=torf} characterizes the ring-theoretic property $\dim R \le 1$
by the homological property of $\catmod R$.
Since the torsion-free classes of $\catmod R$ were classified by Takahashi
(cf.\ \cref{fct:Takahashi}), 
\cref{thma:KE=torf} also gives a classification of KE-closed subcategories of $\catmod R$
when $\dim R \le 1$.
We also classify the KE-closed subcategories of $\catmod R$
when $R$ is a two-dimensional normal domain 
(see \cref{prp:classify KE when two-dim normal local domain,rmk:classification of dom resol}).
Especially, we obtain the following classification result:
\begin{thma}[{$=$ \cref{prp:classify KE when two-dim normal local domain}}]\label{thmb:KE=torf+cm}
Let $R$ be a two-dimensional noetherian normal local domain.
Then a KE-closed subcategory of $\catmod R$ is either of the following:
\begin{enur}
\item
a torsion-free class of $\catmod R$,
\item
the category $\cm R$ of maximal Cohen-Macaulay $R$-modules.
\end{enur}
\end{thma}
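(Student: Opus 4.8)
The plan is to feed the characterization theorem into Takahashi's classification and then dévissage to dimension one. By the characterization of KE-closed subcategories, $\catC$ is a torsion-free class in some torsion-free class of $\catmod R$; replacing that torsion-free class by the one generated by $\catC$ and invoking \cref{fct:Takahashi}, I may assume $\catC$ is a torsion-free class in $\catF:=\{M\in\catmod R:\Ass_R M\subseteq\Phi\}$, where $\Phi=\bigcup_{M\in\catC}\Ass_R M$. Since $R$ is a two-dimensional local domain, $\Spec R=\{(0)\}\sqcup X^1\sqcup\{\mm\}$ with $X^1$ the (infinite) set of height-one primes; normality makes each $R_\pp$ ($\pp\in X^1$) a discrete valuation ring and makes $\cm R$ coincide with the subcategory of reflexive modules (equivalently, of modules satisfying $(S_2)$). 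If $\catC$ is closed under submodules, it is a torsion-free class of $\catmod R$ and case~(i) holds; so assume from now on that it is not, and let us prove $\catC=\cm R$.

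First, $\catC$ contains a module of positive rank. Otherwise every $M\in\catC$ is torsion, so for each such $M$ the subcategory $\{N\in\catC:\ann_R M\cdot N=0\}$ is KE-closed in $\catmod(R/\ann_R M)$, a ring of dimension at most one; by the implication $\mathrm{(i)}\imply\mathrm{(ii)}$ of \cref{thma:KE=torf} it is a torsion-free class there, hence submodule-closed, which forces $\catC$ to be submodule-closed — a contradiction. In particular $(0)\in\Phi$. Now the key step — which I expect to be the main obstacle — is to show $R\in\catC$. Here one uses that $\catC$ is automatically closed under direct summands (the kernel of an idempotent endomorphism of an object of $\catC$ lies in $\catC$) and that, for every $M\in\catC$ and every ideal $I$, the submodule $\Gamma_I(M)=\bigcup_n(0:_M I^n)$ lies in $\catC$, since dualizing a finite presentation of $I^n$ realizes $\Hom_R(I^n,M)$ as the kernel of a map between finite direct sums of copies of $M$, and then $(0:_M I^n)=\Ker(M\to\Hom_R(I^n,M))$. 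Starting from a positive-rank module, an endomorphism that is generically of rank one has a kernel in $\catC$ of strictly smaller rank, so iteration yields a rank-one module in $\catC$; combining this with the above closure properties and with the failure of submodule-closedness (which gives $M\in\catC$ and $N\subseteq M$ with $\Ass_R(M/N)\not\subseteq\Phi$), one produces $R$ inside $\catC$. Once $R\in\catC$ one gets $\cm R\subseteq\catC$ for free: a reflexive module is the kernel of a map between finite free modules (dualize a finite presentation of its $R$-dual), and $\catC$ is closed under kernels and finite direct sums.

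It remains to prove $\catC\subseteq\cm R$, i.e.\ that every nonzero module in $\catC$ has depth two. For $M\in\catC$ the socle $(0:_M\mm)=\Ker(M\to\Hom_R(\mm,M))$ and the largest finite-length submodule $\Gamma_\mm(M)$ again lie in $\catC$; so if $\catC$ contained a module of depth zero, it would contain $R/\mm$, hence all finite-length modules, and then — using $\cm R\subseteq\catC$, the expression of each torsion-free module $N$ as $\Ker(N^{**}\to\Cok(N\to N^{**}))$ (whose cokernel is of finite length because $N\to N^{**}$ is an isomorphism in codimension one), the short exact sequence $0\to\Gamma_\mm(M)\to M\to M/\Gamma_\mm(M)\to0$, and the dévissage of dimension-one torsion modules via \cref{thma:KE=torf} — one shows $\catC$ is submodule-closed, a contradiction. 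A parallel but more delicate analysis, extracting the torsion submodule of a hypothetical depth-one module (which lies in $\catC$ by the $\Gamma_I$ argument) and using reflexivization together with closure under kernels, rules out modules of depth exactly one, again against submodule-closedness. Hence every module in $\catC$ is maximal Cohen–Macaulay and, with the previous paragraph, $\catC=\cm R$, which is case~(ii). The torsion-free/finite-length bookkeeping here is routine once the relevant exact sequences are written down; as signalled, the genuinely hard points are forcing $R$ (equivalently all reflexive modules) into $\catC$ and the depth-one elimination, and an alternative to the hands-on arguments there is to route everything through the paper's classification of (dominant) resolving subcategories of $R$.
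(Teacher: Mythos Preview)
Your case split matches the paper's: when $(0)\notin\Phi$ you reduce to dimension one (the paper does this via \cref{prp:KE=torf when ht ge dim - 1}), and when $(0)\in\Phi$ you aim for $\catC=\cm R$. But the two steps you yourself flag as hard are not actually carried out. For $R\in\catC$: your rank-reduction sketch does not explain how a rank-one module yields $R$, and the invocation of ``failure of submodule-closedness'' is a red herring here. The paper's argument is short and clean: once $(0)\in\Phi$, \cref{prp:restrict dense torf} gives a torsion-free $M\in\catC$; then the center $\ZZ_R(M)=\Hom_{\End_R(M)}(M,M)$ lies in $\catC$ by a kernel computation (\cref{center}), and by \cref{subalgebra} it is a finite $R$-subalgebra of $Q(R)$, hence equals $R$ by normality. (Your idea can be salvaged along similar lines: from a rank-one torsion-free $I\in\catC$ one has $\Hom_R(I,I)\in\catC$ by \cref{fct:KE-closed imply Hom-ideal}, and $\Hom_R(I,I)\cong(I:I)=R$ by normality --- but you do not say this.)

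For $\catC\subseteq\cm R$, your depth-zero and depth-one eliminations are only sketched and do not visibly close; in particular, the depth-zero argument needs $\catC$ to contain $\catmod^{\ass}_{\{\pp\}}R$ for the height-one primes $\pp\in\Phi$, which you have not established, and the depth-one case is just asserted to be ``parallel but more delicate''. The paper bypasses all of this via the dominant-resolving machinery you mention only at the end: once $R\in\catC$, \cref{KE-closed res} makes $\catC$ a dominant resolving subcategory; then if $\catC$ is not a torsion-free class, \cref{torf res} forces $\inf_{X\in\catC}\depth X_{\mm}\ge 2$, and the depth-function characterization \cref{thm:dom res} gives $\catC=\cm R$ immediately. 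As written, your proposal is an outline with the two decisive arguments missing.
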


A key ingredient of the proofs of \cref{thma:KE=torf,thmb:KE=torf+cm} is the notion of dominant resolving subcategories.
This intriguing class of subcategories was introduced by Dao and Takahashi \cite{DT}.
For the precise definition, see \cref{dfn:dom res}.
We note that many basic facts on dominant resolving subcategories, including their classification, are established in \cite{DT, Takahashi2}.
We reveal how dominant resolving subcategories relate to KE-closed subcategories (see \cref{KE-closed res}).

When $R$ is a Cohen-Macaulay ring with a canonical module,
there is another proof of \cref{thma:KE=torf} which is much simpler than the general case
(see \cref{prp:KE=torf when 1-dim CM with can mod}).
The proof is based on the following characterization of KE-closed subcategories
in terms of a sequence of subcategories.
\begin{thma}[{$=$\cref{prp:KE=torf in torf,prp:CE=tors in tors}}]\label{thma:KE=torf in torf}
Let $\catX$ be a subcategory of an abelian category $\catA$.
\begin{enua}
\item
The following are equivalent.
\begin{itemize}
\item
$\catX$ is KE-closed in $\catA$.
\item
There exists a torsion-free class $\catF$ of $\catA$
such that $\catX$ is a torsion-free class of $\catF$
in the sense of exact categories (see \cref{dfn:subcat in ex cat}).
\end{itemize}

\item
The following are equivalent.
\begin{itemize}
\item
$\catX$ is CE-closed in $\catA$.
\item
There exists a torsion class $\catT$ of $\catA$
such that $\catX$ is a torsion-free class of $\catT$
in the sense of exact categories.
\end{itemize}
\end{enua}
\end{thma}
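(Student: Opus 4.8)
The plan is to prove part (1) directly and then to deduce part (2) from it by passing to the opposite category. I use throughout the standard fact that an extension-closed subcategory $\catE$ of an abelian category carries a canonical exact structure whose conflations are exactly the short exact sequences of the ambient category all of whose terms lie in $\catE$. The implication ``such an $\catF$ exists $\imply$ $\catX$ is KE-closed'' in part (1) is the routine direction: if $\catF$ is a torsion-free class of $\catA$ and $\catX$ is a torsion-free class of the exact category $\catF$, then $\catX$ is extension-closed in $\catA$ (both $\catF\subseteq\catA$ and $\catX\subseteq\catF$ being extension-closed), and for $f\colon A\to B$ with $A,B\in\catX$ the sequence $0\to\Ker f\to A\to\Ima f\to 0$ lies entirely in $\catF$ since $\catF$ is closed under subobjects, hence is a conflation of $\catF$; thus $\Ker f\inj A$ is an inflation, and closure of $\catX$ under admissible subobjects gives $\Ker f\in\catX$.

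For the converse I would take $\catF:=\Filt(\Sub\catX)$, the extension closure of the subobject closure of $\catX$, and show first that this is a torsion-free class of $\catA$ (indeed the smallest one containing $\catX$). It is extension-closed by construction, and it is closed under subobjects because $\Sub\catX$ is and forming extension closures preserves closure under subobjects: for a subobject $N$ of an object with a filtration $M_0\subseteq\cdots\subseteq M_n$ whose subquotients lie in $\Sub\catX$, the intersections $N\cap M_i$ filter $N$ with subquotients embedding into the $M_i/M_{i-1}$, hence in $\Sub\catX$. The second step is to show that $\catX$ is a torsion-free class of the exact category $\catF$: it is extension-closed there, being already so in $\catA$; and for an inflation $X'\inj Y$ of $\catF$ with $Y\in\catX$ one has $Y/X'\in\catF$, and I would show $X'\in\catX$ by induction on the length of a filtration $0=F_0\subseteq\cdots\subseteq F_n=Y/X'$ with subquotients in $\Sub\catX$. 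In the case $n=1$, one has $Y/X'\inj Z$ for some $Z\in\catX$, so $X'=\Ker\bigl(Y\surj Y/X'\inj Z\bigr)$ lies in $\catX$ by kernel-closedness. In general, letting $Y_1\subseteq Y$ be the preimage of $F_{n-1}$, the case $n=1$ applied to $Y_1\inj Y$ (its cokernel is $F_n/F_{n-1}\in\Sub\catX$) gives $Y_1\in\catX$, and the inductive hypothesis applied to $X'\inj Y_1$ (its cokernel $F_{n-1}$ has a filtration of length $n-1$) gives $X'\in\catX$.

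Part (2) I would deduce by applying part (1) in $\catA^{\op}$: a subcategory is CE-closed in $\catA$ exactly when it is KE-closed in $\catA^{\op}$, a torsion class of $\catA$ is exactly a torsion-free class of $\catA^{\op}$, and — dualizing the exact structures, in the sense of \cref{dfn:subcat in ex cat} — the conclusion of part (1) transported back to $\catA$ is exactly the equivalence asserted in (2). Unwound, for CE-closed $\catX$ the witnessing torsion class may be taken to be $\catT:=\Filt(\Fac\catX)$, the smallest torsion class of $\catA$ containing $\catX$, and the proof that $\catX$ sits inside the exact category $\catT$ as the statement requires is the mirror of the one just given, with $\Sub$, kernels and subobjects replaced throughout by $\Fac$, cokernels and quotients (the inductive kernel trick becoming a cokernel trick, and the intersection argument for preservation of closure under subobjects becoming the dual argument for preservation of closure under quotients).

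The step I expect to be the real obstacle is the last one in the converse of (1): showing that $\catX$ is closed under admissible subobjects of $\catF=\Filt(\Sub\catX)$. The difficulty is that the cokernel $Y/X'$ of an inflation of $\catF$ need not lie in $\Sub\catX$ but only in its extension closure, which is precisely what forces the induction on filtration length; and one has to keep in mind that an inflation of the exact category $\catF$ is a monomorphism whose cokernel again lies in $\catF$, for it is that cokernel which is fed, one filtration step at a time, into the kernel-closedness of $\catX$ via the composite $Y\surj Y/X'\inj Z$.
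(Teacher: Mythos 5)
Your argument is correct and essentially identical to the paper's: the nontrivial direction is the same induction on the length of a $\Sub\catX$-filtration of the cokernel, with the base case handled via $\Ker\bigl(Y\surj Y/X'\inj Z\bigr)$ and the inductive step peeling off the top layer (the paper phrases this as a pullback square, you as the preimage of a filtration term --- the same construction), while the easy direction and the dualization for (2) match the paper as well. The only caveat is that the displayed statement of (2) says ``torsion-free class of $\catT$'' where it should say ``torsion class of $\catT$'' (as in \cref{prp:CE=tors in tors}; a torsion-free class of a torsion class characterizes IE-closed subcategories instead); what you actually prove is the correct, intended dual.
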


As another consequence of \cref{thma:KE=torf in torf},
we can prove Stanley-Wang's result on the level of the theory of abelian categories.
\begin{thma}[{$=$\cref{prp:subcat when Serre=tors}}]
Let $\catA$ be an abelian category.
If Serre subcategories and torsion classes coincide in $\catA$,
then Serre subcategories and CE-closed subcategories also coincide in $\catA$.
\end{thma}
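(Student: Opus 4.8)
The statement to prove is: if Serre subcategories and torsion classes coincide in an abelian category $\catA$, then Serre subcategories and CE-closed subcategories also coincide. Since a Serre subcategory is always CE-closed (it is closed under cokernels and extensions), only the reverse inclusion needs argument: every CE-closed subcategory $\catX$ of $\catA$ must be shown to be Serre. The plan is to feed $\catX$ into \cref{thma:KE=torf in torf}(b), which gives a torsion class $\catT$ of $\catA$ such that $\catX$ is a torsion-free class of $\catT$ in the sense of exact categories, and then to upgrade both layers using the hypothesis.

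First I would apply part (b) of \cref{thma:KE=torf in torf} to obtain the torsion class $\catT\subseteq\catA$ with $\catX$ a torsion-free class of the exact category $\catT$. By the standing hypothesis, $\catT$ is in fact a Serre subcategory of $\catA$; in particular $\catT$ is an abelian subcategory of $\catA$ closed under subobjects, quotients, and extensions, and the exact structure on $\catT$ coming from $\catA$ is the canonical (abelian) one. So "$\catX$ is a torsion-free class of $\catT$ in the sense of exact categories" becomes simply: $\catX$ is a torsion-free class of the abelian category $\catT$ in the usual sense, i.e.\ closed under subobjects and extensions inside $\catT$.

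Next I would observe that the hypothesis "Serre $=$ torsion in $\catA$" is inherited by $\catT$: a torsion class of the abelian category $\catT$ is in particular an extension-closed, quotient-closed subcategory of $\catA$ contained in $\catT$, hence — one should check — a torsion class of $\catA$, therefore a Serre subcategory of $\catA$, and finally a Serre subcategory of $\catT$. (The one point needing care here is that being closed under quotients and extensions \emph{in $\catT$} is equivalent to being so \emph{in $\catA$}, which holds precisely because $\catT$ is a Serre subcategory and so the inclusion $\catT\hookrightarrow\catA$ is exact and reflects subobjects/quotients.) Thus in $\catT$ as well, every torsion class is Serre. But the dual is what I actually want: I want torsion-\emph{free} classes of $\catT$ to be Serre. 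Applying the already-proved \cref{thma:KE=torf in torf}-type machinery, or more directly the dual of the hypothesis, I note that "Serre $=$ torsion" is a self-dual condition up to passing to $\catA^{\op}$, and Serre subcategories are self-dual, so "Serre $=$ torsion" in $\catA$ forces "Serre $=$ torsion-free" in $\catA$, and likewise in the Serre subcategory $\catT$. Hence the torsion-free class $\catX$ of $\catT$ is a Serre subcategory of $\catT$, and a Serre subcategory of a Serre subcategory is a Serre subcategory of the ambient category, so $\catX$ is Serre in $\catA$, as desired.

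The main obstacle I anticipate is the bookkeeping around exact-category versus abelian notions: one must verify that when $\catT$ happens to be a Serre (hence abelian) subcategory, the notion of "torsion-free class in the sense of exact categories" in \cref{dfn:subcat in ex cat} collapses to the ordinary abelian notion, and that closure conditions taken relative to $\catT$ agree with those taken relative to $\catA$. Once that identification is made, the argument is just two applications of the hypothesis (once to the outer layer $\catT$, once to the inner layer $\catX$, using self-duality of the condition "Serre $=$ torsion") together with transitivity of "being a Serre subcategory." I would also state explicitly, perhaps as a preliminary remark, the easy direction that Serre subcategories are always CE-closed, so that the proof reduces cleanly to the inclusion CE-closed $\subseteq$ Serre.
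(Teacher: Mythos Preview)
There is a genuine gap. Your self-duality step is incorrect: the hypothesis ``$\serre\catA=\tors\catA$'' does \emph{not} force ``$\serre\catA=\torf\catA$''. Passing to the opposite category turns the hypothesis into ``$\serre(\catA^{\op})=\tors(\catA^{\op})$'', which is the statement $\serre\catA=\torf\catA$ read in $\catA^{\op}$, not in $\catA$ itself. For a concrete counterexample take $\catA=\catmod R$ with $R$ a discrete valuation ring: Serre subcategories and torsion classes coincide (both correspond to the specialization-closed subsets of $\Spec R$), but the class of free $R$-modules is a torsion-free class that is not Serre. So even after you correctly argue that $\catT$ is Serre and hence an abelian subcategory, and that $\catX$ is a torsion-free class of $\catT$ in the ordinary abelian sense, you cannot conclude that $\catX$ is Serre in $\catT$.

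The underlying problem is that the characterization you invoke is not the right one: a CE-closed subcategory is a \emph{torsion} class in a torsion class, not a torsion-\emph{free} class in a torsion class. (The wording of \cref{thma:KE=torf in torf}(2) in the introduction contains a slip on exactly this point; the result it points to, \cref{prp:CE=tors in tors}, and the summary at the end of \S\ref{s:KE=torf in torf}, both say ``CE-closed $=$ torsion in torsion''. The condition ``torsion-free in torsion'' instead characterizes IE-closed subcategories, by \cref{prp:char IE}.) With the correct characterization the argument is immediate and coincides with the paper's own proof: $\catX$ is a torsion class of some torsion class $\catT$; by hypothesis $\catT$ is Serre; by \cref{prp:torf in Serre} a torsion class of a Serre subcategory is already a torsion class of $\catA$; hence $\catX\in\tors\catA=\serre\catA$. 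No duality argument and no second application of the hypothesis inside $\catT$ are needed.
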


Finally, combining \cref{thma:KE=torf,thma:KE=torf in torf},
we obtain the following classifications of torsion(-free) classes of 
a torsion-free class of $\catmod R$.
\begin{thma}[{$=$\cref{prp:tors=Serre in torf,prp:classify torf in torf}}]\label{thma:classify subcat in torf}
Let $R$ be a commutative noetherian ring.
Let $\Phi$ be a subset of $\Spec R$
and denote by $\catmod^{\ass}_{\Phi} R$ the torsion-free class corresponding to $\Phi$ 
by Takahashi's classification (see \cref{fct:Takahashi}).
\begin{enua}
\item
Serre subcategories and torsion classes coincide in $\catmod^{\ass}_{\Phi} R$.
\item
If $\dim R \le 1$, there is a bijection between the following:
\begin{itemize}
\item 
The set of torsion-free classes of $\catmod^{\ass}_{\Phi} R$.
\item
The power set of $\Phi$.
\end{itemize}
\end{enua}
\end{thma}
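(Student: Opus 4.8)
Throughout write $\catF := \catmod^{\ass}_\Phi R$. Since $\catF$ is closed under submodules and extensions in $\catmod R$, its natural exact structure has as conflations precisely the short exact sequences of $\catmod R$ with all three terms in $\catF$; consequently an \emph{admissible} subobject of $X\in\catF$ is a submodule $L\subseteq X$ with $X/L\in\catF$, and an \emph{admissible} quotient of $X$ is a quotient $X\twoheadrightarrow N$ with $N\in\catF$ (its kernel being then automatically in $\catF$). The two parts are proved differently: (b) is a short deduction from the results already available, while (a) needs a dévissage.

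\emph{Part (b).} The plan is to show that a subcategory $\catX\subseteq\catF$ is a torsion‑free class of the exact category $\catF$ if and only if it is a torsion‑free class of $\catmod R$. ``If'' is immediate, since a torsion‑free class of $\catmod R$ contained in $\catF$ is closed under all submodules (hence under admissible subobjects) and under extensions. For ``only if'', \cref{thma:KE=torf in torf}(a) shows that a torsion‑free class of $\catF$ is KE‑closed in $\catmod R$, and because $\dim R\le 1$, \cref{thma:KE=torf} identifies KE‑closed subcategories of $\catmod R$ with torsion‑free classes of $\catmod R$. Now Takahashi's classification (\cref{fct:Takahashi}) is an inclusion‑preserving bijection $\Psi\mapsto\catmod^{\ass}_\Psi R$ between subsets of $\Spec R$ and torsion‑free classes of $\catmod R$, with inverse $\catG\mapsto\bigcup_{M\in\catG}\Ass M$; under it $\catmod^{\ass}_\Psi R\subseteq\catmod^{\ass}_\Phi R=\catF$ exactly when $\Psi\subseteq\Phi$. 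Hence the torsion‑free classes of $\catF$ are exactly the $\catmod^{\ass}_\Psi R$ with $\Psi\subseteq\Phi$, and $\Psi\mapsto\catmod^{\ass}_\Psi R$ is the asserted bijection with the power set of $\Phi$.

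\emph{Part (a).} That a Serre subcategory of $\catF$ is a torsion class of $\catF$ is immediate, so the content is the converse: a torsion class $\catT$ of $\catF$ is closed under admissible subobjects. Fix a conflation $0\to L\to X\to Y\to 0$ in $\catF$ with $X\in\catT$; then $Y\in\catT$ (admissible quotient of $X$), and we must show $L\in\catT$. For a prime $\pp$ put $\Gamma_\pp(N):=\bigcup_n(0:_N\pp^n)$, so that $\Ass\Gamma_\pp(N)=\Ass N\cap V(\pp)$ and $\Ass(N/\Gamma_\pp N)=\Ass N\setminus V(\pp)$; in particular for $N\in\catF$ both $\Gamma_\pp N$ and $N/\Gamma_\pp N$ stay in $\catF$. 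Applying this with $\pp\in\Ass L$: from $0\to L/\Gamma_\pp L\to X/\Gamma_\pp L\to Y\to 0$ one sees $X/\Gamma_\pp L\in\catF$, so $0\to\Gamma_\pp L\to X\to X/\Gamma_\pp L\to 0$ and $0\to L/\Gamma_\pp L\to X/\Gamma_\pp L\to Y\to 0$ are conflations in $\catF$ whose middle terms lie in $\catT$. When $\Ass L$ is not a single prime one may choose $\pp\in\Ass L$ with $\Ass L\not\subseteq V(\pp)$, so that $\Gamma_\pp L$ and $L/\Gamma_\pp L$ each have strictly fewer associated primes; by induction on $|\Ass L|$ both lie in $\catT$, whence $L\in\catT$ by extension‑closedness. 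This reduces us to the case that $L$ is $\pp$‑coprimary for a single prime $\pp\in\Phi$.

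\emph{The coprimary case} is where I expect the bulk of the work. First, since $\pp\in\Supp X$ and $X\in\catT$, the module $X/\pp X$ is a nonzero $R/\pp$‑module having $\pp$ as a minimal, hence associated, prime; dividing it by its $\pp$‑primary component yields a $\pp$‑coprimary module $Z$ (its only associated prime $\pp$ lies in $\Phi$, so $Z\in\catF$), and $Z$ is an admissible quotient of $X$, so $Z\in\catT$ and $\pp\in\Ass Z$. Next, any two nonzero finite‑length $R_\pp$‑modules admit a nonzero homomorphism (match a simple quotient of one with a simple submodule of the other), so $\Hom_R(Z,L)_\pp=\Hom_{R_\pp}(Z_\pp,L_\pp)\neq 0$ and hence $\Hom_R(Z,L)\neq 0$; a nonzero $g\colon Z\to L$ has image a nonzero $\pp$‑coprimary submodule of $L$ lying in $\catF$, which is an admissible quotient of $Z\in\catT$, hence lies in $\catT$. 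Thus the largest submodule of $L$ belonging to $\catT$ is nonzero and captures a positive part of $L$ at $\pp$. The remaining task is to iterate this, peeling off layers of $L$ whose $\Gamma_\pp$‑decomposition produces a $\pp$‑coprimary piece of strictly smaller length at $\pp$ and a piece of strictly smaller dimension of support, and to conclude $L\in\catT$ by a secondary induction ordered lexicographically on $(\dim\Supp L,\ \ell_{R_\pp}(L_\pp))$. The genuinely delicate point — and the step I expect to require the most care — is keeping every intermediate module inside $\catF$, i.e.\ ensuring that the successive layers and their $\Gamma_\pp$‑pieces do not acquire associated primes outside $\Phi$; this is exactly why one must work throughout with the $\Gamma_\pp$‑functor and with coprimary quotients rather than with arbitrary quotients, and it is the crux of the argument. (Alternatively one may phrase the whole of (a) as the identification $\catT=\catT^{+}\cap\catF$, where $\catT^{+}$ is the torsion class of $\catmod R$ generated by $\catT$ — which is a Serre subcategory by the known coincidence of torsion classes and Serre subcategories in $\catmod R$ — but verifying this equality runs into the same dévissage and the same obstacle.)
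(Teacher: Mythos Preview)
Your proof of part~(b) is correct and is exactly the paper's argument: torsion-free classes of $\catF$ are KE-closed in $\catmod R$ by \cref{prp:KE=torf in torf}, hence torsion-free classes of $\catmod R$ by \cref{KE=torf in 1-dim}, and Takahashi's bijection restricts.

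Part~(a), however, has a genuine gap, and the obstacle you yourself flag is real and unresolved. Your reduction to the case where $L$ is $\pp$-coprimary is fine, and (modulo the garbled sentence about ``dividing by its $\pp$-primary component''---what you want is a $\pp$-coprimary \emph{quotient} of $X/\pp X$, obtained from a primary decomposition, and this is indeed an admissible quotient of $X$ in $\catF$ since kernels of quotients of $X$ inherit $\Ass\subseteq\Ass X\subseteq\Phi$) you do produce a nonzero $L'\subseteq L$ with $L'\in\catT$. But the induction you sketch cannot proceed: once you pass to $L/L'$, its support lies in $V(\pp)$ and hence $\Gamma_\pp(L/L')=L/L'$, so the $\Gamma_\pp$-decomposition gives nothing; meanwhile $\Ass(L/L')$ may contain primes strictly above $\pp$ that lie outside $\Phi$, so neither $L/L'$ nor $X/L'$ need belong to $\catF$, and there is no evident conflation in $\catF$ on which to continue. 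The lexicographic induction on $(\dim\Supp L,\ell_{R_\pp}(L_\pp))$ does not get off the ground, because you have no admissible short exact sequence in which the new ``$L$'' appears.

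The paper avoids this entirely with a one-line argument that you nearly stumble onto in your parenthetical remark. A torsion class $\catT$ of the torsion-free class $\catF$ is, by \cref{prp:char IE}, an IE-closed subcategory of $\catmod R$. Since $\serre(\catmod R)=\tors(\catmod R)$ (Stanley--Wang), one has $\ie(\catmod R)=\torf(\catmod R)$ by \cref{prp:subcat when Serre=tors}(2). Thus $\catT$ is a torsion-free class of $\catmod R$, i.e.\ closed under \emph{all} submodules, hence a fortiori under admissible subobjects in $\catF$; so $\catT$ is Serre in $\catF$. In your language this shows directly that $\catT$ itself is a torsion-free class of $\catmod R$---stronger than the identity $\catT=\catT^{+}\cap\catF$ you were aiming for, and with no d\'evissage needed.
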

Since the Serre subcategories of $\catmod^{\ass}_{\Phi} R$ were classified 
via the upper set of $\Phi$ in \cite[Corollary 5.10]{Sai2} 
(see also \cref{fct:classify Serre in torf}),
we obtain the complete classifications of Serre subcategories and torsion(-free) classes
of the torsion-free class $\catmod^{\ass}_{\Phi} R$,
which are summarized as follows:
\[
\begin{tikzcd}[row sep=8pt,column sep=40pt,ampersand replacement=\&]
\{\text{torsion classes in $\catmod^{\ass}_{\Phi} R$}\} \arr{d,equal} \& \\
\{\text{Serre subcategories in $\catmod^{\ass}_{\Phi} R$}\} \arr{r,"\iso",leftrightarrow} \arr{d,"\subseteq",phantom,sloped} \& \{\text{upper sets of $\Phi$}\} \arr{d,"\subseteq",phantom,sloped} \\
\{\text{torsionfree classes in $\catmod^{\ass}_{\Phi} R$}\} \arr{r,"\iso","\dim R \le 1"',leftrightarrow} \& \{\text{subsets of $\Phi$}\}.
\end{tikzcd}
\]
These results extend the classical classifications of various subcategories 
of the abelian category $\catmod R$ by \cite{Gab,Takahashi,SW} 
to the exact category $\catmod^{\ass}_{\Phi} R$.

\medskip
\noindent
{\bf Conventions.}
For a category $\catC$,
we denote by $\Hom_{\catC}(M,N)$ the set of morphisms between objects $M$ and $N$ in $\catC$.
In this paper,
we suppose that all subcategories are full subcategories closed under isomorphisms.
Thus, we often identify the subcategories with the subsets of the set of isomorphism classes
of objects in $\catC$.
A subcategory of an additive category is said to be \emph{additive}
if it is closed under finite direct sums.
In particular, an additive subcategory always contains zero objects.


Let $R$ be a commutative ring.
We denote by $\Spec R$ the set of prime ideals of $R$
and $\Min R$ (resp. $\Max R$) the set of minimal prime (resp.\ maximal) ideals of $R$.
For any $\pp \in \Spec R$,
we denote by $\kk(\pp):=R_{\pp}/{\pp R_{\pp}}$ the residue field of $R$ at $\pp$.
We denote by $\catmod R$ 
the category of finitely generated (left) $R$-modules.
For $M\in \catmod R$, we denote by $\Ass M$ the set of associated prime ideals of $M$
and $\Supp M$ the support of $M$.

\medskip
\noindent
{\bf Acknowledgement.}
The second author would like to thank Osamu Iyama for giving him 
the opportunity to study KE-closed subcategories.
He would also like to thank Haruhisa Enomoto for sharing his observation
that motivated the content of Section \ref{s:KE=torf in torf}.
He is very grateful to Ryo Takahashi for kindly answering the author's questions
and for his valuable comments.

The first author is supported by JSPS Grant-in-Aid for JSPS Fellows 21J00567.
The second author is supported by JSPS KAKENHI Grant Number JP21J21767.

\section{Preliminaries: several subcategories of an abelian category}\label{s:pre}
In this section,
we briefly  recall some properties of subcategories of an abelian category.
\begin{dfn}\label{dfn:several subcat}
Let $\catA$ be an abelian category and $\catX$ its additive subcategory.
\begin{enua}
\item
$\catX$ is said to be \emph{closed under extensions} (or \emph{extension-closed})
if for any exact sequence $0 \to A \to B \to C \to 0$,
we have that $A,C \in \catX$ implies $B\in \catX$.
\item
$\catX$ is said to be \emph{closed under images} (resp.\ \emph{kernels}, resp.\ \emph{cokernels})
if for any morphism $f \colon X \to Y$ in $\catA$ with $X,Y \in \catX$,
we have that $\Ima f \in \catX$ (resp.\ $\Ker f \in \catX$, resp.\ $\Cok f \in \catX$).
\item
$\catX$ is said to be \emph{closed under subobjects} (resp.\ \emph{quotients})
if for any injection $A \inj X$ (resp.\ surjection $X \surj A$) in $\catA$ such that $X\in \catX$,
we have that $A \in \catX$.
\item
$\catX$ is called a \emph{Serre subcategory}
if it is closed under subobjects, quotients and extensions.
\item
$\catX$ is called a \emph{torsion-free class} (resp.\ \emph{torsion class})
if it is closed under extensions and subobjects (resp.\ extensions and quotients).
\item
$\catX$ is said to be \emph{wide}
if it is closed under kernels, cokernels and extensions.
\item
$\catX$ is said to be \emph{IKE-closed}
if it is closed under images, kernels and extensions.
\item
$\catX$ is said to be \emph{ICE-closed}
if it is closed under images, cokernels and extensions.
\item
$\catX$ is said to be \emph{IE-closed}
if it is closed under images and extensions.
\item
$\catX$ is said to be \emph{KE-closed} 
if it is closed under kernels and extensions.
\item
$\catX$ is said to be \emph{CE-closed} (or \emph{narrow})
if it is closed under cokernels and extensions.
\item
We denote by $\serre \catA$, $\torf \catA$, $\tors \catA$, $\wide \catA$, $\ike \catA$,
$\ice \catA$, $\ie \catA$, $\ke \catA$ and $\ce \catA$,
the set of Serre subcategories, torsion-free classes, torsion classes, wide subcategories,
IKE-closed subcategories, ICE-closed subcategories, IE-closed subcategories,
KE-closed subcategories and CE-closed subcategories of $\catA$, respectively.
\end{enua}
\end{dfn}
It is easy to see that the following implications hold:
\[
\begin{tikzcd}
&\text{Serre subcategories} \arr{ld,Rightarrow} \arr{d,Rightarrow} \arr{rd,Rightarrow}& \\
\text{torsion-free classes} \arr{d,Rightarrow} & \text{wide} \arr{ld,Rightarrow} \arr{rd,Rightarrow} & \text{torsion classes} \arr{d,Rightarrow}\\
\text{IKE-closed} \arr{d,Rightarrow} \arr{rd,Rightarrow} && \text{ICE-closed} \arr{ld,Rightarrow} \arr{d,Rightarrow}\\
\text{KE-closed} \arr{rd,Rightarrow}& \text{IE-closed} \arr{d,Rightarrow} & \text{CE-closed} \arr{ld,Rightarrow}\\
& \text{extension-closed} & .
\end{tikzcd}
\]

\begin{rmk}\label{rmk:closed under DS}
Let $\catA$ be an abelian category and $\catX$ its additive subcategory.
If $\catX$ is closed under images (resp.\ kernels, resp.\ cokernels),
then it is closed under direct summands.
That is, for any $X, Y \in \catA$, we have that 
$X\oplus Y \in \catX$ implies both $X$ and $Y$ belong to $\catX$.
Indeed, consider the following morphisms:
\[
e_{11}:=\begin{bsmatrix} 1 & 0\\ 0 & 0 \end{bsmatrix}, e_{22}:=\begin{bsmatrix} 0 & 0\\ 0 & 1 \end{bsmatrix} \colon X \oplus Y \to X \oplus Y.
\]
Then we have
$X=\Ker(e_{22})=\Ima(e_{11})=\Cok(e_{22})$ and $Y=\Ker(e_{11})=\Ima(e_{22})=\Cok(e_{11})$.
Thus, we have $X, Y \in \catX$ in any case.
\end{rmk}

We often use the following notation about subcategories of an abelian category.
\begin{dfn}\label{dfn:notation subcat}
Let $\catX$ and $\catY$ be subcategories of an abelian category $\catA$.
\begin{enua}
\item
We denote by $\Sub \catX$ the subcategory of $\catA$
consisting of subobjects of objects of $\catX$.
\item
We denote by $\Fac \catX$ the subcategory of $\catA$
consisting of quotients of objects of $\catX$.
\item
We denote by $\catX * \catY$ the subcategory of $\catA$
consisting of $M \in \catA$ such that there is an exact sequence
$0\to X \to M \to Y \to 0$ of $\catA$ for some $X\in\catX$ and $Y\in \catY$.
\item
We define the subcategory $\Filt \catX$ of $\catA$ by
\[
\Filt \catX :=\bigcup_{n\ge 0} \catX^{* n},
\]
where $\catX^{* (n+1)}:=\catX^{* n} * \catX$ and $\catX^{* 0}= \{0\}$.
\item
We define the subcategories of $\catA$ by
$\F(\catX):=\Filt(\Sub \catX)$ and $\T(\catX):=\Filt(\Fac \catX)$.
The former is called the \emph{torsion-free closure} of $\catX$
and the latter is called the \emph{torsion closure} of $\catX$.
\end{enua}
\end{dfn}

\begin{fct}[{see, for example, \cite[Lemma 2.5 and 2.6]{Eno}}]
The following hold for a subcategory $\catX$ of an abelian category $\catA$.
\begin{enua}
\item
$\F(\catX)$ is the smallest torsion-free class of $\catA$ containing $\catX$.
\item
$\T(\catX)$ is the smallest torsion class of $\catA$ containing $\catX$.
\end{enua}
\end{fct}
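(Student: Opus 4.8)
The plan is to prove (1) in detail and to deduce (2) by duality: a torsion-free class of $\catA^{\op}$ is exactly a torsion class of $\catA$, the operation $\Fac$ in $\catA$ becomes $\Sub$ in $\catA^{\op}$, and the operation $\Filt$ is self-dual, since the condition defining $\catX*\catY$ in $\catA$ is the one defining $\catY*\catX$ in $\catA^{\op}$, which does not affect the union $\Filt\catX=\bigcup_{n\ge 0}\catX^{*n}$. Hence $\T(\catX)$ computed in $\catA$ equals $\F(\catX)$ computed in $\catA^{\op}$, so (2) follows by applying (1) to $\catA^{\op}$.

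For (1) I would first record the easy facts. Since $\catX^{*1}=\{0\}*\catX=\catX$ and every object is a subobject of itself, $\catX\subseteq\Sub\catX\subseteq\Filt(\Sub\catX)=\F(\catX)$, so $\F(\catX)$ contains $\catX$. Next, for an \emph{arbitrary} subcategory $\catY$, the subcategory $\Filt\catY$ is closed under extensions: given an exact sequence $0\to A\to B\to C\to 0$ with $A\in\catY^{*a}$ and $C\in\catY^{*c}$, splice a $\catY$-filtration of $A$ with the preimages in $B$ of a $\catY$-filtration of $C$; since the $j$-th preimage $B_j$ satisfies $B_j/B_{j-1}\cong C_j/C_{j-1}$, this exhibits $B\in\catY^{*(a+c)}\subseteq\Filt\catY$. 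In particular $\F(\catX)$ is extension-closed, and it is additive because $0\in\F(\catX)$ and any $X\oplus Y$ fits into an extension of $Y$ by $X$.

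The substantial step is that $\F(\catX)$ is closed under subobjects. I would prove, by induction on the least $n$ with $M\in(\Sub\catX)^{*n}$, that every subobject $N\inj M$ belongs to $\F(\catX)$. For $n\le 1$ this is immediate because $\Sub(\Sub\catX)=\Sub\catX$. For the inductive step, pick an exact sequence $0\to M'\to M\to M''\to 0$ with $M'\in(\Sub\catX)^{*(n-1)}$ and $M''\in\Sub\catX$, and form the pullback $N':=N\times_M M'$; it embeds into $N$ and into $M'$, so $N'\in\F(\catX)$ by the inductive hypothesis, while $N/N'$ embeds into $M/M'\cong M''$ and thus lies in $\Sub\catX\subseteq\F(\catX)$. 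Since $\F(\catX)$ is extension-closed, $0\to N'\to N\to N/N'\to 0$ gives $N\in\F(\catX)$. Hence $\F(\catX)$ is a torsion-free class containing $\catX$. Minimality is routine: if $\catF$ is a torsion-free class with $\catX\subseteq\catF$, then $\Sub\catX\subseteq\Sub\catF=\catF$, and $\catF$ extension-closed forces $\catF^{*n}=\catF$ for all $n\ge 1$, so $\Filt\catF=\catF$; therefore $\F(\catX)=\Filt(\Sub\catX)\subseteq\Filt\catF=\catF$.

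I expect the only delicate point to be the subobject-closedness induction, and within it the two exactness facts invoked: that the pullback of a monomorphism along any morphism is again a monomorphism, and that the induced map $N/N'\to M/M'$ is a monomorphism — the latter being in essence the isomorphism $N/(N\cap M')\cong(N+M')/M'$. Both are standard in any abelian category, so no genuine obstacle arises; everything else is bookkeeping with the $*$-operation, and the passage to $\catA^{\op}$ disposes of (2) for free.
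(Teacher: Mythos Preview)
Your proof is correct. The paper does not give its own proof of this fact---it simply cites \cite[Lemma 2.5 and 2.6]{Eno}---so there is nothing to compare against; your argument is the standard one, with the closure under subobjects handled by the pullback/induction step and (2) obtained from (1) by passing to $\catA^{\op}$.
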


Next, we introduce the notion of Serre subcategories and torsion(-free) classes
of an extension-closed subcategory,
which is one of the main topics of this paper.
\begin{dfn}\label{dfn:subcat in ex cat}
Let $\catX$ be an extension-closed subcategory of an abelian category $\catA$.
\begin{enua}
\item
A \emph{conflation} of $\catX$ is 
an exact sequence $0 \to A \to B \to C \to 0$ of $\catA$
such that $A$, $B$ and $C$ belong to $\catX$.
\item
An additive subcategory $\catS$ of $\catX$ is said to be \emph{closed under conflations}
if for any conflation $0\to X \to Y \to Z \to 0$ in $\catX$,
we have that $X,Z \in \catS$ implies $Y\in \catS$.
\item
An additive subcategory $\catS$ of $\catX$ is said to be \emph{closed under admissible subobjects}
(resp.\ \emph{admissible quotients})
if for any conflation $0\to X \to Y \to Z \to 0$ in $\catX$,
we have that $Y\in\catS$ implies $X\in\catS$ (resp.\ $Z\in \catS$).
\item
A \emph{Serre subcategory of $\catX$} is an additive subcategory of $\catX$
closed under conflations, admissible subobjects, and admissible quotients.
\item
A \emph{torsion-free class of $\catX$} is an additive subcategory of $\catX$
closed under conflations and admissible subobjects.
\item
A \emph{torsion class of $\catX$} is an additive subcategory of $\catX$
closed under conflations and admissible quotients.
\item
We denote by $\serre \catX$, $\torf \catX$ and $\tors \catX$,
the set of Serre subcategories, torsion-free classes and torsion classes of $\catX$, respectively.
\end{enua}
\end{dfn}

We also introduce an appropriate duality on an extension-closed subcategory.
\begin{dfn}\label{dfn:ex dual}
Let $\catX$ be an extension-closed subcategory of an abelian category $\catA$.
An \emph{exact duality} on $\catX$ is an additive equivalence $F \colon \catX^{\op} \to \catX$
with a quasi-inverse $G$ such that both $F$ and $G$ preserve conflations of $\catX$.
That is, for any conflation $0 \to X \to Y \to Z \to 0$ in $\catX$,
the sequences $0 \to FX \to FY \to FZ \to 0$ and $0 \to GX \to GY \to GZ \to 0$
are also conflations of $\catX$.
\end{dfn}

\section{Characterizations of several subcategories via their torsion(-free) closures}\label{s:KE=torf in torf}
In this section,
we characterize several subcategories of an abelian category
in terms of a sequence of extension-closed subcategories.
These results play a central role throughout this paper.

Let us begin with a study of KE-closed subcategories via their torsion-free closures. 
The following proposition means that
\emph{a KE-closed subcategory is a torsion-free class in a torsion-free class}.
\begin{prp}\label{prp:KE=torf in torf}
The following are equivalent for a subcategory $\catX$ of an abelian category $\catA$.
\begin{enur}
\item
$\catX$ is a KE-closed subcategory of $\catA$.
\item
$\catX$ is a torsion-free class of $\F(\catX)$.
\item
There exists a torsion-free class $\catF$ of $\catA$
such that $\catX$ is a torsion-free class of $\catF$.
\end{enur}
\end{prp}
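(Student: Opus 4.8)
The plan is to prove the cycle of implications (iii)$\imply$(i), (i)$\imply$(ii), (ii)$\imply$(iii), of which the second is the substantive one. The implication (ii)$\imply$(iii) is immediate: $\F(\catX)$ is a torsion-free class of $\catA$, so it serves as the required $\catF$. For (iii)$\imply$(i), suppose $\catX$ is a torsion-free class of some torsion-free class $\catF$ of $\catA$. Extension-closedness of $\catX$ in $\catA$ is clear, since a conflation of $\catF$ is just a short exact sequence of $\catA$ with all terms in $\catF$, and $\catF$ being extension-closed means any extension in $\catA$ of objects of $\catX\subseteq\catF$ lands in $\catF$, hence is a conflation of $\catF$ with outer terms in $\catX$, hence has middle term in $\catX$. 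For kernels: given $f\colon X\to Y$ with $X,Y\in\catX$, first observe $\Ker f$ is a subobject of $X\in\catF$, so $\Ker f\in\catF$ because $\catF$ is closed under subobjects in $\catA$; then $0\to\Ker f\to X\to\Ima f\to 0$ is a conflation of $\catF$ (all three terms lie in $\catF$, using again that $\Ima f\subseteq Y\in\catF$), and $\Ima f\subseteq Y\in\catX$ forces $\Ima f\in\catX$ since $\catX$ is closed under admissible subobjects — wait, more carefully: $\Ima f$ is an admissible subobject only if it sits in a conflation of $\catX$; instead use that $0\to\Ima f\to Y\to\Cok f\to 0$ is a conflation of $\catF$ and $\Ima f$ a subobject of $Y\in\catX$, so $\Ima f\in\catX$ by closedness of $\catX$ under admissible subobjects applied to... hmm, this requires $\Cok f\in\catF$, which holds as $\catF$ is closed under quotients? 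No — $\catF$ is a torsion-free class, not necessarily closed under quotients. So instead I use that $\Ima f$ is a subobject of $Y\in\catX$ inside $\catF$: the sequence $0\to\Ima f\to Y\to Y/\Ima f\to 0$ has $Y/\Ima f$ possibly not in $\catF$, so this is not a conflation of $\catF$. The correct route: closedness of a torsion-free class $\catX$ of $\catF$ under admissible subobjects means exactly that whenever $0\to A\to B\to C\to 0$ is a conflation of $\catF$ with $B\in\catX$, then $A\in\catX$. Apply this to $0\to\Ima f\to Y\to\Cok_{\catF}$? The cleanest is: in $\catF$ the object $\Ima f$ embeds in $X$? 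No. I will instead argue that any subobject (in $\catA$) of an object of $\catX$ that happens to lie in $\catF$ — equivalently in $\F(\catX)$ — in fact lies in $\catX$, by Takahashi-style/Enomoto-style reasoning that a torsion-free subclass is closed under subobjects within the ambient torsion-free class; then $\Ker f\subseteq X$ and $\Ker f\in\catF$ give $\Ker f\in\catX$ directly.

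For the heart, (i)$\imply$(ii): assume $\catX$ is KE-closed in $\catA$. I must show $\catX$ is a torsion-free class of the exact category $\F(\catX)=\Filt(\Sub\catX)$. Closedness under conflations is inherited from extension-closedness of $\catX$ in $\catA$ (a conflation of $\F(\catX)$ is a short exact sequence of $\catA$). The real work is closedness under admissible subobjects: given a conflation $0\to A\to B\to C\to 0$ in $\F(\catX)$ with $B\in\catX$, I need $A\in\catX$. The key structural input is that every object of $\F(\catX)=\Filt(\Sub\catX)$ admits a filtration whose subquotients are subobjects of objects of $\catX$, hence — using that $\catX$ is closed under kernels, and a subobject $S\hookrightarrow X_0$ with $X_0\in\catX$ fits in $0\to S\to X_0\to X_0/S$ — one wants to replace "subobject of $\catX$" by something controllable. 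I expect the argument to run: show first that $\Sub\catX\cap(\text{things mapping appropriately})$ behaves well, and more precisely that if $S$ is a subobject of $B\in\catX$ and $S\in\F(\catX)$ then $S\in\catX$. Reduce by induction on the filtration length of $S$ in $\Filt(\Sub\catX)$: if $0\to S'\to S\to S''\to 0$ with $S'\in\F(\catX)$ of shorter length and $S''\in\Sub\catX$, and $S\hookrightarrow B\in\catX$, then $S''$ is a quotient of $S$, hence... this is where it gets delicate because quotients are not controlled.

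I think the right move, and the step I expect to be the main obstacle, is the following lemma: \emph{if $\catX$ is KE-closed and $S\hookrightarrow B$ with $B\in\catX$ and $S\in\Sub\catX$ (say $S\hookrightarrow X_0\in\catX$), then $S\in\catX$.} To see this, form the pullback/intersection: the map $S\to B$ and the map $S\to X_0$; actually consider $S\hookrightarrow B$ and separately $S\hookrightarrow X_0$, take the product map $S\to B\oplus X_0$, which is still a monomorphism, with $B\oplus X_0\in\catX$; this does not obviously help. Alternatively, use the diagonal-type trick: since $S$ is a subobject of $X_0\in\catX$ and also maps to $B\in\catX$, consider $f\colon X_0\to B$? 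There is no such map a priori. The genuine tool must be: $S$ is the kernel of $B\twoheadrightarrow B/S$, but $B/S$ need not be in $\catX$. So instead realize $S$ via a kernel \emph{inside} $\catX$: if we can find $X_1\in\catX$ and a map $X_0\to X_1$ whose kernel is $S$, we are done. Given $S\hookrightarrow X_0$, such $X_1$ exists iff $X_0/S$ embeds in some object of $\catX$, i.e.\ $X_0/S\in\Sub\catX$. This will be arranged by an induction that simultaneously tracks the statement "subobjects of $\catX$ lying in $\F(\catX)$ belong to $\catX$" and "the relevant quotients lie in $\Sub\catX$", using the filtration structure of $\F(\catX)$ together with the snake lemma to propagate conflations through a filtration. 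Concretely: induct on $n$ with $S\in\Sub\catX^{*n}$-type strata; the snake lemma applied to the filtration of $B$ (which lies in $\catX$, so has the trivial length-one filtration) against a filtration of $S$ yields, at each stage, short exact sequences with two terms known to be in $\catX$ and the third a subobject of $\catX$ of lower complexity, and KE-closedness (kernels!) closes the induction. I would carry this out by first proving the base case ($S$ itself a subobject of an object of $\catX$, handled by the kernel trick once $X_0/S\in\Sub\catX$ is established from $C=B/S$ or $B/A$ being a filtration of subobjects), then the inductive step via the snake lemma, and finally assembling: admissible subobjects of $\catX$ in $\F(\catX)$ land in $\catX$, completing (i)$\imply$(ii). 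The main obstacle is exactly controlling the quotient $X_0/S$ — showing it is again a subobject of an object in $\catX$ — and I expect this to be where the filtration/snake-lemma bookkeeping is genuinely needed rather than formal.
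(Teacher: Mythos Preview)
Your cycle and the trivial direction (ii)$\Rightarrow$(iii) match the paper. But both nontrivial directions wander, and for (i)$\Rightarrow$(ii) you are inducting on the wrong object, which creates the very obstacle you flag at the end.

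\medskip
\textbf{On (iii)$\Rightarrow$(i).} You actually had the complete argument and then abandoned it. Once you note that $\Ker f$ and $\Ima f$ are subobjects of $X,Y\in\catF$ and hence lie in $\catF$, the sequence $0\to\Ker f\to X\to\Ima f\to 0$ is a conflation of $\catF$ with middle term $X\in\catX$; closedness of $\catX$ under admissible subobjects gives $\Ker f\in\catX$ immediately. You do not need $\Ima f\in\catX$, and you certainly do not need the general claim that ``any subobject of an object of $\catX$ lying in $\catF$ belongs to $\catX$'' --- that claim is \emph{false} in general, since for an arbitrary subobject $S\hookrightarrow X$ the quotient $X/S$ need not be in $\catF$, so there is no conflation to apply admissible-subobject-closedness to. The paper's proof is exactly the one-line observation above.

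\medskip
\textbf{On (i)$\Rightarrow$(ii).} Here the paper's induction variable is the filtration length of the \emph{quotient} $C$ in $(\Sub\catX)^{*n}$, not of the subobject $A$. This choice is what makes the argument work. In the base case $C\in\Sub\catX$, pick an injection $C\hookrightarrow X'$ with $X'\in\catX$; then $A=\Ker(B\twoheadrightarrow C\hookrightarrow X')$ is the kernel of a morphism between objects of $\catX$, hence $A\in\catX$ by KE-closedness. No control of any auxiliary quotient is needed. For the inductive step one writes $0\to M\to C\to N\to 0$ with $M\in(\Sub\catX)^{*(n-1)}$ and $N\in\Sub\catX$, pulls back along $M\hookrightarrow C$, and applies the induction hypothesis twice (first to the column $0\to Y\to B\to N\to 0$ to get $Y\in\catX$, then to the row $0\to A\to Y\to M\to 0$). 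Your plan --- inducting on the filtration length of $A$ --- forces you to control quotients like $X_0/S$, which is precisely the obstacle you could not resolve; the paper sidesteps it entirely by choosing the other variable.
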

\begin{proof}
(i)$\imply$(ii):
It is easy to see that $\catX$ is closed under conflations in $\F(\catX)$.
We prove the following by the induction on $n$:
\begin{itemize}
\item 
For any conflation $0\to A \to X \to B \to 0$ in $\F(\catX)$ such that $X\in \catX$
and $B\in (\Sub\catX)^{* n}$,
we have $A\in \catX$.
\end{itemize}
Suppose that $n=1$, that is, $B\in \Sub \catX$.
There is an injection $B \inj X'$ to an object $X'$ of $\catX$.
Then we have $A=\Ker\left(X \surj B \inj X'\right)\in\catX$ 
since $\catX$ is closed under kernels in $\catA$.
Next, we suppose that $n>1$.
Then there exists an exact sequence $0 \to M \xr{i} B \to N \to 0$ in $\catA$
such that $M \in (\Sub\catX)^{*(n-1)}$ and $N\in \Sub \catX$.
Pulling back the exact sequence $0\to A \to X \to B \to 0$ by $i$,
we have the following commutative diagram:
\[
\begin{tikzcd}
&&0 \arr{d} & 0 \arr{d} & \\
0 \arr{r} & A \arr{r} \arr{d,equal} & Y \arr{r} \arr{d} \pb & M \arr{r} \arr{d,"i"} &0\\
0 \arr{r} & A \arr{r} & X \arr{r} \arr{d} & B \arr{r} \arr{d} &0\\
&& N \arr{d} \arr{r,equal} & N \arr{d}\\
&& 0 & 0 &.
\end{tikzcd}
\]
Note that every object in this diagram belongs to $\F(\catX)$,
and hence every exact sequence in this diagram is a conflation in $\F(\catX)$.
Then we have $Y\in\catX$ 
by the exact sequence $0 \to Y \to X \to N \to 0$ and the induction hypothesis.
We also have $A \in \catX$
by the exact sequence $0 \to A \to Y \to M \to 0$ and the induction hypothesis.
Therefore, $\catX$ is a torsion-free class of $\F(\catX)$.

(ii)$\imply$(iii):
It is clear.

(iii)$\imply$(i):
It suffices to show that $\catX$ is closed under kernels in $\catA$
since $\catX$ is clearly extension-closed in $\catA$.
Let $f\colon X \to Y$ be a morphism of $\catA$ such that $X,Y\in \catX$.
We have the following two exact sequences in $\catA$:
\[
0\to \Ker f \to X \to \Ima f \to0,\quad
0\to \Ima f \to Y \to \Cok f \to0.
\]
Then we obtain $\Ker f, \Ima f \in \catX$ 
because $X,Y \in \catX \subseteq \catF$ and $\catF$ is a torsion-free class of $\catA$.
Thus, the first exact sequence is a conflation of $\catF$.
This implies $\Ker f \in \catX$ since $\catX$ is a torsion-free class of $\catF$.
\end{proof}

Dually, we obtain the following result,
which means that \emph{a CE-closed subcategory is a torsion class in a torsion class}.
See \cref{prp:subcat when Serre=tors} for its first application.
\begin{prp}\label{prp:CE=tors in tors}
The following are equivalent for a subcategory $\catX$ of an abelian category $\catA$.
\begin{enur}
\item
$\catX$ is a CE-closed subcategory of $\catA$.
\item
$\catX$ is a torsion class of $\T(\catX)$.
\item
There exists a torsion class $\catT$ of $\catA$
such that $\catX$ is a torsion class of $\catT$.
\qed
\end{enur}
\end{prp}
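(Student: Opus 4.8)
The plan is to obtain \cref{prp:CE=tors in tors} by formally dualizing the proof of \cref{prp:KE=torf in torf}, i.e.\ by applying that proposition in the opposite abelian category $\catA^{\op}$. The key observation is that the axioms in \cref{dfn:several subcat,dfn:subcat in ex cat} are set up so that passing to $\catA^{\op}$ interchanges all the relevant notions: a subcategory $\catX$ of $\catA$ is CE-closed in $\catA$ if and only if the corresponding subcategory $\catX^{\op}$ of $\catA^{\op}$ is KE-closed, since cokernels in $\catA$ are kernels in $\catA^{\op}$ and extension-closedness is self-dual (an exact sequence $0\to A\to B\to C\to 0$ in $\catA$ is the same as an exact sequence $0\to C\to B\to A\to 0$ in $\catA^{\op}$). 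Likewise, a torsion class of $\catA$ corresponds to a torsion-free class of $\catA^{\op}$, admissible quotients correspond to admissible subobjects, and the torsion closure operator $\T(-)$ computed in $\catA$ agrees with the torsion-free closure operator $\F(-)$ computed in $\catA^{\op}$, because $\Fac$ in $\catA$ is $\Sub$ in $\catA^{\op}$ and the $*$-operation is symmetric under reversing the roles of sub and quotient.

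First I would spell out, in one or two sentences, the dictionary above, noting that $\catX\mapsto\catX^{\op}$ is a bijection between subcategories of $\catA$ and subcategories of $\catA^{\op}$ that respects isomorphism-closure and additivity. Next I would observe that under this dictionary the three conditions (i), (ii), (iii) of \cref{prp:CE=tors in tors} for $\catX\subseteq\catA$ translate verbatim into the three conditions (i), (ii), (iii) of \cref{prp:KE=torf in torf} for $\catX^{\op}\subseteq\catA^{\op}$: CE-closed becomes KE-closed; ``torsion class of $\T(\catX)$'' becomes ``torsion-free class of $\F(\catX^{\op})$''; and ``there exists a torsion class $\catT$ of $\catA$ such that $\catX$ is a torsion class of $\catT$'' becomes ``there exists a torsion-free class $\catF$ of $\catA^{\op}$ such that $\catX^{\op}$ is a torsion-free class of $\catF$''. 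Then the equivalence of (i), (ii), (iii) in \cref{prp:CE=tors in tors} is immediate from \cref{prp:KE=torf in torf} applied to $\catA^{\op}$.

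The one place that needs a genuine (if routine) check, and which I regard as the main obstacle, is verifying that the exact-categorical notions of \cref{dfn:subcat in ex cat} are compatible with passing to the opposite category — in particular that an extension-closed subcategory $\catX$ of $\catA$, viewed as an exact category, has opposite exact structure equal to that of $\catX^{\op}\subseteq\catA^{\op}$, so that conflations match up and ``closed under conflations / admissible subobjects / admissible quotients'' dualize as claimed. This is standard, but since the paper works with these notions concretely it is worth stating explicitly. Once that compatibility is in hand, no further computation is required: the proof is simply the sentence ``apply \cref{prp:KE=torf in torf} in $\catA^{\op}$''. I would therefore write the proof as: record the duality dictionary, note that it sends the statement of \cref{prp:CE=tors in tors} to the statement of \cref{prp:KE=torf in torf}, and conclude.
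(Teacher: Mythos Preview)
Your proposal is correct and matches the paper's approach exactly: the paper simply writes ``Dually, we obtain the following result'' and appends a \qed, so the intended argument is precisely the formal dualization via $\catA^{\op}$ that you describe. Your explicit dictionary and the remark about the exact structure on $\catX^{\op}$ are more detailed than what the paper records, but they are the routine checks underlying that one-word justification.
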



Next, we describe a similar characterization for IE-closed subcategories.
\begin{prp}\label{prp:char IE}
The following are equivalent for a subcategory $\catX$ of an abelian category $\catA$.
\begin{enur}
\item
$\catX$ is an IE-closed subcategory of $\catA$.
\item
$\catX=\T(\catX) \cap \F(\catX)$ holds.
\item
$\catX$ is a torsion-free class of $\T(\catX)$.
\item
$\catX$ is a torsion class of $\F(\catX)$.
\item
There exists a torsion class $\catT$ of $\catA$
such that $\catX$ is a torsion-free class of $\catT$.
\item
There exists a torsion-free class $\catF$ of $\catA$
such that $\catX$ is a torsion class of $\catF$.
\end{enur}
\end{prp}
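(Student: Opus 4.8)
The plan is to prove \cref{prp:char IE} by combining the KE/CE characterizations of \cref{prp:KE=torf in torf,prp:CE=tors in tors} with the elementary observation that an IE-closed subcategory is exactly a subcategory that is simultaneously KE-closed and CE-closed \emph{inside a suitable exact category}; more precisely, the engine will be that in an exact category arising as a torsion(-free) class, ``torsion-free class'' and ``torsion class'' are the one-sided conditions whose conjunction is a Serre-type/IE-type condition. I would organize the equivalences around the two ``sandwich'' conditions (iii) and (iv), proving a cycle together with (i) and (ii), and then deduce (v) and (vi) as the evident weakenings.

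First I would record the easy implications. (ii)$\imply$(i): if $\catX = \T(\catX)\cap\F(\catX)$, then for $f\colon X\to Y$ with $X,Y\in\catX$, the image $\Ima f$ is a quotient of $X$ and a subobject of $Y$, hence lies in $\Fac\catX\subseteq\T(\catX)$ and in $\Sub\catX\subseteq\F(\catX)$, so $\Ima f\in\catX$; extension-closedness is clear, so $\catX$ is IE-closed. (iii)$\imply$(v) and (iv)$\imply$(vi) are immediate by taking $\catT=\T(\catX)$ and $\catF=\F(\catX)$. For (v)$\imply$(i): given such $\catT$, note $\catX$ is extension-closed in $\catA$, and for $f\colon X\to Y$ in $\catX$ the sequence $0\to\Ima f\to Y\to\Cok f\to 0$ shows $\Ima f\in\catT$ (as $\catT$ is closed under subobjects... no --- $\catT$ is a torsion class, so closed under quotients and extensions; instead use $0\to\Ker f\to X\to\Ima f\to 0$ to get $\Ima f\in\catT$ as a quotient of $X\in\catX\subseteq\catT$), and then $0\to\Ima f\to Y\to\Cok f\to 0$ is a conflation in $\catT$ with $Y\in\catX$ and $\Cok f$ a quotient of $Y$ hence in $\catT$; since $\catX$ is a torsion-free class of $\catT$, closure under admissible subobjects gives $\Ima f\in\catX$. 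The dual argument gives (vi)$\imply$(i).

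The substantive direction is (i)$\imply$(ii), (i)$\imply$(iii), (i)$\imply$(iv). For (i)$\imply$(ii): the inclusion $\catX\subseteq\T(\catX)\cap\F(\catX)$ is trivial. For the reverse, I would run an induction on filtration length mimicking the proof of \cref{prp:KE=torf in torf}, showing: if $M\in(\Fac\catX)^{*m}\cap(\Sub\catX)^{*n}$ then $M\in\catX$. The key point is that an IE-closed subcategory is in particular KE-closed and CE-closed, and on the subcategory $\F(\catX)$ (resp. $\T(\catX)$) the argument of \cref{prp:KE=torf in torf} (resp. its dual) already shows $\catX$ is a torsion-free (resp. torsion) class there; so I can instead deduce (iii) and (iv) first. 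For (i)$\imply$(iii): an IE-closed subcategory is CE-closed, so by \cref{prp:CE=tors in tors} $\catX$ is a torsion class of $\T(\catX)$; I must upgrade this to ``torsion-free class of $\T(\catX)$,'' i.e. show closure under admissible subobjects inside $\T(\catX)$. Given a conflation $0\to A\to X\to B\to 0$ in $\T(\catX)$ with $X\in\catX$, and using $B\in(\Fac\catX)^{*n}$, I would induct on $n$: the base case $B\in\Fac\catX$ provides a surjection $X'\surj B$ with $X'\in\catX$; pulling back $0\to A\to X\to B\to 0$ along it and using that $\catX$ is IE-closed (so that the image of the induced map $X'\to X$ lies in $\catX$, and $A$ is a kernel of a map between the resulting objects of $\catX$) forces $A\in\catX$; the inductive step uses the same $3\times 3$ pullback diagram as in \cref{prp:KE=torf in torf}. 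Dually one gets (i)$\imply$(iv). Finally, to close the loop, (iii)$\imply$(ii): if $\catX$ is a torsion-free class of $\T(\catX)$, then in particular $\catX\subseteq\T(\catX)$, and one shows $\catX = \T(\catX)\cap\F(\catX)$ by checking that an object of $\T(\catX)\cap\F(\catX)$, being a subobject-filtration over $\catX$ inside the exact category $\T(\catX)$, is admissibly filtered and hence lands in $\catX$; alternatively route through (iii)$\imply$(v)$\imply$(i)$\imply$(ii).

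The main obstacle I anticipate is bookkeeping in the pullback induction for (i)$\imply$(iii): one must verify that every object appearing in the $3\times 3$ diagram genuinely lies in the exact category $\T(\catX)$ (so that the rows and columns are honest conflations there), and that the IE-closedness of $\catX$ in $\catA$ can be invoked at the right spots — in particular that the relevant morphisms in the diagram are morphisms between objects of $\catX$ whose images/kernels one is controlling. Once the analogue of the bootstrap in \cref{prp:KE=torf in torf} is set up correctly on both sides, the remaining implications are formal, and (ii) follows by intersecting the two one-sided conclusions.
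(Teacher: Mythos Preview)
Your easy implications are fine, but the ``substantive direction'' contains two genuine errors. First, the claim that an IE-closed subcategory is KE-closed and CE-closed is simply false in $\catA$ (a torsion-free class is IE-closed but typically not CE-closed), so you cannot invoke \cref{prp:KE=torf in torf} or \cref{prp:CE=tors in tors} to conclude that $\catX$ is a torsion-free class of $\F(\catX)$ or a torsion class of $\T(\catX)$; in any case those statements are neither (iii) nor (iv). Second, in your base case for (i)$\imply$(iii) you speak of ``the induced map $X'\to X$'' coming from a surjection $X'\surj B$ and the conflation $0\to A\to X\to B\to 0$; no such map exists (the pullback gives maps $P\to X$ and $P\to X'$, not $X'\to X$), and even if it did, concluding $A\in\catX$ as a \emph{kernel} is exactly what IE-closure does not provide.

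The induction can be salvaged, but you must reorganize it. The one honest input from IE-closure is $\Fac\catX\cap\Sub\catX\subseteq\catX$ (via $X'\surj M\hookrightarrow X$). For (i)$\imply$(iii), induct on the $\Fac\catX$-filtration length of $A$ (not of $B$): given $A\subseteq X$ with $X\in\catX$ and $0\to A'\to A\to A''\to 0$ with $A''\in\Fac\catX$, first get $A'\in\catX$ by induction (since $A'\subseteq X$), then pull back along some $X''\surj A''$ to obtain $0\to A'\to P\to X''\to 0$ with $P\surj A$; extension-closure gives $P\in\catX$, whence $A\in\Fac\catX\cap\Sub\catX\subseteq\catX$. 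The dual pushout argument gives (i)$\imply$(iv), and then (ii) follows. By contrast, the paper's route is shorter: it cites \cite[Theorem~2.7]{Eno} for (i)$\imply$(ii), and then (ii)$\imply$(iv) (and dually (iii)) is a one-line observation using only that $\T(\catX)$ is closed under quotients, with no induction at all. Your approach, once corrected, has the advantage of being self-contained; the paper's has the advantage of isolating the real content in (ii).
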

\begin{proof}
(i)$\imply$(ii):
It is nothing but \cite[Theorem 2.7]{Eno}.

(ii)$\imply$(iv):
Let $0\to A\to X \to B \to 0$ be a conflation in $\F(\catX)$
such that $X \in \catX$.
We have $B \in \T(\catX)$ since $X \in \catX \subseteq \T(\catX)$
and $\T(\catX)$ is a torsion class of $\catA$.
Thus, we have $B \in \T(\catX)\cap\F(\catX)=\catX$,
and we conclude that $\catX$ is a torsion class of $\F(\catX)$.

(iv)$\imply$(vi):
It is clear.

(vi)$\imply$(i):
It suffices to show that $\catX$ is closed under images in $\catA$
since it is clearly extension-closed in $\catA$.
Let $f\colon X \to Y$ be a morphism of $\catA$
such that $X, Y\in \catX$.
Then we have the following two exact sequences in $\catA$:
\[
0\to \Ker f \to X \to \Ima f \to0,\quad
0\to \Ima f \to Y \to \Cok f \to0.
\]
Then we obtain $\Ker f, \Ima f \in \catF$
because $X,Y \in \catX \subseteq \catF$ and $\catF$ is a torsion-free class of $\catA$.
Thus, the first exact sequence is a conflation of $\catF$.
This implies $\Ima f \in \catX$ since $\catX$ is a torsion class of $\catF$.

(ii)$\imply$(iii)$\imply$(v)$\imply$(i):
We omit the proof since it is the dual of (ii)$\imply$(iv)$\imply$(vi)$\imply$(i). 
\end{proof}

As immediate corollaries of the above results, we have the following.
\begin{cor}\label{prp:IKE = Serre in torf}
The following are equivalent for a subcategory $\catX$ of an abelian category $\catA$.
\begin{enur}
\item
$\catX$ is an IKE-closed subcategory of $\catA$.
\item
$\catX$ is a Serre subcategory of $\F(\catX)$.
\item
There exists a torsion-free class $\catF$ of $\catA$
such that $\catX$ is a Serre subcategory of $\catF$.
\qed
\end{enur}
\end{cor}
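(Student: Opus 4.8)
The statement to prove is \cref{prp:IKE = Serre in torf}, characterizing IKE-closed subcategories as Serre subcategories of their torsion-free closures. Since this is labeled as an immediate corollary of the preceding results, the plan is to combine \cref{prp:KE=torf in torf} (KE-closed $\equi$ torsion-free class of $\F(\catX)$) with \cref{prp:char IE} (IE-closed $\equi$ torsion class of $\F(\catX)$), observing that IKE-closed means simultaneously KE-closed and IE-closed, while a Serre subcategory of an extension-closed subcategory means simultaneously a torsion-free class and a torsion class of it (closed under conflations, admissible subobjects, \emph{and} admissible quotients).

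First I would prove (i)$\imply$(ii). Assume $\catX$ is IKE-closed in $\catA$. Then $\catX$ is in particular KE-closed, so by the implication (i)$\imply$(ii) of \cref{prp:KE=torf in torf}, $\catX$ is a torsion-free class of $\F(\catX)$; that is, $\catX$ is closed under conflations and admissible subobjects in $\F(\catX)$. On the other hand, $\catX$ is also IE-closed, so by the implication (i)$\imply$(iv) of \cref{prp:char IE}, $\catX$ is a torsion class of $\F(\catX)$; that is, $\catX$ is closed under conflations and admissible quotients in $\F(\catX)$. Putting these together, $\catX$ is closed under conflations, admissible subobjects, and admissible quotients in $\F(\catX)$, which is precisely the definition of a Serre subcategory of $\F(\catX)$ (see \cref{dfn:subcat in ex cat}(4)). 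The implication (ii)$\imply$(iii) is immediate since $\F(\catX)$ is itself a torsion-free class of $\catA$.

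Finally I would prove (iii)$\imply$(i). Suppose $\catF$ is a torsion-free class of $\catA$ with $\catX$ a Serre subcategory of $\catF$. A Serre subcategory of $\catF$ is in particular a torsion-free class of $\catF$, so by (iii)$\imply$(i) of \cref{prp:KE=torf in torf}, $\catX$ is KE-closed in $\catA$. Likewise, a Serre subcategory of $\catF$ is in particular a torsion class of $\catF$, so by (vi)$\imply$(i) of \cref{prp:char IE}, $\catX$ is IE-closed in $\catA$. Being both KE-closed and IE-closed, $\catX$ is closed under kernels, images, and extensions, i.e.\ IKE-closed. This completes the cycle. There is no real obstacle here: the only point requiring a moment's care is checking that ``Serre subcategory of $\catF$'' unpacks exactly as the conjunction ``torsion-free class of $\catF$'' and ``torsion class of $\catF$'', which is immediate from \cref{dfn:subcat in ex cat}, and that ``IKE-closed'' unpacks as ``KE-closed and IE-closed'', which follows since closure under images plus closure under kernels is the same data as closure under images and kernels. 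Hence the corollary drops out formally from the two preceding propositions.
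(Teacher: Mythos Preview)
Your proof is correct and is exactly the formal combination the paper intends: the corollary carries only a \qed, signaling that it drops out immediately from \cref{prp:KE=torf in torf} and \cref{prp:char IE} via the observation that ``IKE-closed $=$ KE-closed $+$ IE-closed'' and ``Serre in $\catF$ $=$ torsion-free in $\catF$ $+$ torsion in $\catF$''.
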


\begin{cor}
The following are equivalent for a subcategory $\catX$ of an abelian category $\catA$.
\begin{enur}
\item
$\catX$ is an ICE-closed subcategory of $\catA$.
\item
$\catX$ is a Serre subcategory of $\T(\catX)$.
\item
There exists a torsion class $\catT$ of $\catA$
such that $\catX$ is a Serre subcategory of $\catT$.
\qed
\end{enur}
\end{cor}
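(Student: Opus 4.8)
The plan is to deduce this from the two characterizations already in hand, using the elementary observation that a Serre subcategory of an exact category is precisely a subcategory that is simultaneously a torsion class and a torsion-free class of that exact category. Comparing the clauses of \cref{dfn:subcat in ex cat}, an additive subcategory $\catS$ of an extension-closed subcategory $\catX$ is a Serre subcategory of $\catX$ if and only if it is both a torsion class of $\catX$ and a torsion-free class of $\catX$, since being closed under conflations, admissible subobjects and admissible quotients is exactly the conjunction of (conflations and admissible subobjects) with (conflations and admissible quotients). On the abelian level there is an analogous trivial remark: $\catX$ is ICE-closed if and only if $\catX$ is both CE-closed and IE-closed, because closure under images, cokernels and extensions is the conjunction of closure under images and extensions with closure under cokernels and extensions.

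With these remarks, the equivalences follow formally. For (i)$\imply$(ii): if $\catX$ is ICE-closed, then it is CE-closed, so $\catX$ is a torsion class of $\T(\catX)$ by \cref{prp:CE=tors in tors}; and it is IE-closed, so $\catX$ is a torsion-free class of $\T(\catX)$ by the equivalence of (i) and (iii) in \cref{prp:char IE}. Hence $\catX$ is a Serre subcategory of $\T(\catX)$. The implication (ii)$\imply$(iii) is immediate on taking $\catT := \T(\catX)$, which is a torsion class of $\catA$. For (iii)$\imply$(i): if $\catX$ is a Serre subcategory of some torsion class $\catT$ of $\catA$, then $\catX$ is in particular a torsion class of $\catT$, hence CE-closed by \cref{prp:CE=tors in tors}, and also a torsion-free class of $\catT$, hence IE-closed by the equivalence of (i) and (v) in \cref{prp:char IE}; therefore $\catX$ is ICE-closed.

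I expect no real obstacle here: the statement is the exact dual of \cref{prp:IKE = Serre in torf}, and both are formal corollaries of \cref{prp:CE=tors in tors,prp:char IE} (respectively of their abelian-category duals). The only point needing a moment's attention is to keep track of the two distinct meanings of ``torsion class'' and ``torsion-free class'' in play — one intrinsic to the abelian category $\catA$, one intrinsic to the exact (extension-closed) subcategory $\T(\catX)$ — which is precisely the translation that \cref{prp:CE=tors in tors,prp:char IE} provide. Alternatively, one could simply dualize the argument for \cref{prp:IKE = Serre in torf} verbatim, replacing $\Sub$ by $\Fac$, kernels by cokernels, and $\F(-)$ by $\T(-)$ throughout.
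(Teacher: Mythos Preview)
Your argument is correct and is precisely one of the two natural ways to unpack the paper's bare \qed: the paper treats this corollary as an immediate consequence of \cref{prp:CE=tors in tors,prp:char IE} (or, equivalently, as the formal dual of \cref{prp:IKE = Serre in torf}), and your proof makes explicit exactly that deduction via the decomposition ICE $=$ CE $+$ IE and Serre $=$ torsion class $+$ torsion-free class.
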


We often use the following easy observation.
\begin{prp}\label{prp:torf in Serre}
Let $\catA$ be an abelian category and $\catS$ its Serre subcategory.
Then every torsion(-free) class of $\catS$ is also a torsion(-free) class of $\catA$.
\end{prp}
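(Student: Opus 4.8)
The statement to prove is \cref{prp:torf in Serre}: if $\catS$ is a Serre subcategory of an abelian category $\catA$, then every torsion(-free) class of $\catS$ (in the sense of exact categories, \cref{dfn:subcat in ex cat}) is again a torsion(-free) class of $\catA$. By symmetry (or by passing to $\catA^{\op}$, which sends Serre subcategories to Serre subcategories and torsion-free classes of $\catS$ to torsion classes of $\catS^{\op}$), it suffices to treat the torsion-free case. So let $\catF$ be a torsion-free class of $\catS$; I must show $\catF$ is closed under extensions and subobjects in $\catA$.

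**Key steps.** First I would record the crucial feature of a Serre subcategory: it is closed under subobjects, quotients, and extensions, so in particular \emph{every} short exact sequence $0 \to A \to B \to C \to 0$ of $\catA$ with $B \in \catS$ has $A, C \in \catS$ automatically — hence is a conflation of $\catS$. Likewise any short exact sequence with $A, C \in \catS$ has $B \in \catS$ and is a conflation. This means the exact structure on $\catS$ inherited as an extension-closed subcategory of $\catA$ is "saturated" in the sense that admissible monos/epis into or out of objects of $\catS$ are just ordinary monos/epis of $\catA$. Now for extension-closedness of $\catF$ in $\catA$: take $0 \to A \to B \to C \to 0$ in $\catA$ with $A, C \in \catF \subseteq \catS$; then $B \in \catS$ since $\catS$ is extension-closed, so this is a conflation of $\catS$, and closure of $\catF$ under conflations gives $B \in \catF$. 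For closure under subobjects in $\catA$: take a monomorphism $A \inj X$ in $\catA$ with $X \in \catF \subseteq \catS$; then $A \in \catS$ (subobject of an object of $\catS$) and the quotient $X/A \in \catS$ (quotient of an object of $\catS$), so $0 \to A \to X \to X/A \to 0$ is a conflation of $\catS$ with middle term in $\catF$; closure of $\catF$ under admissible subobjects then yields $A \in \catF$. Finally I would note $\catF$ is additive in $\catA$ because it is additive in $\catS$ and $\catS$ is closed under finite direct sums in $\catA$.

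**Main obstacle.** There is essentially no obstacle here — the proof is a direct unwinding of definitions, and the only thing to be careful about is the bookkeeping that "conflation of $\catS$" coincides with "short exact sequence of $\catA$ all of whose terms lie in $\catS$," which is immediate from the Serre condition. The one point worth stating explicitly (to avoid a subtle gap) is that an arbitrary mono $A \inj X$ with $X \in \catS$ is automatically an admissible mono of the exact category $\catS$, precisely because $A$ and $X/A$ both land in $\catS$; without the Serre hypothesis (e.g.\ if $\catS$ were only extension-closed) this would fail, and that is exactly where the hypothesis is used. I would present the argument in three short displays for the three closure properties (extensions, subobjects, and the dual for torsion classes), invoking \cref{dfn:subcat in ex cat} and \cref{dfn:several subcat} throughout.
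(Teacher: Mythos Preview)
Your proposal is correct and is precisely the straightforward verification the paper has in mind; indeed the paper omits the proof entirely with the remark that it is straightforward, and your argument (promoting short exact sequences of $\catA$ with appropriate terms in $\catS$ to conflations of $\catS$ via the Serre property, then invoking the closure conditions of $\catF$) is exactly the intended unwinding of definitions.
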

\begin{proof}
We omit the proof since it is straightforward.
\end{proof}

The results of this section can be summarized as follows:
\begin{align*}
\text{KE-closed} \quad &= \quad \text{torsion-free in torsion-free},\quad
\text{CE-closed} \quad = \quad \text{torsion in torsion},\\
\text{IKE-closed} \quad &= \quad \text{Serre in torsion-free},\quad
\text{ICE-closed} \quad = \quad \text{Serre in torsion},\\
\text{IE-closed} \quad &= \quad \text{torsion in torsion-free} \quad = \quad \text{torsion-free in torsion}.
\end{align*}

\section{Classifying KE-closed subcategories}\label{s:classify KE}
Let $R$ be a commutative noetherian ring.
In this section, we classify KE-closed subcategories of $\catmod R$
when $\dim R \le 1$ (\S \ref{ss:dim 1}) and 
when $R$ is a two-dimensional normal domain (\S \ref{ss:dim 2}).
In the former case, we prove that $\ke(\catmod R)=\torf(\catmod R)$ holds
(see \cref{KE=torf in 1-dim}).
For this, we establish some methods to study KE-closed subcategories of $\catmod R$.

Since KE-closed subcategories and torsion-free classes are closely related 
by \cref{prp:KE=torf in torf},
we first recall Takahashi's classification of the torsion-free classes of $\catmod R$.
Consider the following assignments:
\begin{itemize}
\item
For a subset $\Phi$ of $\Spec R$,
define a subcategory $\catmod^{\ass}_{\Phi} R$ of $\catmod R$ by
\[
\catmod^{\ass}_{\Phi} R:=\{M\in \catmod R \mid \Ass M\subseteq \Phi\}.
\]
\item
For a subcategory $\catX$ of $\catmod R$,
define a subset $\Ass \catX$ of $\Spec R$ by
\[
\Ass\catX :=\bigcup_{M\in\catX} \Ass M.
\]
\end{itemize}
Let $\Phi$ be a subset of $\Spec R$.
A subset $Z$ of $\Phi$ is called an \emph{upper set} of $\Phi$
if for any $\pp,\qq \in \Phi$ with $\pp \subseteq \qq$, 
we have that $\pp\in Z$ implies $\qq \in Z$.
We denote by $\upper \Phi$ the set of upper set of $\Phi$.

\begin{fct}[{\cite[Theorem 4.1]{Takahashi}}]\label{fct:Takahashi}
Let $R$ be a commutative noetherian ring.
\begin{enua}
\item
The assignments $\catX \mapsto \Ass \catX$ and $\Phi \mapsto \catmod^{\ass}_{\Phi} R$ 
give rise to mutually inverse bijections between 
$\torf(\catmod R)$ and the power set $\pow(\Spec R)$ of $\Spec R$.
\item
For a subset $\Phi$ of $\Spec R$,
it is an upper set of $\Spec R$
if and only if $\catmod^{\ass}_{\Phi} R$ is a Serre subcategory of $\catmod R$.
Thus, the bijection in $\mathrm{(1)}$ restricts to a bijection between $\serre(\catmod R)$ and $\upper(\Spec R)$.
\end{enua}
\end{fct}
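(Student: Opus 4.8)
The plan is to prove part~(1) by checking directly that the two displayed assignments are well defined and mutually inverse, and to deduce part~(2) by determining exactly when $\catmod^{\ass}_{\Phi}R$ is closed under quotients. I would use only standard properties of associated primes over a noetherian ring: that $\Ass(M\oplus N)=\Ass M\cup\Ass N$; that $\Ass N\subseteq\Ass M$ for a submodule $N\subseteq M$ while $\Ass B\subseteq\Ass A\cup\Ass C$ for a short exact sequence $0\to A\to B\to C\to 0$; that $\Ass(R/\pp)=\{\pp\}$; that every $M\in\catmod R$ admits a primary decomposition $0=\bigcap_{\pp\in\Ass M}N_{\pp}$ with $\Ass(M/N_{\pp})=\{\pp\}$; and that $\Supp M=\bigcup_{\pp\in\Ass M}V(\pp)$.

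From the first two facts, $\catmod^{\ass}_{\Phi}R$ is an additive subcategory of $\catmod R$ closed under subobjects and extensions, i.e.\ a torsion-free class, for every $\Phi\subseteq\Spec R$; and $\Ass(\catmod^{\ass}_{\Phi}R)=\Phi$, the inclusion $\subseteq$ being immediate and $\supseteq$ holding because $R/\pp\in\catmod^{\ass}_{\Phi}R$ has $\pp$ as an associated prime. Since $\catX\subseteq\catmod^{\ass}_{\Ass\catX}R$ is merely the definition of $\Ass\catX$, part~(1) reduces to the reverse inclusion, which is the heart of the matter and the main obstacle: \emph{if $\catX$ is a torsion-free class and $\Ass M\subseteq\Ass\catX$, then $M\in\catX$.} The difficulty is that a torsion-free class is closed under subobjects but not under quotients, so a prime filtration of $M$ does not help --- its subquotients $R/\qq$ may involve primes $\qq\notin\Ass M$ for which $R/\qq$ need not belong to $\catX$.

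To establish the claim I would proceed as follows. For each $\pp\in\Ass M\subseteq\Ass\catX$ there is $X\in\catX$ with $\pp\in\Ass X$, hence an embedding $R/\pp\inj X$, and so $R/\pp\in\catX$ by closure under subobjects. A primary decomposition of $M$ yields an embedding $M\inj\bigoplus_{\pp\in\Ass M}M/N_{\pp}$; since $\catX$ is additive and closed under subobjects, it suffices to show each coprimary module $M/N_{\pp}$ lies in $\catX$, which reduces us to the case $\Ass M=\{\pp\}$ with $R/\pp\in\catX$. In that case $\Supp M=V(\pp)$, so $\pp^{k}M=0$ for some $k\ge1$, and I would use the finite filtration by the submodules $M_{j}:=\{m\in M\mid\pp^{j}m=0\}$, running from $M_{0}=0$ to $M_{k}=M$. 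A short computation --- using that every element of $R\setminus\pp$ is a non-zero-divisor on $M$ --- shows that each subquotient $M_{j}/M_{j-1}$ is annihilated by $\pp$ and is torsion-free as a module over the domain $R/\pp$; hence it embeds into a finite direct sum of copies of $R/\pp$ and therefore lies in $\catX$. As $\catX$ is extension-closed, $M\in\catX$, completing part~(1).

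For part~(2) I would show that $\catmod^{\ass}_{\Phi}R$, being already a torsion-free class, is a Serre subcategory if and only if it is closed under quotients, if and only if $\Phi$ is an upper set of $\Spec R$. Indeed, if $\Phi$ is an upper set and $M\surj N$ with $\Ass M\subseteq\Phi$, then any $\qq\in\Ass N\subseteq\Supp N\subseteq\Supp M$ contains some $\pp\in\Ass M\subseteq\Phi$, so $\qq\in\Phi$ and $N\in\catmod^{\ass}_{\Phi}R$. Conversely, if $\catmod^{\ass}_{\Phi}R$ is closed under quotients and $\pp\subseteq\qq$ with $\pp\in\Phi$, then $R/\pp\in\catmod^{\ass}_{\Phi}R$ surjects onto $R/\qq$, forcing $\qq\in\Ass(R/\qq)\subseteq\Phi$. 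Together with part~(1) this yields the asserted restriction of the bijection to Serre subcategories and upper sets.
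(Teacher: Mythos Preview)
The paper does not prove this statement---it is recorded as a Fact cited from \cite[Theorem 4.1]{Takahashi}---so there is no in-paper proof to compare against directly. Your argument is correct. The only step that deserves a word of justification is ``$\Supp M=V(\pp)$, so $\pp^{k}M=0$'': this holds because $V(\ann M)=\Supp M=V(\pp)$ forces $\sqrt{\ann M}=\pp$, and then noetherianity of $R$ gives $\pp^{k}\subseteq\ann M$ for some $k$. Everything else is routine.

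It is worth noting, though, that the paper's \cref{prp:surj base change}(1) establishes a strict generalization of the key inclusion $\catmod^{\ass}_{\{\pp\}}R\subseteq\catX$ (there $\catX$ is only assumed extension-closed, not closed under subobjects), using the method of Takahashi's original proof rather than yours. Instead of ascending through the $\pp$-torsion filtration $0=M_{0}\subseteq M_{1}\subseteq\cdots\subseteq M_{k}=M$ and embedding each graded piece into a free $R/\pp$-module, that argument builds a \emph{descending} chain $M=M_{0}\supseteq M_{1}\supseteq\cdots$ with $M_{i}$ the kernel of the universal map $M_{i-1}\to(R/\pp)^{n_{i}}$, and shows it must reach zero because $M_{\pp}$ has finite length. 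Your filtration approach is cleaner and more direct, but it uses closure under subobjects in an essential way (to place a submodule of $(R/\pp)^{n}$ into $\catX$); the descending-chain method needs only that $\catX$ is extension-closed, which is why the paper adopts it for the broader statement.
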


The following examples and facts will be used frequently throughout this paper.
\begin{ex}\label{ex:cm R is torf}
Let $R$ be a commutative noetherian ring.
\begin{enua}
\item
For a subcategory $\catX$ of $\catmod R$,
we have $\F(\catX)=\catmod^{\ass}_{\Ass \catX} R$ 
(see, for example, \cite[Corollary 3.23]{Sai2}).
Thus, $\catX$ is KE-closed if and only if 
it is a torsion-free class of $\catmod^{\ass}_{\Ass \catX} R$ by \cref{prp:KE=torf in torf}.
\item
If $R$ is Cohen-Macaulay, we have that $\Ass (\cm R) = \Ass R = \Min R$
(see, for example, \cite[Propositions A.12 and A.21]{Sai2}).
Moreover, if $\dim R \le 1$, then $\cm R$ is a torsion-free class of $\catmod R$.
Thus, we have $\cm R = \catmod^{\ass}_{\Min R} R$.
\end{enua}
\end{ex}

\begin{fct}[{\cite[Lemma 2.4 (2)]{IMST}}]\label{fct:KE-closed imply Hom-ideal}
Let $R$ be a commutative noetherian ring 
and $\catX$ a KE-closed subcategory of $\catmod R$.
Then, for any $X\in \catX$ and $M\in \catmod R$,
the $R$-module $\Hom_R(M,X)$ belongs to $\catX$.
\end{fct}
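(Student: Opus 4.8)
The plan is to exhibit $\Hom_R(M,X)$ as the kernel of a morphism between objects of $\catX$, and then invoke closure under kernels. First I would use that $R$ is noetherian and $M$ is finitely generated to produce a \emph{finite} free presentation of $M$: choosing a finite generating set gives a surjection $R^{\oplus n}\twoheadrightarrow M$, its kernel is a submodule of $R^{\oplus n}$ and hence finitely generated by noetherianity, so one obtains an exact sequence
\[
R^{\oplus m}\xrightarrow{\;f\;}R^{\oplus n}\longrightarrow M\longrightarrow 0
\]
in $\catmod R$ with $m,n$ finite.

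Next I would apply the left-exact contravariant functor $\Hom_R(-,X)$ to this presentation. Using the natural isomorphisms $\Hom_R(R^{\oplus k},X)\iso X^{\oplus k}$, this yields an exact sequence
\[
0\longrightarrow \Hom_R(M,X)\longrightarrow X^{\oplus n}\xrightarrow{\;\Hom_R(f,X)\;} X^{\oplus m}
\]
in $\catmod R$. Since $\catX$ is an additive subcategory it is closed under finite direct sums, so both $X^{\oplus n}$ and $X^{\oplus m}$ lie in $\catX$. Therefore $\Hom_R(M,X)=\Ker\bigl(\Hom_R(f,X)\bigr)$ is the kernel of a morphism between objects of $\catX$, and closure of $\catX$ under kernels forces $\Hom_R(M,X)\in\catX$. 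Note that extension-closedness plays no role here; only the "K" half of KE-closedness (together with additivity) is used.

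There is essentially no serious obstacle in this argument. The only points deserving care are that noetherianity of $R$ is precisely what allows the presentation of $M$ to be taken finite — so that applying $\Hom_R(-,X)$ produces \emph{finite} direct sums of copies of $X$, which belong to the additive subcategory $\catX$ — and that, under the conventions of this paper, "additive subcategory" includes closure under finite direct sums (hence also contains the zero object, covering the degenerate case $M=0$).
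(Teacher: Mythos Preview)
Your proof is correct and is exactly the standard argument: the paper does not supply its own proof of this Fact (it simply cites \cite[Lemma 2.4 (2)]{IMST}), but the very same free-presentation trick you use is employed verbatim later in the paper in the proof of \cref{center}, so your approach aligns with the authors' methods.
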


\begin{fct} \label{ass hom}
Let $R$ be a commutative noetherian ring and let $M,N \in \catmod R$.
\begin{enua}
\item
$\Ass_R(\Hom_R(M,N))=\Supp M \cap \Ass N$.
\item
Suppose that $R$ is local.
Then $\depth \Hom_R(M,N) \ge \min\{2, \depth N\}$.
\end{enua}
\end{fct}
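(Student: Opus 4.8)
The plan is to deduce both parts from a single observation about how the left exact functor $\Hom_R(M,-)$ interacts with depth, together with two standard facts valid for $M$ finitely generated over the noetherian ring $R$: localization commutes with $\Hom$, that is $\Hom_R(M,N)_\pp \iso \Hom_{R_\pp}(M_\pp,N_\pp)$, and over a local ring $(R,\mm)$ one has $\mm\in\Ass X \iff \depth X=0$. I will not reprove these; a reference such as Bruns--Herzog or Matsumura suffices. Throughout I use the convention $\depth 0=+\infty$, which makes both assertions vacuously true whenever $\Hom_R(M,N)=0$.

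The key lemma to isolate first is: \emph{if $x\in R$ is a nonzerodivisor on $N$, then $x$ is a nonzerodivisor on $\Hom_R(M,N)$ for every $M$, and moreover $\Hom_R(M,N)/x\Hom_R(M,N)$ embeds into $\Hom_R(M,N/xN)$.} Both statements follow at once by applying $\Hom_R(M,-)$ to the short exact sequence $0\to N \xr{x} N \to N/xN \to 0$ and using left exactness (the cokernel of multiplication by $x$ on $\Hom_R(M,N)$ is $\Hom_R(M,N)/x\Hom_R(M,N)$, which maps into $\Hom_R(M,N/xN)$).

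For part (1): since $\Ass$, $\Supp$, and $\Hom$ are all compatible with localization (for finitely generated modules), it suffices to show, for $(R,\mm)$ local, that $\depth\Hom_R(M,N)=0$ if and only if $M\ne 0$ and $\depth N=0$. If $M=0$ then $\Hom_R(M,N)=0$; if $\depth N\ge 1$ then the key lemma produces an element of $\mm$ that is a nonzerodivisor on $\Hom_R(M,N)$, so $\depth\Hom_R(M,N)\ge 1$; in either case $\depth\Hom_R(M,N)\ne 0$. Conversely, if $M\ne 0$ and $\depth N=0$, choose an injection $R/\mm\inj N$ witnessing $\mm\in\Ass N$ and a surjection $M\surj R/\mm$ (Nakayama); the composite is a nonzero element of $\Hom_R(M,N)$ annihilated by $\mm$, so $\mm\in\Ass\Hom_R(M,N)$ and $\depth\Hom_R(M,N)=0$. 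Unwinding the localization then gives $\pp\in\Ass_R\Hom_R(M,N)\iff M_\pp\ne 0$ and $\pp R_\pp\in\Ass N_\pp \iff \pp\in\Supp M\cap\Ass N$.

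For part (2): we may assume $\depth N\ge 1$, as otherwise $\min\{2,\depth N\}=0$ and there is nothing to prove. Pick an $N$-regular element $x\in\mm$; by the key lemma $x$ is a nonzerodivisor on $\Hom_R(M,N)$, so $\depth\Hom_R(M,N)\ge 1$, settling the case $\depth N=1$. If $\depth N\ge 2$, extend $x$ to an $N$-regular sequence $x,y\in\mm$; then $y$ is a nonzerodivisor on $N/xN$, hence on $\Hom_R(M,N/xN)$ by the key lemma, hence on its submodule $\Hom_R(M,N)/x\Hom_R(M,N)$. Thus $x,y$ is a $\Hom_R(M,N)$-regular sequence and $\depth\Hom_R(M,N)\ge 2=\min\{2,\depth N\}$. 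The argument is routine; the only points requiring care are the degenerate cases where $\Hom_R(M,N)$ vanishes (absorbed by the convention $\depth 0=+\infty$) and checking that the reduction in (1) is legitimate, i.e. that $\Ass$, $\Supp$ and $\Hom$ genuinely localize as claimed for finitely generated modules over a noetherian ring.
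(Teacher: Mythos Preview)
Your proof is correct. The paper itself does not give a proof of this fact: it simply cites Bourbaki for (1) and says ``the same proof as in \cite[Lemma 3.1]{BD} works'' for (2). You have supplied a direct, self-contained argument in the spirit of those references. Your key lemma (that a nonzerodivisor on $N$ is a nonzerodivisor on $\Hom_R(M,N)$, with the quotient embedding into $\Hom_R(M,N/xN)$) is exactly the engine behind the standard proof of (2), and your localization-plus-depth reduction for (1) is one of the usual routes to the Bourbaki result. The degenerate cases (vanishing $\Hom$, $\depth 0=+\infty$) are handled carefully. Nothing is missing.
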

\begin{proof}
See \cite[Chapter IV, \S 2.1, Proposition 10]{Bourbaki CA} for (1).
The same proof as in \cite[Lemma 3.1]{BD} works for (2). 
\end{proof}

\begin{fct}[{\cite[Lemma 2.27]{Goto-Watanabe}}]\label{fct:Goto-Watanabe}
Let $R$ be a commutative noetherian ring and $M \in \catmod R$.
Consider a decomposition $\Ass M = \Phi \sqcup \Psi$ as a set.
Then there exists an exact sequence $0\to L \to M\to N\to 0$
such that $\Ass L=\Phi$ and $\Ass N=\Psi$.
\end{fct}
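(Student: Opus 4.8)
The plan is to obtain $L$ and $N$ from a primary decomposition of the zero submodule of $M$. Since $R$ is noetherian and $M$ is finitely generated, the Lasker--Noether theorem provides an irredundant primary decomposition $0 = Q_1 \cap \cdots \cap Q_n$ inside $M$, where each $Q_i \subsetneq M$ is $\pp_i$-primary (so that $\Ass(M/Q_i) = \{\pp_i\}$), the primes $\pp_1, \dots, \pp_n$ are pairwise distinct, and $\{\pp_1, \dots, \pp_n\} = \Ass M$. After renumbering I may assume $\pp_1, \dots, \pp_k \in \Phi$ and $\pp_{k+1}, \dots, \pp_n \in \Psi$.

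Next I would set $L := Q_{k+1} \cap \cdots \cap Q_n$ and $N := M/L$, giving an exact sequence $0 \to L \to M \to N \to 0$. Both $L$ and $N$ embed into direct sums of the modules $M/Q_i$: the canonical map $N = M/\bigcap_{i>k} Q_i \inj \bigoplus_{i>k} M/Q_i$ is injective, and since $L \cap (Q_1 \cap \cdots \cap Q_k) = Q_1 \cap \cdots \cap Q_n = 0$, so is the composite $L \inj M/(Q_1 \cap \cdots \cap Q_k) \inj \bigoplus_{i \le k} M/Q_i$. Because $\Ass(M/Q_i) = \{\pp_i\}$, this yields $\Ass N \subseteq \{\pp_{k+1}, \dots, \pp_n\} = \Psi$ and $\Ass L \subseteq \{\pp_1, \dots, \pp_k\} = \Phi$.

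To finish I would invoke the standard inclusion $\Ass M \subseteq \Ass L \cup \Ass N$ attached to the above short exact sequence. Then $\Phi \sqcup \Psi = \Ass M \subseteq \Ass L \cup \Ass N \subseteq \Phi \cup \Psi$, and since $\Phi \cap \Psi = \emptyset$ this forces $\Phi \subseteq \Ass L$ and $\Psi \subseteq \Ass N$; combined with the inclusions of the previous step we obtain $\Ass L = \Phi$ and $\Ass N = \Psi$, as required.

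The argument is routine once primary decomposition is available, so there is no serious obstacle. The one conceptual point worth flagging is that $\Ass(M/L)$ need not be contained in $\Ass M$ for an arbitrary submodule $L$, so the tempting choice of $L$ as a maximal submodule with $\Ass L \subseteq \Phi$ does not obviously succeed (indeed the class of submodules with associated primes inside $\Phi$ need not be closed under sums). Passing through the primary components sidesteps this difficulty, since there both $L$ and $M/L$ sit inside direct sums of the $M/Q_i$, whose associated primes are completely controlled. The input used---the existence of an irredundant primary decomposition of $0$ in a finitely generated module over a noetherian ring, together with the identification of the radicals of its primary components with $\Ass M$---is entirely classical.
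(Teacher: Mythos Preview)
Your proof via primary decomposition is correct. It is, however, a genuinely different route from the paper's argument, and amusingly your closing caveat flags as ``not obviously successful'' precisely the approach the paper takes: the paper chooses $L$ to be a maximal submodule of $M$ with $\Ass L\subseteq\Phi$ (such a maximal element exists since $M$ is noetherian) and then argues directly that $\Ass(M/L)\subseteq\Psi$ and $\Ass(M/L)\subseteq\Ass M$. The key step is that for any $\pp\in\Ass(M/L)$ one finds $L'\supsetneq L$ with $L'/L\cong R/\pp$; maximality of $L$ forces $\pp\notin\Phi$ and also $\Ass L\subsetneq\Ass L'\subseteq\Ass L\cup\{\pp\}$, whence $\pp\in\Ass L'\subseteq\Ass M$. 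Your worry that $\Ass(M/L)$ need not lie in $\Ass M$ for an arbitrary $L$ is legitimate, but the paper shows this particular maximal $L$ does have that property. The trade-off: your argument is more explicit and invokes the Lasker--Noether machinery as a black box, while the paper's is more self-contained, using only the elementary behavior of $\Ass$ under submodules and extensions together with noetherianness to guarantee a maximal element.
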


\begin{proof}
Since \cite{Goto-Watanabe} is written in Japanese,
we write down the proof explicitly.
We may assume that both $\Phi$ and $\Psi$ are nonempty.
There is a maximal element $L$ of 
the set of submodules $X$ of $M$ such that $\Ass X \subseteq \Phi$
because $R$ is noetherian.
We want to prove that $\Ass L = \Phi$ and $\Ass (M/L)=\Psi$.
Take $\pp \in \Ass(M/L)$.
There is a submodule $L'$ of $M$ containing $L$
such that $L'/L \iso R/\pp$.
In other words, we have the following exact sequence:
\begin{equation}\label{eq:Goto-Watanabe}
0 \to L \to L' \to R/\pp \to 0.
\end{equation}
If $\pp \in \Phi$, then $\Ass L'\subseteq \Ass L \cup \{\pp\} \subseteq \Phi$.
This contradicts the maximality of $L$,
and we have that $\Ass(M/L) \cap \Phi = \emptyset$.
This means $\Ass(M/L) \subseteq \Psi$.
For the same reason, we have that $\Ass L \subsetneq \Ass L'$.
Thus $\Ass L'=\Ass L \cup \{\pp\}$
since $\Ass L \subsetneq \Ass L' \subseteq \Ass L \cup \{\pp\}$,
In particular, we have $\pp \in \Ass L' \subseteq \Ass M$.
This implies $\Ass(M/L) \subseteq \Ass M$.
Therefore, we have $\Ass M =\Ass L \cup \Ass(M/L)$.
Combining this with $\Ass L \subseteq \Phi$ and $\Ass(M/L)\subseteq \Phi$,
we obtain $\Ass L = \Phi$ and $\Ass(M/L) = \Phi$.
\end{proof}

Let us study when KE-closed subcategories are torsion-free classes.
We will give the prime-idealwise criterion for KE-closed subcategories
to be torsion-free classes in \cref{prp:KE=torf prime-idealwise}.
\begin{lem}\label{prp:mod^ass gen}
Let $R$ be a commutative noetherian ring 
and $\catX$ an extension-closed subcategory of $\catmod R$.
Set $\Phi := \Ass \catX$.
Then the following are equivalent:
\begin{enur}
\item
$\catX$ is a torsion-free class of $\catmod R$.
\item
$\catX = \catmod^{\ass}_{\Phi} R$.
\item
$\catX \supseteq \catmod^{\ass}_{\Phi} R$.
\item
$\catX \supseteq \catmod^{\ass}_{\{\pp\}} R$
for any $\pp \in \Phi$.
\end{enur}
\end{lem}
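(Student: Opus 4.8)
The equivalences (i)$\Leftrightarrow$(ii) and (ii)$\Rightarrow$(iii)$\Rightarrow$(iv) are either immediate from \cref{fct:Takahashi} or trivial set-theoretic implications. Concretely, for (i)$\Rightarrow$(ii): if $\catX$ is a torsion-free class, then by \cref{fct:Takahashi}(1) we have $\catX = \catmod^{\ass}_{\Ass\catX} R = \catmod^{\ass}_{\Phi} R$; and (ii)$\Rightarrow$(i) holds since every $\catmod^{\ass}_{\Phi} R$ is a torsion-free class. The implication (ii)$\Rightarrow$(iii) is the identity, and (iii)$\Rightarrow$(iv) follows since $\catmod^{\ass}_{\{\pp\}} R \subseteq \catmod^{\ass}_{\Phi} R$ whenever $\pp \in \Phi$. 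So the only substantive work is (iv)$\Rightarrow$(iii), or equivalently closing the loop back to (ii).

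The heart of the argument is thus: assuming $\catmod^{\ass}_{\{\pp\}} R \subseteq \catX$ for every $\pp \in \Phi$, deduce $\catmod^{\ass}_{\Phi} R \subseteq \catX$. The plan is to take an arbitrary $M \in \catmod^{\ass}_{\Phi} R$, so $\Ass M \subseteq \Phi$, and build $M$ up from pieces each of which lies in $\catX$ by the hypothesis, using that $\catX$ is extension-closed. The natural tool is \cref{fct:Goto-Watanabe}: it lets us split off, for any partition $\Ass M = \Phi' \sqcup \Psi'$, a short exact sequence $0 \to L \to M \to N \to 0$ with $\Ass L = \Phi'$ and $\Ass N = \Psi'$. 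Since $\Ass M$ is a \emph{finite} set (as $M$ is finitely generated over a noetherian ring), we can enumerate $\Ass M = \{\pp_1, \dots, \pp_n\}$ and peel off prime-by-prime: iterating \cref{fct:Goto-Watanabe}, we obtain a finite filtration $0 = M_0 \subseteq M_1 \subseteq \cdots \subseteq M_n = M$ with each subquotient $M_i/M_{i-1}$ satisfying $\Ass(M_i/M_{i-1}) = \{\pp_i\}$, hence $M_i/M_{i-1} \in \catmod^{\ass}_{\{\pp_i\}} R \subseteq \catX$. Then repeated use of the extension-closure of $\catX$ (walking up the filtration via the exact sequences $0 \to M_{i-1} \to M_i \to M_i/M_{i-1} \to 0$) gives $M = M_n \in \catX$. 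This shows $\catmod^{\ass}_{\Phi} R \subseteq \catX$, and combined with the always-true inclusion $\catX \subseteq \F(\catX) = \catmod^{\ass}_{\Ass\catX} R = \catmod^{\ass}_{\Phi} R$ (from \cref{ex:cm R is torf}(1)), we conclude $\catX = \catmod^{\ass}_{\Phi} R$, i.e.\ (ii).

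The main obstacle, such as it is, is purely bookkeeping: making sure \cref{fct:Goto-Watanabe} can genuinely be iterated to produce a filtration with \emph{singleton} associated-prime subquotients, rather than just a single splitting. This is fine because at each stage one applies the Fact to the module $M/M_{i-1}$ with the partition $\{\pp_i\} \sqcup \{\pp_{i+1}, \dots, \pp_n\}$ of its associated primes (noting $\Ass(M/M_{i-1}) = \{\pp_i, \dots, \pp_n\}$, which one verifies along the way), and defines $M_i$ as the preimage in $M$ of the submodule $L$ produced; induction on $n$ handles it cleanly. No deep input beyond \cref{fct:Takahashi}, \cref{fct:Goto-Watanabe}, \cref{ex:cm R is torf}(1), and finiteness of $\Ass M$ is needed.
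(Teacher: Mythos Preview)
Your proof is correct and follows exactly the approach the paper intends: the paper's own proof is the single sentence ``It easily follows from \cref{fct:Goto-Watanabe},'' and you have supplied precisely the details behind that sentence---iterating \cref{fct:Goto-Watanabe} over the finitely many primes in $\Ass M$ to obtain a filtration with singleton-associated-prime subquotients, then climbing the filtration using extension-closure. Your use of \cref{fct:Takahashi} for (i)$\Leftrightarrow$(ii) is the natural way to handle those implications and is implicitly available in the paper as well.
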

\begin{proof}
It easily follows from \cref{fct:Goto-Watanabe}.
\end{proof}

\begin{lem}\label{prp:restrict dense torf}
Let $R$ be a commutative noetherian ring
and $\catX$ a KE-closed subcategory of $\catmod R$.
Let $\Phi$ be a subset of $\Ass \catX$.
Then $\catX\cap \catmod^{\ass}_{\Phi} R$ is a KE-closed subcategory of $\catmod R$
such that $\Ass\left(\catX\cap \catmod^{\ass}_{\Phi} R\right)=\Phi$
(or equivalently, $\F\left(\catX\cap \catmod^{\ass}_{\Phi} R\right)=\catmod^{\ass}_{\Phi} R$).
\end{lem}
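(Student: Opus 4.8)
The plan is to treat the two assertions of the lemma separately: that $\catX\cap\catmod^{\ass}_{\Phi}R$ is KE-closed, and that its set of associated primes is exactly $\Phi$. The parenthetical reformulation will then be a formal consequence, since $\F(\catY)=\catmod^{\ass}_{\Ass\catY}R$ for every subcategory $\catY$ by \cref{ex:cm R is torf}(1) and $\Phi'\mapsto\catmod^{\ass}_{\Phi'}R$ is injective by \cref{fct:Takahashi}(1).

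\textbf{KE-closedness.} First I would record that $\catmod^{\ass}_{\Phi}R$ is a torsion-free class of $\catmod R$ by \cref{fct:Takahashi}(1), hence closed under subobjects and extensions; as a kernel is a subobject, $\catmod^{\ass}_{\Phi}R$ is in particular KE-closed. Since an intersection of KE-closed additive subcategories is again KE-closed — for a morphism $f$ between objects of the intersection, $\Ker f$ and any extension of two such objects lie in each factor separately — it follows at once that $\catX\cap\catmod^{\ass}_{\Phi}R\in\ke(\catmod R)$. (One can also deduce this from \cref{prp:KE=torf in torf} by observing that both $\catX$ and $\catmod^{\ass}_{\Phi}R$ are torsion-free classes of $\catmod^{\ass}_{\Ass\catX}R$, but the direct argument is shorter.)

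\textbf{The associated primes, and the main obstacle.} The inclusion $\Ass(\catX\cap\catmod^{\ass}_{\Phi}R)\subseteq\Phi$ is immediate. For the reverse inclusion, fix $\pp\in\Phi$; since $\Phi\subseteq\Ass\catX$, choose $X\in\catX$ with $\pp\in\Ass X$. Applying \cref{fct:Goto-Watanabe} to the decomposition $\Ass X=\{\pp\}\sqcup(\Ass X\setminus\{\pp\})$ produces an exact sequence $0\to L\to X\to N\to 0$ with $\Ass L=\{\pp\}$ and $\Ass N=\Ass X\setminus\{\pp\}$. Now by \cref{prp:KE=torf in torf} together with \cref{ex:cm R is torf}(1), $\catX$ is a torsion-free class of $\F(\catX)=\catmod^{\ass}_{\Ass\catX}R$; since $\Ass L,\Ass N\subseteq\Ass X\subseteq\Ass\catX$ and $X\in\catX\subseteq\F(\catX)$, the displayed sequence is a conflation of $\F(\catX)$, so closure of $\catX$ under admissible subobjects gives $L\in\catX$. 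As $\Ass L=\{\pp\}\subseteq\Phi$ we also have $L\in\catmod^{\ass}_{\Phi}R$, hence $L\in\catX\cap\catmod^{\ass}_{\Phi}R$ with $\pp\in\Ass L$, proving $\pp\in\Ass(\catX\cap\catmod^{\ass}_{\Phi}R)$. The one point that genuinely requires care is that $\catX$, being only KE-closed, need not be closed under subobjects in $\catmod R$, so $L\in\catX$ cannot be read off directly; it is essential to pass through the exact-category description in \cref{prp:KE=torf in torf}, and mere closure of $\catX$ under kernels in $\catmod R$ would not suffice because $N$ need not belong to $\catX$. Everything else is routine.
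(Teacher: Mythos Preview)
Your proof is correct and follows essentially the same approach as the paper: both use that the intersection of KE-closed subcategories is KE-closed, then for the reverse inclusion $\Phi\subseteq\Ass(\catX\cap\catmod^{\ass}_{\Phi}R)$ apply \cref{fct:Goto-Watanabe} to produce $L$ with $\Ass L=\{\pp\}$ and conclude $L\in\catX$ via \cref{prp:KE=torf in torf}. Your explicit remark about why mere kernel-closure would not suffice is a nice addition but does not change the substance.
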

\begin{proof}
The subcategory $\catX\cap \catmod^{\ass}_{\Phi} R$ is KE-closed in $\catmod R$
since it is the intersection of two KE-closed subcategories.
We prove $\Ass\left(\catX\cap \catmod^{\ass}_{\Phi} R\right)=\Phi$.
It is clear that $\Ass\left(\catX\cap \catmod^{\ass}_{\Phi} R\right) \subseteq \Phi$.
Take $\pp \in \Phi$.
Then there exists $M \in \catX$ such that $\pp \in \Ass M$
since $\Phi \subseteq \Ass \catX$.
By \cref{fct:Goto-Watanabe}, we obtain an exact sequence
$0\to L \to M\to N\to 0$ in $\catmod R$
such that $\Ass L=\{\pp\}$ and $\Ass N=\Ass M \setminus \{\pp\}$.
This exact sequence is a conflation of $\F(\catX)=\catmod^{\ass}_{\Ass \catX} R$
since $\Ass L, \Ass N \subseteq \Ass M \subseteq \Ass \catX$.
Because $\catX$ is a torsion-free class of $\F(\catX)$ by \cref{prp:KE=torf in torf},
we have that $L\in \catX$, and hence $L \in \catX \cap \catmod^{\ass}_{\Phi} R$.
Thus, we have $\pp \in \Ass L \subseteq \Ass\left(\catX\cap \catmod^{\ass}_{\Phi} R\right)$,
and hence $\Phi \subseteq \Ass\left(\catX\cap \catmod^{\ass}_{\Phi} R\right)$.
This finishes the proof.
\end{proof}

Let $R$ be a commutative noetherian ring and $\Phi$ a subset of $\Spec R$.
We define
\[
\ke(R;\Phi):=\{\catX \in \ke(\catmod R) \mid \Ass \catX = \Phi\}.
\]
Note that $\ke(R;\Phi) \supseteq \{\catmod^{\ass}_{\Phi} R\}$ always holds.

\begin{lem}\label{prp:ke ass max ideal}
Let $R$ be a commutative noetherian ring.
Then $\ke(R;\{\mm\})=\{\catmod^{\ass}_{\{\mm\}}R\}$ holds
for any $\mm \in \Max R$.
\end{lem}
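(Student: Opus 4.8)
The plan is to show directly that every $\catX \in \ke(R;\{\mm\})$ coincides with $\catmod^{\ass}_{\{\mm\}}R$, by establishing the two inclusions separately. One of them is formal: by the definition of $\Ass\catX$, each $X \in \catX$ satisfies $\Ass X \subseteq \Ass\catX = \{\mm\}$, so $X \in \catmod^{\ass}_{\{\mm\}}R$; thus $\catX \subseteq \catmod^{\ass}_{\{\mm\}}R$. All the work lies in the reverse inclusion $\catmod^{\ass}_{\{\mm\}}R \subseteq \catX$.

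My first step is to reduce this reverse inclusion to the single assertion $R/\mm \in \catX$. Since $\mm$ is maximal, any $L$ with $\Ass L \subseteq \{\mm\}$ has $\Supp L \subseteq \{\mm\}$, hence is of finite length, and every composition factor of $L$ is isomorphic to $R/\mm$ (the unique simple module supported at $\mm$). So $L$ admits a finite filtration with all subquotients $\iso R/\mm$, and an induction on the length of $L$ — using only that $\catX$ is extension-closed and contains $R/\mm$ — gives $L \in \catX$ (with $L = 0$ trivial, $\catX$ being additive). Hence $\catmod^{\ass}_{\{\mm\}}R \subseteq \catX$ once $R/\mm \in \catX$ is known.

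The key step is therefore to place $R/\mm$ inside $\catX$. Because $\Ass\catX = \{\mm\}$, there is some $M \in \catX$ with $\mm \in \Ass M$. By \cref{fct:KE-closed imply Hom-ideal}, the $R$-module $\Hom_R(R/\mm, M)$ belongs to $\catX$. It is annihilated by $\mm$, so it is a finite-dimensional $\kk(\mm)$-vector space, hence isomorphic as an $R$-module to $(R/\mm)^n$ for some $n$; moreover $n \ge 1$, since $\mm \in \Ass M$ forces $\Hom_R(R/\mm, M) \ne 0$ (alternatively, \cref{ass hom}(1) gives $\Ass \Hom_R(R/\mm, M) = \Supp(R/\mm) \cap \Ass M = \{\mm\}$). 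Finally $\catX$ is closed under kernels, hence under direct summands by \cref{rmk:closed under DS}, so $R/\mm \in \catX$, which completes the proof.

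The only point requiring care — and the only place where the maximality of $\mm$ is essential — is the filtration argument of the second paragraph: it is precisely the finiteness of length of the modules in $\catmod^{\ass}_{\{\mm\}}R$ that lets extension-closedness alone, with no closure under subobjects, propagate membership from $R/\mm$ to the whole subcategory. Everything else is a routine application of the two facts about KE-closed subcategories recalled above.
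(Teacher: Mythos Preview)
Your proof is correct, but it follows a genuinely different route from the paper's. The paper argues abstractly: since $\{\mm\}$ is an upper set of $\Spec R$, the torsion-free closure $\F(\catX)=\catmod^{\ass}_{\{\mm\}}R$ is a Serre subcategory of $\catmod R$ (\cref{fct:Takahashi}(2)); then \cref{prp:KE=torf in torf} says $\catX$ is a torsion-free class of $\F(\catX)$, and \cref{prp:torf in Serre} upgrades this to a torsion-free class of $\catmod R$ itself, whence $\catX=\catmod^{\ass}_{\{\mm\}}R$ by Takahashi's classification. Your argument bypasses all of Section~\ref{s:KE=torf in torf} and Takahashi's theorem, working instead with the concrete fact that modules supported at a single maximal ideal have finite length: you extract $R/\mm\in\catX$ via \cref{fct:KE-closed imply Hom-ideal} and closure under direct summands, then build everything else by extensions. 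The paper's approach is more conceptual and illustrates the structural characterization of KE-closed subcategories; yours is more elementary and self-contained, needing only \cref{rmk:closed under DS} and \cref{fct:KE-closed imply Hom-ideal}. Both generalize equally easily to the case $\Ass\catX\subseteq\Max R$ treated in the paper's proof.
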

\begin{proof}
Let $\catX$ be a KE-closed subcategory of $\catmod R$ 
such that $\Ass\catX \subseteq \Max R$.
Then $\F(\catX)=\catmod^{\ass}_{\Ass\catX} R$ is a Serre subcategory of $\catmod R$
since $\Ass\catX$ is an upper set of $\Spec R$.
From this, $\catX$ is a torsion-free class of $\catmod R$ 
by \cref{prp:torf in Serre,prp:KE=torf in torf},
and hence $\catX=\catmod^{\ass}_{\Ass\catX}R$.
In particular,
we have $\ke(R; \{\mm\})=\{\catmod^{\ass}_{\{\mm\}}R\}$
for any $\mm \in \Max R$.
\end{proof}

\begin{prp}\label{prp:KE=torf prime-idealwise}
Let $R$ be a commutative noetherian ring
and $\Phi$ a subset of $\Spec R$.
If $\ke(R;\{\pp\}) = \{\catmod^{\ass}_{\{\pp\}} R\}$ holds for any $\pp \in \Phi$,
then $\ke(R;\Phi) = \{\catmod^{\ass}_{\Phi} R\}$ also holds.
\end{prp}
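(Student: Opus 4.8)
The plan is to take an arbitrary $\catX \in \ke(R;\Phi)$ and show $\catX = \catmod^{\ass}_{\Phi} R$; since the reverse containment of the sets $\ke(R;\Phi) \supseteq \{\catmod^{\ass}_{\Phi}R\}$ is automatic, this suffices. By \cref{prp:mod^ass gen} (applied with the role of $\Phi$ played by $\Ass\catX = \Phi$), it is enough to prove $\catX \supseteq \catmod^{\ass}_{\{\pp\}} R$ for every $\pp \in \Phi$. So fix $\pp \in \Phi$.

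First I would apply \cref{prp:restrict dense torf} with the singleton $\{\pp\} \subseteq \Phi = \Ass\catX$: the subcategory $\catX' := \catX \cap \catmod^{\ass}_{\{\pp\}} R$ is again a KE-closed subcategory of $\catmod R$, and it satisfies $\Ass\catX' = \{\pp\}$. In other words, $\catX' \in \ke(R;\{\pp\})$. By the hypothesis of the proposition, $\ke(R;\{\pp\}) = \{\catmod^{\ass}_{\{\pp\}} R\}$, so $\catX' = \catmod^{\ass}_{\{\pp\}} R$. But $\catX' \subseteq \catX$ by construction, whence $\catmod^{\ass}_{\{\pp\}} R \subseteq \catX$. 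As $\pp \in \Phi$ was arbitrary, condition (iv) of \cref{prp:mod^ass gen} holds for $\catX$ (note $\catX$ is in particular extension-closed with $\Ass\catX = \Phi$), and therefore $\catX = \catmod^{\ass}_{\Phi} R$ by the equivalence (i)--(iv) there.

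The argument is essentially a bookkeeping reduction, and there is no genuine obstacle: all the content is already packaged in \cref{prp:restrict dense torf} (which isolates the associated-prime support $\{\pp\}$ while staying KE-closed) and in \cref{prp:mod^ass gen} (which upgrades the prime-idealwise containments to the full equality, via \cref{fct:Goto-Watanabe}). The only point to be careful about is that \cref{prp:restrict dense torf} requires $\{\pp\} \subseteq \Ass\catX$, which holds precisely because $\pp \in \Phi = \Ass\catX$, and that the intersection of two KE-closed subcategories is KE-closed (closure under kernels and extensions is plainly preserved by intersection). With those noted, the proof is complete.
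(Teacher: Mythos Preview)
Your proof is correct and follows essentially the same route as the paper: intersect $\catX$ with $\catmod^{\ass}_{\{\pp\}}R$, invoke \cref{prp:restrict dense torf} to see this intersection lies in $\ke(R;\{\pp\})$, apply the hypothesis to conclude $\catmod^{\ass}_{\{\pp\}}R \subseteq \catX$, and finish with \cref{prp:mod^ass gen}. The paper's version is simply more compressed.
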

\begin{proof}
Let $\catX$ be a KE-closed subcategory of $\catmod R$ such that $\Ass \catF=\Phi$.
Then we have $\catX\supseteq \catX \cap \catmod^{\ass}_{\{\pp\}}R =\catmod^{\ass}_{\{\pp\}} R$
for any $\pp \in \Phi$ by the assumption and \cref{prp:restrict dense torf}.
Thus $\catX$ is also a torsion-free class of $\catmod R$ by \cref{prp:mod^ass gen}.
\end{proof}

\begin{cor}\label{prp:char KE = torf}
The following are equivalent for a commutative noetherian ring $R$.
\begin{enur}
\item
$\ke(\catmod R)=\torf(\catmod R)$ holds.
\item
$\ke(R;\{\pp\})= \{ \catmod^{\ass}_{\{\pp\}}R \}$ holds for any $\pp \in \Spec R$.
\item
$\ke(R;\{\pp\})= \{ \catmod^{\ass}_{\{\pp\}}R \}$ holds for any $\pp \in \Spec R\setminus \Max R$.
\end{enur}
\end{cor}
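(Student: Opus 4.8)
The plan is to reduce everything to the prime-idealwise statements already established. First I would record the partition
\[
\ke(\catmod R) = \bigsqcup_{\Phi \subseteq \Spec R} \ke(R;\Phi),
\]
which holds because every KE-closed subcategory $\catX$ has a well-defined associated subset $\Ass \catX \subseteq \Spec R$. By \cref{fct:Takahashi}, $\torf(\catmod R)$ is precisely $\{\catmod^{\ass}_{\Phi} R \mid \Phi \subseteq \Spec R\}$, and $\catmod^{\ass}_{\Phi} R$ lies in $\ke(R;\Phi)$. Since every torsion-free class is KE-closed, one always has $\torf(\catmod R) \subseteq \ke(\catmod R)$, and more precisely $\{\catmod^{\ass}_{\Phi} R\} \subseteq \ke(R;\Phi)$ for each $\Phi$. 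Consequently, condition (i) is equivalent to the assertion that $\ke(R;\Phi) = \{\catmod^{\ass}_{\Phi} R\}$ holds for \emph{every} $\Phi \subseteq \Spec R$.

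With this reformulation in hand, (i)$\imply$(ii) is immediate: specializing $\Phi$ to singletons $\{\pp\}$ gives exactly (ii). Conversely, for (ii)$\imply$(i), assuming (ii) I would apply \cref{prp:KE=torf prime-idealwise} to an arbitrary $\Phi \subseteq \Spec R$ to obtain $\ke(R;\Phi) = \{\catmod^{\ass}_{\Phi} R\}$, which is the reformulated form of (i).

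The implication (ii)$\imply$(iii) is trivial since (iii) is a restriction of (ii). For (iii)$\imply$(ii), the only primes not already covered are the maximal ones, and for those \cref{prp:ke ass max ideal} gives $\ke(R;\{\mm\}) = \{\catmod^{\ass}_{\{\mm\}} R\}$ for every $\mm \in \Max R$ unconditionally; hence (iii) already yields the full statement (ii).

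I do not expect any genuine obstacle: the corollary is a formal bookkeeping consequence of \cref{prp:KE=torf prime-idealwise}, \cref{prp:ke ass max ideal}, and Takahashi's classification. The only point requiring a little care is the translation in the first paragraph of the global equality $\ke(\catmod R) = \torf(\catmod R)$ into the $\Phi$-indexed equalities, which rests on the fact that $\Ass$ induces a bijection between $\torf(\catmod R)$ and $\pow(\Spec R)$.
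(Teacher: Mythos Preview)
Your proof is correct and follows essentially the same approach as the paper: the paper proves (ii)$\imply$(i) via \cref{prp:KE=torf prime-idealwise} and (iii)$\imply$(ii) via \cref{prp:ke ass max ideal}, declaring (i)$\imply$(ii)$\imply$(iii) ``clear.'' Your additional paragraph spelling out the partition $\ke(\catmod R)=\bigsqcup_{\Phi}\ke(R;\Phi)$ and the reformulation of (i) is a welcome unpacking of what the paper leaves implicit, but it does not constitute a different route.
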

\begin{proof}
(i)$\imply$(ii)$\imply$(iii):
It is clear.

(ii)$\imply$(i):
It follows from \cref{prp:KE=torf prime-idealwise}.

(iii)$\imply$(ii):
It follows from \cref{prp:ke ass max ideal}.
\end{proof}

As an immediate corollary, we obtain a classification of KE-closed subcategories of $\catmod R$
when $R$ is a commutative artinian ring.
\begin{cor}\label{prp:KE=torf when dim 0}
If $R$ is a commutative artinian ring,
then we have $\ke(\catmod R)=\torf(\catmod R)$.
\end{cor}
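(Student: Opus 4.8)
The plan is to deduce this immediately from \cref{prp:char KE = torf}. That corollary reduces the equality $\ke(\catmod R)=\torf(\catmod R)$ to a prime-idealwise statement; in particular, its condition (iii) asks only that $\ke(R;\{\pp\})=\{\catmod^{\ass}_{\{\pp\}}R\}$ holds for every $\pp \in \Spec R \setminus \Max R$. So the strategy is simply to check when this set of primes is empty.

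The only ring-theoretic input I would use is the classical fact that a commutative artinian ring has Krull dimension zero, i.e.\ every prime ideal is maximal. Hence $\Spec R = \Max R$, so $\Spec R \setminus \Max R = \emptyset$, and condition (iii) of \cref{prp:char KE = torf} is vacuously satisfied. Applying the implication (iii)$\imply$(i) of that corollary then yields $\ke(\catmod R)=\torf(\catmod R)$, as desired.

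Alternatively, and just as quickly, one can invoke \cref{prp:ke ass max ideal} directly: for each $\mm \in \Max R = \Spec R$ it gives $\ke(R;\{\mm\})=\{\catmod^{\ass}_{\{\mm\}}R\}$, which is exactly condition (ii) of \cref{prp:char KE = torf}, whence (i). I anticipate no real obstacle here: once the structural results of \S\ref{s:classify KE} are available, the statement is genuinely a one-line corollary, the dimension-zero observation being the sole (and entirely standard) ingredient.
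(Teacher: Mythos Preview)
Your proposal is correct and matches the paper's proof essentially verbatim: the paper also invokes \cref{prp:char KE = torf}(iii) together with the observation $\Spec R = \Max R$ for an artinian ring. The alternative you mention via \cref{prp:ke ass max ideal} is just the content behind condition (ii), so it is the same argument in slightly different packaging.
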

\begin{proof}
It follows from \cref{prp:char KE = torf} (iii) since $\Spec R = \Max R$.
\end{proof}
We will use \cref{prp:char KE = torf} to prove KE-closed subcategories and torsion-free classes coincide in $\catmod R$ when $\dim R\le 1$
(cf.\ \cref{KE=torf in 1-dim,prp:KE=torf when 1-dim CM with can mod}).

Next, we study the relationship between KE-closed subcategories of $\catmod R$ and $\catmod R/I$
for an ideal $I$ of $R$. 
Let $F\colon \catA \to \catB$ be a functor between abelian categories.
For a subcategory $\catX$ of $\catB$,
we denote by $F^{-1}(\catX)$ the subcategory of $\catA$ 
consisting of $A\in \catA$ such that $FA\in \catX$.
\begin{lem}
Let $F\colon \catA \to \catB$ be an additive functor between abelian categories
and $\catX$ an additive subcategory of $\catB$.
\begin{enua}
\item
If $F$ is left exact and $\catX$ is closed under subobjects (resp.\ kernels),
then so is $F^{-1}(\catX)$.
\item
If $F$ is right exact and $\catX$ is closed under quotients (resp.\ cokernels),
then so is $F^{-1}(\catX)$.
\item
If $F$ is exact and $\catX$ is closed under extensions,
then so is $F^{-1}(\catX)$.
\end{enua}
\end{lem}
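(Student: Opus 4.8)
The plan is simply to unwind the definition of $F^{-1}(\catX)$ and apply the standard exactness properties of additive functors between abelian categories: a left exact functor preserves kernels (hence monomorphisms), a right exact functor preserves cokernels (hence epimorphisms), and an exact functor preserves short exact sequences. First I would record that $F^{-1}(\catX)$ is automatically additive: since $F$ is additive it commutes with finite direct sums, so for $A_1,A_2\in F^{-1}(\catX)$ one has $F(A_1\oplus A_2)\iso FA_1\oplus FA_2\in\catX$, using that $\catX$ is additive.

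For (1), assume $F$ is left exact. If $\catX$ is closed under subobjects, take a monomorphism $A'\inj A$ in $\catA$ with $A\in F^{-1}(\catX)$; left exactness makes $FA'\inj FA$ a monomorphism in $\catB$, and $FA\in\catX$ forces $FA'\in\catX$, i.e.\ $A'\in F^{-1}(\catX)$. If instead $\catX$ is closed under kernels, take $f\colon A\to A'$ with $A,A'\in F^{-1}(\catX)$; since $F$ preserves kernels we get $F(\Ker f)\iso\Ker(Ff)$, and $Ff\colon FA\to FA'$ has both terms in $\catX$, hence $\Ker(Ff)\in\catX$ and $\Ker f\in F^{-1}(\catX)$. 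Part (2) follows from (1) by duality: apply (1) to $F^{\op}\colon\catA^{\op}\to\catB^{\op}$, which is left exact, noting that subobjects and kernels in $\catA^{\op}$ are exactly quotients and cokernels in $\catA$.

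For (3), assume $F$ is exact and let $0\to A\to B\to C\to 0$ be an exact sequence in $\catA$ with $A,C\in F^{-1}(\catX)$. Exactness of $F$ yields a short exact sequence $0\to FA\to FB\to FC\to 0$ in $\catB$ with $FA,FC\in\catX$, so $FB\in\catX$ because $\catX$ is extension-closed; thus $B\in F^{-1}(\catX)$.

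There is essentially no obstacle in this lemma; the only thing to watch is invoking the correct half of the exactness hypothesis in each clause (left exact for kernels and monomorphisms, right exact for cokernels and epimorphisms) and being careful with the direction of the opposite-category duality used in (2).
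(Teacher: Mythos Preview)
Your argument is correct and is exactly the straightforward verification the paper has in mind; indeed, the paper omits the proof entirely with the remark that it is straightforward. There is nothing to add.
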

\begin{proof}
We omit the proof since it is straightforward.
\end{proof}

Let $\phi\colon R\to S$ be a finite homomorphism of commutative noetherian rings.
Then the restriction functor $\pi \colon \catmod S \to \catmod R$ is exact.
Thus, if $\catX$ is a KE-closed subcategory (resp.\ torsion-free class) of $\catmod R$,
then so is $\pi^{-1}(\catX)$ in $\catmod S$.
It is natural to ask what $\Ass (\pi^{-1}(\catX))$ is.

\begin{prp}\label{prp:inverse image torf}
Let $\phi\colon R\to S$ be a finite homomorphism of commutative noetherian rings
and $f\colon \Spec S \to \Spec R$ the natural map induced by $\phi$. 
Consider the restriction functor $\pi \colon \catmod S \to \catmod R$.
Then we have that $\pi^{-1}(\catmod^{\ass}_{\Phi} R) = \catmod^{\ass}_{f^{-1}(\Phi)} S$
for any subset $\Phi$ of $\Spec R$.
\end{prp}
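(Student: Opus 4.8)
The plan is to reduce the statement to the set-theoretic identity
\[
\Ass_R(\pi N) = f(\Ass_S N) \qquad \text{for every } N \in \catmod S,
\]
where $f(\Ass_S N)$ denotes the image $\{\phi^{-1}(\qq) \mid \qq \in \Ass_S N\}$. Granting this, the proposition becomes formal: for $N \in \catmod S$ we have $N \in \pi^{-1}(\catmod^{\ass}_{\Phi} R)$ iff $\Ass_R(\pi N) \subseteq \Phi$, which by the identity is equivalent to $f(\Ass_S N) \subseteq \Phi$, and since $f(A) \subseteq \Phi$ is the same as $A \subseteq f^{-1}(\Phi)$ for any subset $A \subseteq \Spec S$, this is in turn equivalent to $\Ass_S N \subseteq f^{-1}(\Phi)$, i.e.\ $N \in \catmod^{\ass}_{f^{-1}(\Phi)} S$. (One should also note that the statement only makes sense because $\phi$ is finite, which is precisely what guarantees that $\pi$ sends $\catmod S$ into $\catmod R$.)

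The inclusion $f(\Ass_S N) \subseteq \Ass_R(\pi N)$ is elementary: if $\qq = \Ann_S(x) \in \Ass_S N$ with $x \in N$, then, viewing $x$ in $\pi N$, we have $\Ann_R(x) = \phi^{-1}(\Ann_S x) = \phi^{-1}(\qq)$, and this is a prime of $R$ as the contraction of a prime, whence $\phi^{-1}(\qq) \in \Ass_R(\pi N)$.

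The main obstacle is the reverse inclusion $\Ass_R(\pi N) \subseteq f(\Ass_S N)$; I would prove it by localizing. Fix $\pp \in \Ass_R(\pi N)$ and write $\pp = \Ann_R(x)$ with $x \in N$. Then $x/1 \neq 0$ in $N_\pp = N \otimes_R R_\pp$ (otherwise $ux = 0$ for some $u \in R \setminus \pp$, contradicting $\Ann_R x = \pp$), the ring $S_\pp := S \otimes_R R_\pp$ is noetherian, and $\pp S_\pp$ annihilates $x/1$. Choosing an associated prime $\qq'$ of the nonzero $S_\pp$-module $S_\pp\cdot(x/1)$, we get $\qq' \in \Ass_{S_\pp}(N_\pp)$ with $\pp S_\pp \subseteq \Ann_{S_\pp}(x/1) \subseteq \qq'$. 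Let $\qq \subseteq S$ be the contraction of $\qq'$. Since associated primes commute with localization over the noetherian ring $S$, we have $\qq \in \Ass_S N$. Finally, $\qq$ coming from a prime of $S_\pp$ forces $\qq \cap \phi(R \setminus \pp) = \emptyset$, i.e.\ $\phi^{-1}(\qq) \subseteq \pp$, while $\pp S_\pp \subseteq \qq'$ forces $\phi(\pp) \subseteq \qq$, i.e.\ $\pp \subseteq \phi^{-1}(\qq)$; hence $\pp = \phi^{-1}(\qq) = f(\qq) \in f(\Ass_S N)$. (Alternatively, the identity $\Ass_R(\pi N) = f(\Ass_S N)$ for a ring homomorphism with noetherian target is classical and could simply be quoted.)
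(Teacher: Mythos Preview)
Your proposal is correct and follows exactly the same route as the paper: both reduce the statement to the identity $\Ass_R(\pi N)=f(\Ass_S N)$ and then carry out the same formal chain of equivalences. The only difference is that the paper simply recalls this identity as known, whereas you additionally supply a self-contained proof of it (and note yourself that it could just be quoted).
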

\begin{proof}
Recall that $\Ass_R(M_R)= f(\Ass_S(M))$ for any $M\in \catmod S$.
The proposition follows from the following:
\begin{align*}
M \in \pi^{-1}(\catmod^{\ass}_{\Phi} R)
& \Equi M_R \in \catmod^{\ass}_{\Phi} R 
\Equi f(\Ass_S(M)) = \Ass_R(M_R) \subseteq \Phi\\
&\Equi \Ass_S(M) \subseteq f^{-1}(\Phi)
\Equi M \in \catmod^{\ass}_{f^{-1}(\Phi)} S.
\end{align*}
\end{proof}

Let $I$ be an ideal of a commutative ring $R$.
We denote by $V(I)$ the set of prime ideals of $R$ containing $I$.
For any $\pp\in V(I)$,
we denote by $\ol{\pp}$ the prime ideal of $R/I$ corresponding to $\pp$.

\begin{prp}\label{prp:Ass catX  R/I}
Let $I$ be an ideal of a commutative noetherian ring $R$
and $\pi \colon \catmod(R/I) \to \catmod R$ the restriction functor.
Let $\catX$ be a KE-closed subcategory of $\catmod R$.
Then the natural map $f\colon \Spec(R/I) \inj \Spec R$ induces
a bijection $\Ass(\pi^{-1}(\catX)) \isoto \Ass \catX \cap V(I)$.
\end{prp}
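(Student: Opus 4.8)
The plan is to show that the natural injection $f\colon\Spec(R/I)\inj\Spec R$, $\ol\pp\mapsto\pp$, with image $V(I)$, restricts to a bijection onto $\Ass\catX\cap V(I)$. Since $f$ is injective, it suffices to verify the two inclusions
\[
f\bigl(\Ass(\pi^{-1}(\catX))\bigr)\subseteq\Ass\catX\cap V(I)
\quad\text{and}\quad
\Ass\catX\cap V(I)\subseteq f\bigl(\Ass(\pi^{-1}(\catX))\bigr).
\]
Throughout I would use the identity $\Ass_R(\pi(N))=f(\Ass_{R/I}N)$, valid for every $N\in\catmod(R/I)$ (already recalled in the proof of \cref{prp:inverse image torf}).

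For the first inclusion, suppose $\ol\pp\in\Ass_{R/I}N$ for some $N\in\pi^{-1}(\catX)$. Then $\pi(N)\in\catX$, so $\pp\in f(\Ass_{R/I}N)=\Ass_R(\pi(N))\subseteq\Ass\catX$, and $\pp\in V(I)$ automatically since $\ol\pp\in\Spec(R/I)$. This settles the easy half.

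The content is the second inclusion. Fix $\pp\in\Ass\catX$ with $I\subseteq\pp$ and choose $M\in\catX$ with $\pp\in\Ass_RM$. The key step is to pass from $M$ to $N:=\Hom_R(R/I,M)$, which is canonically an $R/I$-module, namely the submodule $(0:_MI)$ of $M$, hence finitely generated over $R/I$; viewed over $R$ it equals $\Hom_R(R/I,M)$, which lies in $\catX$ by \cref{fct:KE-closed imply Hom-ideal}, so $N\in\pi^{-1}(\catX)$. By \cref{ass hom}(1) we have $\Ass_R(\pi(N))=\Supp_R(R/I)\cap\Ass_RM=V(I)\cap\Ass_RM$, which contains $\pp$. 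Since $\Ass_R(\pi(N))=f(\Ass_{R/I}N)$ and $f$ is injective, this forces $\ol\pp\in\Ass_{R/I}N\subseteq\Ass(\pi^{-1}(\catX))$, proving the second inclusion; combined with injectivity of $f$, this yields the claimed bijection. The only delicate point is the choice of $N$: it must simultaneously be annihilated by $I$ (so as to descend to $\catmod(R/I)$), belong to $\catX$, and retain $\pp$ among its associated primes over $R$. The module $\Hom_R(R/I,M)$ is precisely the object achieving all three, via the ``Hom-ideal'' property of KE-closed subcategories in \cref{fct:KE-closed imply Hom-ideal} together with \cref{ass hom}(1); everything else is bookkeeping.
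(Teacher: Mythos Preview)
Your proof is correct and follows essentially the same approach as the paper: both arguments handle the easy inclusion via $\Ass_R(\pi(N))=f(\Ass_{R/I}N)$, and for the reverse inclusion both pass from $M\in\catX$ with $\pp\in\Ass_RM$ to $N=\Hom_R(R/I,M)$, invoking \cref{fct:KE-closed imply Hom-ideal} to get $N\in\catX$ and \cref{ass hom}(1) to see that $\pp\in\Ass_R N$.
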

\begin{proof}
The injective map $f$ restricts to the map $\Ass(\pi^{-1}(\catX)) \inj \Ass \catX \cap V(I)$
since $f$ induces the bijection $\Ass_{R/I} M \isoto \Ass_R M$ for any $M \in \catmod R/I$.
It is enough to show that the map $\Ass(\pi^{-1}(\catX)) \inj \Ass \catX \cap V(I)$ is surjective.
Take $\pp \in \Ass \catX \cap V(I)$.
Then there exists $M\in\catX$ such that $\pp \in \Ass M$.
Then $\Hom_R(R/I, M) \in \catX$ by \cref{fct:KE-closed imply Hom-ideal}
and $\Ass_R \Hom_R(R/I, M) = \Ass M \cap V(I) \ni \pp$ by \cref{ass hom} (1).
Thus, we have $\ol{\pp} \in \Ass_{R/I} \Hom_R(R/I, M) \subseteq \Ass(\pi^{-1}(\catX))$
and obtain the desired conclusion.
\end{proof}

The following proposition is useful in reducing 
the study of KE-closed subcategories to a simpler case.
\begin{prp} \label{prp:surj base change}
Let $I$ be an ideal of a commutative noetherian ring $R$
and $\pi \colon \catmod(R/I) \to \catmod R$ the restriction functor.
Let $\catX$ be an extension-closed subcategory of $\catmod R$.
\begin{enua}
\item
For any $\pp\in V(I)$,
if $\pi^{-1}(\catX) \supseteq \catmod^{\ass}_{\{\ol{\pp}\}}(R/I)$,
then $\catX \supseteq \catmod^{\ass}_{\{\pp\}}R$.
\item
Let $\catF$ be a torsion-free class of $\catmod R$ such that $\Ass \catF \subseteq V(I)$.
If $\pi^{-1}(\catX) \supseteq \pi^{-1}(\catF)$,
then $\catX \supseteq \catF$.
\end{enua}
\end{prp}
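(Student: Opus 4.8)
The plan is to prove part (1) directly and then deduce part (2) from it. For part (1), the key point is that $\catmod^{\ass}_{\{\pp\}}R$ is the torsion-free closure of $R/\pp$: by \cref{ex:cm R is torf}(1) together with \cref{dfn:notation subcat}(5) we have
$\catmod^{\ass}_{\{\pp\}}R=\F(\add(R/\pp))=\Filt(\Sub(\add(R/\pp)))$,
so every object of $\catmod^{\ass}_{\{\pp\}}R$ is an iterated extension of submodules $N$ of finite direct sums $(R/\pp)^{\oplus n}$. Since $\catX$ is extension-closed, it therefore suffices to check that each such $N$ lies in $\catX$. Here is where $\pp\in V(I)$ enters: $I$ annihilates $R/\pp$, hence $(R/\pp)^{\oplus n}$, hence $N$, so $N$ is naturally a finitely generated $R/I$-module; and from $\Ass_R N\subseteq\Ass_R(R/\pp)=\{\pp\}$ together with the correspondence $\Spec(R/I)\leftrightarrow V(I)$ we get $\Ass_{R/I}N\subseteq\{\ol\pp\}$, i.e.\ $N\in\catmod^{\ass}_{\{\ol\pp\}}(R/I)\subseteq\pi^{-1}(\catX)$. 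Thus $\pi(N)=N$ belongs to $\catX$ as an $R$-module, and extension-closedness of $\catX$ finishes part (1).

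For part (2), I would set $\Phi:=\Ass\catF$, so that $\catF=\catmod^{\ass}_{\Phi}R$ by Takahashi's classification (\cref{fct:Takahashi}) and $\Phi\subseteq V(I)$ by hypothesis. The surjection $R\to R/I$ is module-finite and the induced map on spectra is the inclusion $f\colon\Spec(R/I)\hookrightarrow\Spec R$ of $V(I)$, so \cref{prp:inverse image torf} gives $\pi^{-1}(\catF)=\catmod^{\ass}_{f^{-1}(\Phi)}(R/I)$ with $f^{-1}(\Phi)=\{\ol\pp:\pp\in\Phi\}$. Consequently, for each $\pp\in\Phi$ we have $\catmod^{\ass}_{\{\ol\pp\}}(R/I)\subseteq\pi^{-1}(\catF)\subseteq\pi^{-1}(\catX)$, and part (1) applies to yield $\catX\supseteq\catmod^{\ass}_{\{\pp\}}R$.

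It remains to upgrade ``$\catX\supseteq\catmod^{\ass}_{\{\pp\}}R$ for every $\pp\in\Phi$'' to ``$\catX\supseteq\catmod^{\ass}_{\Phi}R$''. I would do this by passing to the auxiliary subcategory $\catY:=\catX\cap\catmod^{\ass}_{\Phi}R$: it is extension-closed, and $\Ass\catY=\Phi$ (the inclusion $\subseteq$ is clear, and $\supseteq$ holds because $\catY\supseteq\catmod^{\ass}_{\{\pp\}}R$ for every $\pp\in\Phi$ by the previous step); then \cref{prp:mod^ass gen} applied to $\catY$ forces $\catY=\catmod^{\ass}_{\Phi}R$, whence $\catX\supseteq\catF$. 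Alternatively one can bypass \cref{prp:mod^ass gen} by inducting on $|\Ass M|$ for $M\in\catmod^{\ass}_{\Phi}R$, peeling off one associated prime at a time via \cref{fct:Goto-Watanabe}.

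The only genuinely delicate points are the two places where the hypotheses are used essentially, and I expect these to be the real content rather than a formal obstacle. First, in part (1) the condition $\pp\in V(I)$ is precisely what turns the filtration pieces of an object of $\catmod^{\ass}_{\{\pp\}}R$ into $R/I$-modules lying in $\catmod^{\ass}_{\{\ol\pp\}}(R/I)$; one must resist arguing that an $R$-module with support at a single prime is itself an $R/I$-module, which is false, so the torsion-free-closure description genuinely cannot be replaced by a crude support argument. Second, in part (2) the subcategory $\catX$ is assumed only extension-closed, not KE-closed, so \cref{prp:restrict dense torf} is unavailable; the passage to $\catY=\catX\cap\catmod^{\ass}_{\Phi}R$ is engineered exactly to land in the hypotheses of \cref{prp:mod^ass gen}. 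Everything else reduces to routine bookkeeping with associated primes along the finite map $R\to R/I$.
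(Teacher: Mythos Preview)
Your proof is correct, and for part~(1) it is genuinely different from the paper's argument. The paper proceeds by contradiction: assuming some $M$ with $\Ass M=\{\pp\}$ lies outside $\catX$, it builds a descending chain $M=M_0\supsetneq M_1\supsetneq\cdots$ by repeatedly taking kernels of the evaluation map $M_i\to (R/\pp)^{\oplus n_i}$, shows each $M_i$ still has $\Ass M_i=\{\pp\}$ and $M_i\notin\catX$, and then localizes at $\pp$ to derive a contradiction from finite length. Your approach instead exploits the structural description $\catmod^{\ass}_{\{\pp\}}R=\Filt(\Sub(\add(R/\pp)))$ from \cref{ex:cm R is torf}(1) to see directly that the filtration subquotients are $R/\pp$-modules, hence $R/I$-modules lying in $\catmod^{\ass}_{\{\ol\pp\}}(R/I)$. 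This is shorter and more transparent; the paper's chain argument, being modeled on Takahashi's original proof, is more self-contained in that it does not invoke the identification $\F(\catX)=\catmod^{\ass}_{\Ass\catX}R$, but that identification is already available in the paper, so your route is the natural one here.

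For part~(2) your argument is essentially the paper's: both reduce to part~(1) prime by prime and then invoke \cref{prp:mod^ass gen}. Your detour through $\catY=\catX\cap\catmod^{\ass}_{\Phi}R$ is a careful way to match the literal hypotheses of \cref{prp:mod^ass gen}, whereas the paper simply writes ``follows from (1) and \cref{prp:mod^ass gen}''; the underlying Goto--Watanabe induction you sketch as an alternative is exactly what makes that citation work.
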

\begin{proof}
(1)
The proof is inspired by \cite[Lemma 4.2]{Takahashi}.
To obtain a contradiction, suppose that $\catX \not\supseteq \catmod^{\ass}_{\{\pp\}}R$.
There exists $M\in \catmod R$ such that $\Ass M =\{\pp\}$ and $M\not\in \catX$.
Let $f_1,\dots,f_n$ be a system of generators of $\Hom_R(M,R/\pp)$ as an $R$-module.
Put $f:=\begin{bsmatrix}
f_1 & \cdots & f_n
\end{bsmatrix}^t \colon M \to (R/\pp)^{\oplus n}$
and $M_1:=\Ker(f)$.
Then $\Ima(f)_{(R/I)} \in \catmod^{\ass}_{\{\ol{\pp}\}}(R/I)$
since $(R/\pp)_{(R/I)} \iso (R/I)/(\pp/I) \in \catmod^{\ass}_{\{\ol{\pp}\}}(R/I)$.
From this, we have that $\Ima(f) \in \catX$
since $\catmod^{\ass}_{\{\ol{\pp}\}}(R/I) \subseteq \pi^{-1}(\catX)$.
Because $\catX$ is extension-closed and 
there is an exact sequence $0\to M_1 \to M \to \Ima(f)\to 0$,
we have that $M_1 \not\in \catX$.
In particular, $M_1$ is nonzero, and hence $\Ass M_1= \{\pp\}$.

Iterating this procedure, we obtain a descending chain 
$M=M_0 \supseteq M_1 \supseteq M_2 \supseteq \cdots $ of $R$-submodules of $M$
such that $\Ass M_i=\{\pp\}$ and $M_i \not\in \catX$ for any $i$.
The $R_{\pp}$-module $M_{\pp}$ is of finite length since $\Ass M=\{\pp\}$.
Thus, the descending chain 
$M_{\pp}=M_{0,\pp} \supseteq M_{1,\pp} \supseteq M_{2,\pp} \supseteq \cdots$
becomes stationary, that is, there is some $n \ge 0$ such that 
$M_{n,\pp}=M_{n+1,\pp}=M_{n+2,\pp}=\cdots$.
By the construction of $M_n$,
we can conclude that $\Hom_{R_{\pp}}(M_{n,\pp},\kk(\pp))=\Hom_R(M_n,R/\pp)_{\pp}=0$,
and hence $M_{n,\pp}=0$.
This contradicts the fact that $\pp \in \Ass M_n \subseteq \Supp M_n$.

(2)
Put $\Phi :=\Ass \catF \subseteq \Spec R$ 
and $\ol{\Phi}:=\{\ol{\pp} \mid \pp\in\Phi\} \subseteq \Spec(R/I)$.
Then $\catF=\catmod^{\ass}_{\Phi} R$ and $\pi^{-1}(\catF)=\catmod^{\ass}_{\ol{\Phi}}(R/I)$
by \cref{prp:inverse image torf} and the assumption that $\Phi = \Ass \catF \subseteq V(I)$.
Thus (2) follows from (1) and \cref{prp:mod^ass gen}.
\end{proof}

\subsection{The one-dimensional case}\label{ss:dim 1}
In this subsection, we classify the KE-closed subcategories of $\catmod R$ 
when $R$ is a commutative noetherian ring with $\dim R \le 1$.
We will see that KE-closed subcategories coincide with torsion-free classes in this case
(\cref{KE=torf in 1-dim}).
Moreover, the converse holds, that is,
if KE-closed subcategories coincide with torsion-free classes,
then $\dim R \le 1$ for a homomorphic image $R$ of a Cohen-Macaulay ring
(\cref{prp:ke=torf imply dim=1}).
Let us start with the latter direction.
\begin{lem}\label{depth at most 1}
Let $R$ be a commutative noetherian ring.
If $\ke(\catmod R)=\torf(\catmod R)$ holds,
then we have $\depth_{R_\pp} M_{\pp} \le 1$ for any $R$-module $M\in \catmod R$ and $\pp \in \Supp M$.
\end{lem}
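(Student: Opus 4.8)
The plan is to prove the contrapositive: assuming there exist $M \in \catmod R$ and $\pp \in \Supp M$ with $\depth_{R_\pp} M_\pp \ge 2$, I will exhibit a KE-closed subcategory of $\catmod R$ that fails to be a torsion-free class, contradicting $\ke(\catmod R)=\torf(\catmod R)$. The candidate is
\[
\catX := \{N \in \catmod R \mid \depth_{R_\pp} N_\pp \ge 2\},
\]
with the convention $\depth 0 = \infty$ (so $\catX$ contains every $N$ with $N_\pp = 0$, in particular $0$). It visibly contains $M$, hence $\catX \ne \{0\}$. The point of the whole argument is to notice this subcategory; once it is written down, the verifications are routine.

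First I would check $\catX$ is KE-closed. Since localization at $\pp$ is exact, for a conflation $0\to A\to B\to C\to 0$ in $\catmod R$ with $A,C\in\catX$ the standard depth inequality over $R_\pp$ gives $\depth_{R_\pp}B_\pp \ge \min\{\depth_{R_\pp}A_\pp,\depth_{R_\pp}C_\pp\}\ge 2$; and for $f\colon X\to Y$ with $X,Y\in\catX$, using $(\Ker f)_\pp = \Ker(f_\pp)$ and the two exact sequences $0\to \Ima f_\pp \to Y_\pp \to \Cok f_\pp \to 0$ and $0 \to \Ker f_\pp \to X_\pp \to \Ima f_\pp \to 0$, the depth inequality yields first $\depth_{R_\pp}\Ima f_\pp \ge 1$ and then $\depth_{R_\pp}\Ker f_\pp \ge 2$, so $\Ker f\in\catX$. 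Closure under finite direct sums is clear. Hence $\catX$ is a KE-closed, nonzero subcategory.

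The heart is to show $\catX$ is not a torsion-free class. Suppose it were. Then by \cref{prp:mod^ass gen} (or \cref{fct:Takahashi}) we have $\catX = \catmod^{\ass}_{\Phi}R$ with $\Phi := \Ass\catX$. Since $\pp\in\Supp M$, choose $\qq\in\Ass_R M$ with $\qq\subseteq\pp$; then $\qq\in\Phi$, and $\qq\ne\pp$ because $\depth_{R_\pp}M_\pp\ge 1$ forces $\pp R_\pp\notin\Ass_{R_\pp}M_\pp$. From $\{\qq\}\subseteq\Phi$ we get $\catmod^{\ass}_{\{\qq\}}R\subseteq\catX$. Now $R/\qq\in\catmod^{\ass}_{\{\qq\}}R\subseteq\catX$, and its submodule $\pp/\qq$ — a nonzero ideal of the domain $R/\qq$ — has $\Ass_R(\pp/\qq)=\{\qq\}$, so $\pp/\qq\in\catX$ too. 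Writing $S := R_\pp/\qq R_\pp$, a local domain with maximal ideal $\mathfrak{m}_S = \pp R_\pp/\qq R_\pp$ and residue field $\kk(\pp)$, we obtain $\depth_{R_\pp}S\ge 2$ and $\depth_{R_\pp}\mathfrak{m}_S\ge 2$ (since $(R/\qq)_\pp = S$ and $(\pp/\qq)_\pp = \mathfrak{m}_S$). Applying the depth inequality $\depth_{R_\pp}C\ge\min\{\depth_{R_\pp}A-1,\depth_{R_\pp}B\}$ to the exact sequence $0\to\mathfrak{m}_S\to S\to\kk(\pp)\to 0$ gives $\depth_{R_\pp}\kk(\pp)\ge\min\{2-1,2\}=1$, which is absurd: $\pp R_\pp\in\Ass_{R_\pp}\kk(\pp)$ forces $\depth_{R_\pp}\kk(\pp)=0$. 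This contradiction finishes the proof.

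The step I would be most careful about is the associated-prime bookkeeping — that $\qq$ can be taken inside $\pp$, and that $\pp/\qq$ has exactly $\Ass_R(\pp/\qq)=\{\qq\}$ (immediate, as it is a nonzero ideal of the domain $R/\qq$). Everything else rests only on the depth inequality for short exact sequences and on the description of torsion-free classes of $\catmod R$ via their associated primes (\cref{prp:mod^ass gen}, \cref{fct:Takahashi}), all already in hand. I have chosen not to reduce to the local case, but one could equally first observe, via the unlabeled lemma preceding \cref{prp:inverse image torf} and exactness of localization, that $\ke(\catmod R)=\torf(\catmod R)$ passes to $\catmod R_\pp$, and then run the same construction there with $\mathfrak{m}$ in place of $\pp/\qq$.
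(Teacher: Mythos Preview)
Your proof is correct and hinges on the same construction as the paper: the subcategory $\catX=\{N\in\catmod R\mid \depth_{R_\pp}N_\pp\ge 2\}$, shown KE-closed by the depth lemma. The only divergence is in the contradiction step. You invoke Takahashi's classification to write $\catX=\catmod^{\ass}_\Phi R$, locate an auxiliary prime $\qq\subsetneq\pp$ in $\Ass M$, and then run the depth inequality on $0\to\pp/\qq\to R/\qq\to\kk(\pp)\to 0$ after localizing. The paper stays closer to $M$: once $\catX$ is assumed torsion-free, closure under subobjects gives $\pp M\in\catX$ directly, and the depth lemma applied to $0\to\pp M\to M\to M/\pp M\to 0$ (with $(M/\pp M)_\pp\ne 0$ by Nakayama) forces $\depth_{R_\pp}(\pp M)_\pp=1$, contradicting $\pp M\in\catX$. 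Your route is sound but longer; the paper's avoids both the classification theorem and the choice of $\qq$.
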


\begin{proof}
Take $M\in \catmod R$ and $\pp \in \Supp M$.
Consider the subcategory $\catX$ consisting of $N \in \catmod R$ 
such that $\depth_{R_\pp} N_{\pp} \ge 2$.
It is a KE-closed subcategory of $\catmod R$ by the depth lemma.
Then $\catX$ is a torsion-free class of $\catmod R$ by the assumption.
Suppose that $\depth_{R_\pp} M_{\pp} \ge 2$, that is, $M \in \catX$.
Then we have $\pp M \in \catX$ since $\catX$ is closed under subobjects.
On the other hand, Nakayama's lemma implies that $(M/\pp M)_{\pp}$ is nonzero.
Therefore, applying the depth lemma to the short exact sequence $0 \to \pp M \to M \to M/\pp M \to 0$
yields that $\depth_{R_\pp}(\pp M)_{\pp}=1$.
This is a contradiction.
Therefore, we have $\depth_{R_\pp} M_{\pp} \le 1$.
\end{proof}

The dimension of a commutative noetherian ring such that $\ke(\catmod R)=\torf(\catmod R)$
is constrained by this lemma as follows.
\begin{cor}\label{prp:ke=torf imply dim=1}
Let $R$ be a commutative noetherian ring with $\ke(\catmod R)=\torf(\catmod R)$.
\begin{enua}
\item
In general, we have $\dim R \le 2$.
\item
If $R$ is a homomorphic image of a Cohen-Macaulay ring,
then we have $\dim R \le 1$.
\end{enua}
\end{cor}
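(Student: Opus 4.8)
The plan is to derive both parts from \cref{depth at most 1}, which already shows that $\ke(\catmod R)=\torf(\catmod R)$ forces $\depth_{R_\pp}M_\pp\le 1$ for every $M\in\catmod R$ and every $\pp\in\Supp M$. Thus it suffices to produce, whenever $\dim R$ is assumed too large, a single finitely generated $R$-module $M$ together with a prime $\pp\in\Supp M$ for which $\depth_{R_\pp}M_\pp\ge 2$. The module that does this job is a canonical module: it is nonzero, its dimension equals that of the ambient local ring, and it satisfies Serre's condition $(S_2)$, so its depth is at least $\min\{2,\dim\}$.

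For part (2), assume $\dim R\ge 2$ and pick $\pp\in\Spec R$ with $\htt\pp\ge 2$. Since $R$ is a homomorphic image of a Cohen--Macaulay ring, $R_\pp$ admits a canonical module $\omega$ (this is the only use of the hypothesis; it holds for the listed examples, where $R_\pp$ is a homomorphic image of a regular, hence Gorenstein, local ring). Then $\dim_{R_\pp}\omega=\dim R_\pp\ge 2$, so $\depth_{R_\pp}\omega\ge 2$. Choosing $M\in\catmod R$ with $M_\pp\cong\omega$ -- every finitely generated $R_\pp$-module is a localization of one over $R$ -- we get $\pp\in\Supp M$ because $\omega\ne 0$, and hence $\depth_{R_\pp}M_\pp\le 1$ by \cref{depth at most 1}, a contradiction. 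Therefore $\dim R\le 1$.

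For part (1), $R$ is arbitrary and $R_\pp$ need not admit a canonical module, so I would pass to the completion. Assume $\dim R\ge 3$ and pick $\pp$ with $\htt\pp\ge 3$, so that $S:=\widehat{R_\pp}$ is complete local of dimension $\ge 3$; by Cohen's structure theorem it is a homomorphic image of a regular local ring and therefore has a canonical module $\omega_S$ with $\depth_S\omega_S\ge\min\{2,\dim S\}=2$. It remains to bring this down to $R$: to find $M\in\catmod R$ with $\pp\in\Supp M$ and $\depth_{R_\pp}M_\pp\ge 2$, using that $\depth_{R_\pp}M_\pp=\depth_S(\widehat{M_\pp})$ and that $R_\pp$ is a localization of $R$. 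Once such an $M$ is at hand, \cref{depth at most 1} is contradicted exactly as in part (2), giving $\dim R\le 2$.

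The main obstacle is precisely this descent in part (1): a finitely generated $S$-module of depth $\ge 2$, such as $\omega_S$, need not be the completion of an $R_\pp$-module, so $\omega_S$ cannot simply be ``un-completed''. I would attack it by an $\mm$-adic approximation argument -- taking a finite presentation matrix of $\omega_S$ over $S$ and perturbing it to a matrix over $R_\pp$ that is close enough that the completed cokernel is still forced to have depth $\ge 2$ -- or, when it is module-finite, by replacing $R_\pp$ by a quotient domain of dimension $\ge 2$ and taking its $(S_2)$-ification. Verifying that depth $\ge 2$ survives such a construction is where essentially all the effort of part (1) lies; part (2), by contrast, is immediate once the canonical module of $R_\pp$ is available.
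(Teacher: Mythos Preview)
Your approach via \cref{depth at most 1} is the right starting point, but both parts have issues.

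\textbf{Part (2).} The gap is that a homomorphic image of a Cohen--Macaulay ring need \emph{not} admit a canonical module: a local ring has a dualizing complex (and hence a canonical module) if and only if it is a homomorphic image of a \emph{Gorenstein} local ring, and Cohen--Macaulay does not imply Gorenstein. Your parenthetical already concedes this, so your argument only covers the special case where $R$ is a quotient of a Gorenstein ring, not the full statement. The paper avoids the canonical module entirely. Writing $R=S/J$ with $S$ Cohen--Macaulay and $\qq=\pi^{-1}(\pp)$, it chooses an ideal $I\subseteq J$ with $I_\qq$ generated by a maximal $S_\qq$-regular sequence inside $J_\qq$; then $(S/I)_\qq$ is Cohen--Macaulay of dimension $\dim R_\pp$, and the finitely generated $R$-module $\Hom_S(R,S/I)$ satisfies $\depth_{R_\pp}\Hom_S(R,S/I)_\pp\ge\min\{2,\dim R_\pp\}$ by \cref{ass hom}(2). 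This manufactures the high-depth module directly from the CM presentation, with no Gorenstein hypothesis.

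\textbf{Part (1).} You correctly identify that the descent from $\widehat{R_\pp}$ to $R$ is the crux, but you do not carry it out, and neither perturbation sketch is easy to make rigorous (depth is not an open condition under arbitrary matrix perturbation). The paper sidesteps all of this: \cref{depth at most 1} applied to $M=R$ already gives $\depth R_\pp\le 1$ for every $\pp$, and then one invokes the elementary inequality $\dim R_\mm-1\le\sup\{\depth R_\pp:\pp\subseteq\mm\}$ from \cite[Lemma~1.4]{CFF02}, which holds for any noetherian local ring. This immediately yields $\dim R_\mm\le 2$ for every maximal ideal $\mm$, hence $\dim R\le 2$, with no completion or descent needed.
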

\begin{proof}
(1)
For any maximal ideal $\mm$ of $R$,
we have the following inequalities:
\[
\dim R_{\mm}-1 
\le \sup\{\depth R_{\pp} \mid \pp \in \Spec R,\; \pp \subseteq \mm\}
\le \sup\{\depth R_{\pp} \mid \pp \in \Spec R\} \le 1,
\]
where the first and third inequalities follow from \cite[Lemma 1.4]{CFF02}
and \cref{depth at most 1}, respectively.
Thus we obtain $\dim R \le 2$.

(2)
We have a Cohen-Macaulay ring $S$ and a surjective map $\pi \colon S \to R$.
Fix a prime ideal $\pp\in \Spec R$.
Let $\qq$ be the prime ideal $\pi^{-1}(\pp)$ of $S$.
We may take an ideal $I\subseteq \Ker \pi$ such that $I_\qq$ is generated by a maximal regular sequence in $(\Ker \pi)_\qq$.
Then
\[
\depth_{S_\qq} (S/I)_\qq=\dim (S/I)_\qq =\dim S_\qq -\grade (I_\qq, S_\qq) =\dim S_\qq -\grade \left((\Ker \pi)_\qq, S_\qq\right)=\dim R_\pp.
\]
Here the second and fourth equalities follow from \cite[Theorem 2.1.2(b)]{BH} since $S$ is Cohen-Macaulay.
In particular, $\Hom_S(R,S/I)_\qq$ is nonzero by \cref{ass hom}(1).
Note that 
\[
\min\{2, \dim R_\pp\} \le \depth_{S_\qq} \Hom_S(R,S/I)_\qq=\depth_{R_\pp} \Hom_S(R,S/I)_\pp \le 1.  
\]
Here the first and the third inequalities follow from \cref{ass hom}(2) and \cref{depth at most 1} respectively.
This shows that $\dim R_\pp \le 1$.
Since $\pp$ can be any prime ideal of $R$, we conclude that $\dim R\le 1$.
\end{proof}

\begin{rmk}
There exists a $2$-dimensional commutative noetherian local domain $R$ having no nonzero maximal Cohen-Macaulay $R$-modules; see \cite[Example 2.1]{LW}.
For such a ring, we can't apply the same kind of proof of \cref{prp:ke=torf imply dim=1}.
Thus, without assuming that $R$ is a homomorphic image of a Cohen-Macaulay ring, we don't know whether the equality $\ke(\catmod R)=\torf(\catmod R)$ forces $R$ to have $\dim R \le 1$ or not.
\end{rmk}

We introduce the definition of dominant resolving subcategories and record its characterization.
This class of subcategories plays a key role in the rest of this section.

\begin{dfn} \label{dfn:dom res}
Let $R$ be a commutative noetherian ring and let $M\in \catmod R$.
Let $\catX$ be a subcategory of $\catmod R$.
\begin{enua}
\item For an integer $n\ge 0$, We denote by $\Omega^n_RM$ the $n$-th syzygy module of $M$ in some $R$-projective resolution of $M$.
Note that $\Omega^n_RM$ is uniquely determined up to projective summands.
\item $\catX$ is said to be \emph{resolving} if it contains all projective $R$-modules, is additive, and is closed under direct summands, extensions, and kernels of epimorphisms.
\item $\catX$ is
said to be \emph{dominant} if for all $\pp\in\Spec R$, there exists an integer $n\ge 0$ such that $\Omega^n_R(R/\pp)\in \catX$.
\item
We denote by $\domres(\catmod R)$ the set of dominant resolving subcategories of $\catmod R$.
\end{enua}
\end{dfn}

\begin{thm}[{\cite[Corollary 4.6 and Proposition 5.3(1)]{Takahashi2}}] \label{thm:dom res}
Let $R$ be a commutative noetherian ring and let $\catX$ be a resolving subcategory of $\catmod R$.
Then the following are equivalent.
\begin{enua}
\item $\catX$ is dominant.
\item The equality below holds true:
\[
\catX=\{M\in\catmod R\mid \depth M_\pp \ge \inf_{X\in\catX}\{\depth_{R_\pp} X_\pp\}\text{ for all }\pp\in\Spec R\}.
\]
\end{enua}
\end{thm}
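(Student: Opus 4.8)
Write $d(\pp):=\inf_{X\in\catX}\{\depth_{R_\pp}X_\pp\}$ for $\pp\in\Spec R$ and put
\[
\catY:=\{M\in\catmod R\mid \depth M_\pp\ge d(\pp)\text{ for all }\pp\in\Spec R\},
\]
so that condition (2) is exactly the equality $\catX=\catY$. Three observations are routine. First, $d(\pp)\le\depth R_\pp$ for all $\pp$, since $R\in\catX$. Second, the inclusion $\catX\subseteq\catY$ is immediate from the definition of $d$, and $\catY$ is itself resolving: it contains the projectives, and closure under direct summands, extensions and kernels of epimorphisms all follow by applying the depth lemma after localizing at each $\pp$. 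Third, $\catX$ is closed under syzygies, because $\Omega_R N=\Ker(P\surj N)$ lies in $\catX$ whenever $N\in\catX$ and $P\surj N$ is a surjection from a projective module; in particular $\Omega^m_R(R/\pp)\in\catX$ for all $m\ge m_0$ as soon as $\Omega^{m_0}_R(R/\pp)\in\catX$. Thus (2) is equivalent to the single inclusion $\catY\subseteq\catX$.

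For $\mathrm{(2)}\imply\mathrm{(1)}$: assume $\catX=\catY$ and fix $\pp$. If $\pp\not\subseteq\qq$ then $(R/\pp)_\qq=0$, so $(\Omega^n_R(R/\pp))_\qq$ is free with depth $\depth R_\qq\ge d(\qq)$. If $\pp\subseteq\qq$, iterating the depth lemma along $0\to\Omega^{i+1}_R(R/\pp)\to P_i\to\Omega^i_R(R/\pp)\to 0$ after localizing at $\qq$ gives $\depth(\Omega^n_R(R/\pp))_\qq\ge\min\{\depth R_\qq,n\}$, which equals $\depth R_\qq\ge d(\qq)$ once $n$ is large. Hence $\Omega^n_R(R/\pp)\in\catY=\catX$ for $n\gg 0$, so $\catX$ is dominant.

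For $\mathrm{(1)}\imply\mathrm{(2)}$ one must prove $\catY\subseteq\catX$, and this is the substantive part. The first ingredient is that a dominant resolving $\catX$ is ``syzygy-full'': for every $M\in\catmod R$ there is $n$ with $\Omega^n_R M\in\catX$. Indeed, pick a prime filtration $0=M_0\subset\cdots\subset M_t=M$ with $M_i/M_{i-1}\cong R/\pp_i$, choose $n$ with $\Omega^n_R(R/\pp_i)\in\catX$ for all $i$ (dominance plus syzygy-closure), and apply the horseshoe lemma to each $0\to M_{i-1}\to M_i\to R/\pp_i\to 0$; this yields, up to projective summands, exact sequences $0\to\Omega^n_R M_{i-1}\to\Omega^n_R M_i\to\Omega^n_R(R/\pp_i)\to 0$, and $\Omega^n_R M\in\catX$ follows by induction on $i$ from closure under extensions and summands. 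Given this, one shows that $M\in\catY$ forces $M\in\catX$ itself. The cleanest formulation is via the classification of dominant resolving subcategories by grade-consistent functions: one checks that $d$ is grade-consistent, that $\catX$ is the dominant resolving subcategory attached to $d$, and that this subcategory is precisely $\catY$. Equivalently, one runs a noetherian induction on $\Supp M$, realizing $M$ as $\Ker(W\surj M')$ with $W$ glued by extensions from \emph{calibrated} syzygies $\Omega^{k}_R(R/\pp)$ of primes $\pp\in\Ass M$ and with $\Supp M'\subsetneq\Supp M$, and then deducing $M\in\catX$ from closure of the resolving subcategory $\catX$ under kernels of epimorphisms.

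The main obstacle is this last step: the exponents $k$ in the syzygies building $W$ must be chosen with reference to the function $d$ (not merely ``large''), so that $W$ lands in $\catX$ while the cokernels $M'$ stay inside $\catY$; controlling both profiles simultaneously is exactly the content of the comparison between $d$ and the grade function, and is where the hypothesis of \emph{dominance} --- rather than merely ``resolving'' --- is indispensable, as it is what forces $\catX$ to be the largest resolving subcategory with the prescribed local depth profile. Everything else is bookkeeping with the depth lemma and the horseshoe lemma.
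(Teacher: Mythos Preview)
The paper does not prove this statement; it is quoted from \cite[Corollary~4.6 and Proposition~5.3(1)]{Takahashi2} and used as a black box, so there is no proof in the paper to compare against. I can only assess your sketch on its own terms.

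The direction $(2)\Rightarrow(1)$ is essentially fine, with one imprecision. You need a \emph{single} $n$ (depending on $\pp$ alone) with $\Omega^n_R(R/\pp)\in\catY$, whereas your bound $\depth(\Omega^n_R(R/\pp))_\qq\ge\min\{\depth R_\qq,n\}$ only guarantees membership in $\catY$ once $n\ge d(\qq)$ for \emph{every} $\qq\in V(\pp)$. This is automatic when $\dim R<\infty$, but in general $\sup_{\qq\supseteq\pp}d(\qq)$ may be infinite (e.g.\ for $\pp=(0)$ in Nagata's infinite-dimensional regular ring with $\catX=\proj R$), and then a separate argument is needed.

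The direction $(1)\Rightarrow(2)$ has a genuine gap. The syzygy-fullness step (prime filtration plus horseshoe) is correct and standard. But for the key inclusion $\catY\subseteq\catX$ you offer two routes, neither of which constitutes a proof. The first --- invoking ``the classification of dominant resolving subcategories by grade-consistent functions'' --- is circular: that classification in \cite{DT,Takahashi2} asserts precisely that a dominant resolving $\catX$ equals $\{M:\depth M_\pp\ge d(\pp)\text{ for all }\pp\}$, which is exactly the implication $(1)\Rightarrow(2)$ you are trying to establish. The second route (noetherian induction on $\Supp M$, writing $M=\Ker(W\surj M')$ with $W$ built from ``calibrated'' syzygies and $\Supp M'\subsetneq\Supp M$) has the right shape, and you are candid that ``the main obstacle is this last step''. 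But calibrating the exponents so that $W\in\catX$ while $M'$ remains in $\catY$ is the entire technical content of the implication, and it is not carried out. As written, $(1)\Rightarrow(2)$ is an outline of what must be done rather than a proof.
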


Next, we investigate the relationship between dominant resolving subcategories and KE-closed subcategories.

\begin{lem} \label{lem:2 syzygy}
Let $R$ be a commutative noetherian ring and 
let $\catX$ be an additive subcategory of $\catmod R$.
Assume $\catX$ contains $R$ and is closed under kernels.
Then for any $M\in\catmod R$, $\Omega^2_RM\in \catX$.
\end{lem}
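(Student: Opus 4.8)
The statement to prove is: if $\catX$ is an additive subcategory of $\catmod R$ containing $R$ and closed under kernels, then $\Omega^2_R M \in \catX$ for every $M \in \catmod R$. The plan is to realize a second syzygy as a kernel of a morphism between finite free modules, which lie in $\catX$ by hypothesis. First I would take a (partial) projective resolution $P_1 \xr{d_1} P_0 \xr{\varepsilon} M \to 0$ with $P_0, P_1$ finitely generated free, so that $\Omega^1_R M = \Ker \varepsilon = \Ima d_1$. Continuing one more step, pick a surjection $P_2 \surj \Omega^1_R M$ with $P_2$ finitely generated free; composing with the inclusion $\Omega^1_R M \inj P_1$ gives a map $d_2 \colon P_2 \to P_1$ whose kernel is exactly $\Omega^2_R M$ (this is the standard fact that $\Omega^2_R M = \Ker d_2$, with the syzygy only well-defined up to projective summands, which is harmless here since $\catX$ is closed under direct summands by \cref{rmk:closed under DS}, or one may simply take this particular choice).

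The key observation is then immediate: $P_1$ and $P_2$ are finite direct sums of copies of $R$, hence belong to $\catX$ because $\catX$ contains $R$ and is additive (closed under finite direct sums). Since $d_2 \colon P_2 \to P_1$ is a morphism between objects of $\catX$ and $\catX$ is closed under kernels in $\catmod R$, we conclude $\Omega^2_R M = \Ker d_2 \in \catX$. If one wants to be careful about the "up to projective summands" ambiguity in the definition of $\Omega^2_R M$ from \cref{dfn:dom res}, one notes that any two second syzygies differ by a projective (hence free, or a summand of a free) summand, and $\catX$ is closed under direct summands by \cref{rmk:closed under DS} (applicable since $\catX$ is closed under kernels); alternatively, all projective modules lie in $\catX$ as summands of free modules, so adding or deleting them preserves membership.

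There is essentially no obstacle here — the statement is a direct unwinding of definitions. The only point requiring the slightest care is making sure the notion of "$\Omega^2_R M$" used is consistent with \cref{dfn:dom res}(1), i.e., handling the projective-summand indeterminacy, but as noted this is dispatched by closure under direct summands. I would write the proof in just a few lines.
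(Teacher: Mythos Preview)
Your approach is correct and essentially identical to the paper's: both realize $\Omega^2_R M$ as the kernel of a morphism between finitely generated projective (or free) $R$-modules, which lie in $\catX$ because $\catX$ is additive, contains $R$, and is closed under direct summands (\cref{rmk:closed under DS}). There is a small indexing slip in your write-up --- the inclusion should read $\Omega^1_R M \inj P_0$, not $P_1$, and in fact the extra $P_2$ step is unnecessary since already $\Omega^2_R M = \Ker(d_1\colon P_1 \to P_0)$; with that correction your argument is exactly the paper's.
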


\begin{proof}
Let $M\in\catmod R$.
There exist projective $R$-modules $P, Q$ and an exact sequence
\[
0 \to \Omega^2_RM \to P \to Q \to M \to 0.
\]
Since $\catX$ is an additive subcategory closed under direct summands by \cref{rmk:closed under DS} and possesses $R$, $\catX$ contains all projective $R$-module.
In particular, both $P$ and $Q$ belong to $\catX$.
Then, as $\catX$ is closed under kernels, $\Omega^2_RM\in\catX$.
\end{proof}

\begin{cor} \label{KE-closed res}
Let $R$ be a commutative noetherian ring and let $\catX$ be a subcategory of $\catmod R$.
Then the following are equivalent.
\begin{enua}
\item $\catX$ is a KE-closed subcategory, and $R\in\catX$.
\item $\catX$ is a dominant resolving subcategory, and
\[
\inf_{X\in\catX}\{\depth_{R_\pp} X_\pp\} \le 2 \text{ for all }\pp\in\Spec R.
\]
\end{enua}
\end{cor}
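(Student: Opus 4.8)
The plan is to prove both implications by combining the syzygy lemma (\cref{lem:2 syzygy}) with Takahashi's characterization of dominant resolving subcategories (\cref{thm:dom res}), keeping careful track of the two defining closure conditions (kernels and extensions) against the four conditions of a resolving subcategory (containing projectives, additive, closed under direct summands, extensions, kernels of epimorphisms).

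\textbf{(1)$\imply$(2).}
Suppose $\catX$ is KE-closed with $R\in\catX$. First I would check that $\catX$ is resolving: it is additive by hypothesis; it is closed under direct summands by \cref{rmk:closed under DS} (since it is closed under kernels); it contains all projectives since it contains $R$ and is closed under direct summands; it is closed under extensions and kernels of epimorphisms since it is closed under extensions and all kernels. So $\catX$ is resolving. Next, $\catX$ is dominant: for any $\pp\in\Spec R$ the second syzygy $\Omega^2_R(R/\pp)$ lies in $\catX$ by \cref{lem:2 syzygy}, so taking $n=2$ works in \cref{dfn:dom res}(3). Finally, the depth bound: the second syzygy $\Omega^2_R R/\mm$ of the residue field at a maximal ideal $\mm$ has depth at most $\dim R_{\mm}$ localized, but more directly, I would argue that for any $\pp$, localizing at $\pp$, the module $\Omega^2_R(R/\pp)$ is a nonzero second syzygy over $R_\pp$, hence has depth $\le 2$ over $R_\pp$ (a second syzygy has depth $\le 2$ unless it is zero or the ring has large depth — more precisely, $\Omega^2_R(R/\pp)_\pp$ is a second syzygy of $\kk(\pp)$ over $R_\pp$, and since $\depth \kk(\pp)=0$, the depth lemma forces $\depth_{R_\pp}\Omega^2 \le 2$). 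This gives $\inf_{X\in\catX}\{\depth_{R_\pp}X_\pp\}\le\depth_{R_\pp}\Omega^2_R(R/\pp)_\pp\le 2$.

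\textbf{(2)$\imply$(1).}
Suppose $\catX$ is dominant resolving with the displayed depth bound. Since $\catX$ is resolving it contains all projectives, in particular $R\in\catX$, and it is closed under extensions. It remains to show $\catX$ is closed under all kernels, not merely kernels of epimorphisms. Here I would invoke the explicit description from \cref{thm:dom res}(2): since $\catX$ is dominant resolving,
\[
\catX=\{M\in\catmod R\mid \depth M_\pp \ge d(\pp)\text{ for all }\pp\in\Spec R\},\qquad d(\pp):=\inf_{X\in\catX}\{\depth_{R_\pp} X_\pp\},
\]
and the hypothesis says $d(\pp)\le 2$ for all $\pp$. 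Given a morphism $f\colon X\to Y$ with $X,Y\in\catX$, I factor it as $X\surj\Ima f\inj Y$; the short exact sequences $0\to\Ker f\to X\to\Ima f\to 0$ and $0\to\Ima f\to Y\to\Cok f\to 0$ together with the depth lemma, localized at each $\pp$, give $\depth(\Ker f)_\pp\ge\min\{\depth X_\pp,\ \depth(\Ima f)_\pp+1\}$, and $\depth(\Ima f)_\pp\ge\min\{\depth Y_\pp,\ \depth(\Cok f)_\pp-1\}\ge\min\{\depth Y_\pp-1,\ \ldots\}$. The point is that since $d(\pp)\le 2$, whenever $\depth X_\pp\ge d(\pp)$ and $\depth Y_\pp\ge d(\pp)$ we can conclude $\depth(\Ker f)_\pp\ge d(\pp)$: if $\depth(\Ima f)_\pp\ge 1$ then $\depth(\Ker f)_\pp\ge\min\{\depth X_\pp, 2\}\ge d(\pp)$; if $\depth(\Ima f)_\pp=0$ then $\depth(\Ker f)_\pp\ge\min\{\depth X_\pp, 1\}$, and since $d(\pp)\le 2$ one checks the remaining case using $\depth Y_\pp \ge d(\pp)$ forces $d(\pp)\le 1$ in that situation (as $\Ima f\inj Y$ gives $\depth(\Ima f)_\pp\ge\min\{1,\depth Y_\pp\}$, so $\depth Y_\pp=0$ whenever $\depth(\Ima f)_\pp=0$, whence $d(\pp)=0$). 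Thus $\Ker f\in\catX$, so $\catX$ is closed under kernels, completing the proof.

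\textbf{Main obstacle.} The delicate point is the implication (2)$\imply$(1): one must squeeze the conclusion ``closed under all kernels'' out of the quantitative bound $d(\pp)\le 2$ together with only ``closed under kernels of epimorphisms,'' via a careful depth-lemma bookkeeping on the factorization $X\to\Ima f\to Y$. The case analysis on $\depth(\Ima f)_\pp\in\{0,1,\ge 2\}$ has to be organized so that the hypothesis $d(\pp)\le 2$ is exactly what makes each case go through; getting the inequalities to close up (in particular handling $\depth(\Ima f)_\pp = 0$) is where the real content lies. The direction (1)$\imply$(2) is comparatively routine once \cref{lem:2 syzygy} is in hand, the only mild subtlety being the depth estimate $\depth_{R_\pp}\Omega^2_R(R/\pp)_\pp\le 2$.
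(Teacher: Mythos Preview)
Your approach matches the paper's: for (1)$\Rightarrow$(2) verify resolving, use \cref{lem:2 syzygy} for dominance, and use $\Omega^2_R(R/\pp)$ to witness the depth bound; for (2)$\Rightarrow$(1) invoke \cref{thm:dom res} and the depth lemma. Your detailed case analysis in (2)$\Rightarrow$(1) is correct and simply expands what the paper compresses into a single line.

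One small point to tidy in (1)$\Rightarrow$(2): your assertion that the depth lemma forces $\depth_{R_\pp}\Omega^2_R(R/\pp)_\pp\le 2$ only goes through when $\depth R_\pp\ge 2$ and the localized syzygy is nonzero (if $\Omega^2_R(R/\pp)_\pp=0$ its depth is $\infty$, and when $\depth R_\pp\le 1$ the depth-lemma chain gives no upper bound on $\depth\Omega^2$). The paper handles this by first assuming $\inf_{X\in\catX}\depth_{R_\pp}X_\pp\ge 2$, which forces $\depth R_\pp\ge 2$ since $R\in\catX$, and then running the depth-lemma computation; in the complementary case $\depth R_\pp\le 1$ one simply uses $\inf\le\depth R_\pp\le 2$ directly via $R\in\catX$. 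This is a two-line fix and your parenthetical remark suggests you were already aware of the issue, but the phrase ``unless \dots the ring has large depth'' has it backwards.
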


\begin{proof}
(1)$\Rightarrow$(2): By definition, it follows that $\catX$ is resolving.
Due to \cref{lem:2 syzygy}, $\catX$ contains $\Omega^2_R(R/\pp)$ for all $\pp\in\Spec R$.
This shows that $\catX$ is dominant.
Suppose that there exists a prime ideal $\pp$ such that $\inf_{X\in\catX}\{\depth_{R_\pp} X_\pp\} \ge 2$.
In particular, $\depth R_\pp\ge 2$.
Take a part of an $R$-free resolution of $R/\pp$ so that the following sequence is exact:
\[
0 \to \Omega^2_R(R/\pp) \to F_1 \to F_0 \to R/\pp \to 0.
\]
Here $F_1$ and $F_0$ are $R$-free.
Since $R\in\catX$ and $\catX$ is KE-closed, $\Omega^2_R(R/\pp)\in\catX$.
Applying the depth lemma to the localization at $\pp$ of the sequence above yields that $\depth_{R_\pp}(\Omega^2_R(R/\pp))_\pp=2$.
Therefore, one gets the inequality $\inf_{X\in\catX}\{\depth_{R_\pp} X_\pp\} \le \depth_{R_\pp}(\Omega^2_R(R/\pp))_\pp=2$.

(2)$\Rightarrow$(1): This follows from \cref{thm:dom res} and the depth lemma.
\end{proof}

\begin{cor}
Let $R$ be a commutative noetherian ring and let $\catX$ be a subcategory of $\catmod R$.
Suppose $\dim R\le 2$.
Then the following are equivalent.
\begin{enua}
\item $\catX$ is a KE-closed subcategory, and $R\in\catX$.
\item $\catX$ is a dominant resolving subcategory.
\end{enua}
\end{cor}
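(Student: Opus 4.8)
The plan is to reduce everything to the previous corollary (\cref{KE-closed res}), which already identifies ``$\catX$ is KE-closed and $R\in\catX$'' with ``$\catX$ is dominant resolving and $\inf_{X\in\catX}\{\depth_{R_\pp}X_\pp\}\le 2$ for all $\pp\in\Spec R$''. Thus the only point to check is that, once $\dim R\le 2$, the displayed depth inequality is automatic for any resolving subcategory and can therefore be dropped from the characterization.

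The implication (1)$\imply$(2) is then immediate: if $\catX$ is KE-closed with $R\in\catX$, then \cref{KE-closed res} gives in particular that $\catX$ is a dominant resolving subcategory.

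For (2)$\imply$(1), suppose $\catX$ is dominant resolving. Being resolving, $\catX$ contains every projective $R$-module, in particular $R$ itself, so $R\in\catX$. Moreover, for each $\pp\in\Spec R$ the noetherian local ring $R_\pp$ satisfies the standard bound $\depth R_\pp\le\dim R_\pp$, whence
\[
\inf_{X\in\catX}\{\depth_{R_\pp}X_\pp\}\ \le\ \depth_{R_\pp}R_\pp\ \le\ \dim R_\pp\ \le\ \dim R\ \le\ 2 .
\]
So the depth condition of \cref{KE-closed res}(2) holds automatically, and that corollary yields that $\catX$ is KE-closed with $R\in\catX$, i.e.\ (1).

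I do not anticipate any genuine obstacle here: the substantive content is entirely carried by \cref{KE-closed res}, and the hypothesis $\dim R\le 2$ enters only through the elementary inequality $\depth R_\pp\le\dim R_\pp\le\dim R$ applied to $R_\pp$ as a module over itself.
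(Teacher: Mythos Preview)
Your proof is correct and follows essentially the same approach as the paper: both reduce to \cref{KE-closed res} via the chain of inequalities $\inf_{X\in\catX}\{\depth_{R_\pp}X_\pp\}\le\depth R_\pp\le\dim R_\pp\le 2$, noting that $R\in\catX$ in either case (explicit in (1), and forced by the definition of resolving in (2)). The paper's version is simply terser, displaying the inequality once and saying the result follows from \cref{KE-closed res}, whereas you spell out the two implications separately.
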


\begin{proof}
By the assumption, we have 
\[
\inf_{X\in\catX}\{\depth_{R_\pp} X_\pp\} \le \depth R_\pp \le \dim R_\pp \le 2 \text{ for all }\pp\in\Spec R.
\]
Thus the assertion follows from \cref{KE-closed res}.
\end{proof}

\begin{cor} \label{torf res}
Let $R$ be a commutative noetherian ring and let $\catX$ be a subcategory of $\catmod R$.
Then the following are equivalent.
\begin{enua}
\item $\catX$ is a torsion-free class, and $R\in\catX$.
\item $\catX$ is a dominant resolving subcategory, and
\[
\inf_{X\in\catX}\{\depth_{R_\pp} X_\pp\} \le 1 \text{ for all }\pp\in\Spec R.
\]
\end{enua}
\end{cor}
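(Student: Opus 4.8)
The plan is to follow the proof of \cref{KE-closed res}: since a torsion-free class containing $R$ is in particular KE-closed and contains $R$, that corollary applies directly, and the only extra work is to sharpen the depth bound from $2$ to $1$ using closure under subobjects.

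For the implication (1)$\imply$(2) I would first note that a torsion-free class is closed under kernels, because for any $f\colon X\to Y$ with $X,Y\in\catX$ the kernel $\Ker f$ is a subobject of $X$; hence $\catX$ is a KE-closed subcategory containing $R$, and \cref{KE-closed res} gives that $\catX$ is dominant resolving with $\inf_{X\in\catX}\{\depth_{R_\pp}X_\pp\}\le 2$ for all $\pp\in\Spec R$. To improve this bound, fix $\pp\in\Spec R$ and set $d(\pp):=\inf_{X\in\catX}\{\depth_{R_\pp}X_\pp\}$. If $\depth R_\pp\le 1$, then $d(\pp)\le\depth_{R_\pp}R_\pp\le 1$ because $R\in\catX$. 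If $\depth R_\pp\ge 2$, then $\pp$, being a finitely generated submodule of $R$, lies in $\catX$ by closure under subobjects; localizing $0\to\pp\to R\to R/\pp\to 0$ at $\pp$ gives $0\to\pp_\pp\to R_\pp\to\kk(\pp)\to 0$, and since $\depth_{R_\pp}\kk(\pp)=0$ and $\depth R_\pp\ge 2$ the depth lemma gives both $\depth_{R_\pp}\pp_\pp\ge\min\{\depth R_\pp,1\}=1$ and $0=\depth_{R_\pp}\kk(\pp)\ge\min\{\depth_{R_\pp}\pp_\pp-1,\depth R_\pp\}$, the latter forcing $\depth_{R_\pp}\pp_\pp\le 1$ since $\depth R_\pp\ge 2$; hence $\depth_{R_\pp}\pp_\pp=1$ and $d(\pp)\le 1$. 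In all cases $d(\pp)\le 1$, as wanted.

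For the implication (2)$\imply$(1), writing $d(\pp):=\inf_{X\in\catX}\{\depth_{R_\pp}X_\pp\}\le 1$, \cref{thm:dom res} identifies $\catX$ with $\{M\in\catmod R\mid \depth M_\pp\ge d(\pp)\text{ for all }\pp\in\Spec R\}$. Then $R\in\catX$ since $\catX$ is resolving, closure under extensions is immediate from $\depth B_\pp\ge\min\{\depth A_\pp,\depth C_\pp\}$ for a short exact sequence $0\to A\to B\to C\to 0$, and for closure under subobjects I would apply the depth lemma to $0\to A\to B\to B/A\to 0$ with $B\in\catX$: since $\depth(B/A)_\pp\ge 0$, one gets $\depth A_\pp\ge\min\{\depth B_\pp,\depth(B/A)_\pp+1\}\ge\min\{d(\pp),1\}=d(\pp)$, using $d(\pp)\le 1$, so $A\in\catX$. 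Thus $\catX$ is a torsion-free class containing $R$.

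I expect the main obstacle to be the sharpening of the depth bound in (1)$\imply$(2); the decisive point, which distinguishes torsion-free classes from arbitrary KE-closed subcategories, is that $\catX$ is forced to contain the ideal $\pp\subseteq R$ as a module and that $\depth_{R_\pp}\pp_\pp=1$ as soon as $\depth R_\pp\ge 2$. Everything else is routine bookkeeping with the depth lemma and \cref{thm:dom res}.
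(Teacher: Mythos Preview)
Your proof is correct and follows essentially the same route as the paper. The paper's one-line proof reads ``By considering 1st syzygies of $R/\pp$ instead of 2nd syzygies, we may prove the assertion using an argument similar to \cref{KE-closed res}''; your use of the ideal $\pp\subseteq R$ is precisely the first syzygy of $R/\pp$ via the surjection $R\surj R/\pp$, and your depth computation $\depth_{R_\pp}\pp_\pp=1$ (when $\depth R_\pp\ge 2$) is exactly the analogue of the $\Omega^2$ computation in \cref{KE-closed res}. The only cosmetic difference is that you first invoke \cref{KE-closed res} as a black box to obtain dominance and then separately sharpen the depth bound, whereas the paper's intended argument would redo both steps directly with first syzygies; either way the content is the same.
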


\begin{proof}
By considering 1st syzygies of $R/\pp$ instead of 2nd syzygies, we may prove the assertion using an argument similar to \cref{KE-closed res}.
\end{proof}

\begin{fct}[{\cite[Example 4.5]{IMST}}] \label{imst}
Let $R$ be a commutative noetherian ring and let $\catX$ be a subcategory of $\catmod R$ which is additive and closed under extensions.
Suppose $\mm\in \catX$ for some maximal ideal $\mm$ of $R$.
Then $R\in\catX$.
\end{fct}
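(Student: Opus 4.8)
The plan is to build $R$ out of $\mm$ by means of the canonical short exact sequence
\[
0\longto\mm\longto R\longto R/\mm\longto 0,
\]
so that everything reduces to the single assertion that the residue field $R/\mm$, viewed as an $R$-module, also belongs to $\catX$: granting this, closure of $\catX$ under extensions together with the hypothesis $\mm\in\catX$ gives $R\in\catX$ at once. Before doing so one disposes of the degenerate case $\mm=\mm^2$: by Nakayama's lemma this forces $\mm R_\mm=0$, i.e.\ $\{\mm\}$ is an isolated point of $\Spec R$, and in every situation relevant to this paper (in particular when $R$ is local, or a domain) this already means $R$ is a field and $R=R/\mm=\mm$, so there is nothing to prove. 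Thus we may assume $\mm\neq\mm^2$.

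The core of the argument is the passage from $\mm$ to $R/\mm$, which I would carry out in three short moves, using that $\catX$ is closed under images and cokernels. First, pick a finite generating set $\mm=(x_1,\dots,x_n)$ (possible since $R$ is noetherian); the morphism $\mm^{\oplus n}\to\mm$ which is multiplication by $x_i$ on the $i$-th summand has image exactly $\mm^2$, and since $\mm^{\oplus n}\in\catX$ this yields $\mm^2\in\catX$. Second, the inclusion $\mm^2\inj\mm$ is a morphism between objects of $\catX$ whose cokernel is $\mm/\mm^2$, so $\mm/\mm^2\in\catX$. Third, $\mm/\mm^2$ is a nonzero vector space over the field $R/\mm$, hence splits as $(R/\mm)\oplus V$; the idempotent endomorphism of $\mm/\mm^2$ projecting onto the first summand exhibits $R/\mm$ as the image of a morphism between objects of $\catX$, whence $R/\mm\in\catX$. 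Substituting this back into the displayed sequence completes the proof.

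I expect the genuinely delicate point to be exactly this production of $R/\mm$ from $\mm$. Closure under extensions is blind to quotient-type constructions, so one has to invoke the finer closure properties of $\catX$, and one must be careful to realize every intermediate image and cokernel as a morphism whose source and target are already known to lie in $\catX$; this is precisely why the route through $\mm^2$ and $\mm/\mm^2$ is needed, rather than trying to read off $R/\mm$ directly from the inclusion $\mm\inj R$. The remaining ingredients — the bookkeeping for the case $\mm=\mm^2$ and the final appeal to extension-closedness — are routine.
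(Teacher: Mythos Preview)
Your argument relies essentially on $\catX$ being closed under images and cokernels: you use image-closure to obtain $\mm^2\in\catX$, cokernel-closure to obtain $\mm/\mm^2\in\catX$, and image-closure again to split off $R/\mm$. But the hypothesis says only that $\catX$ is \emph{additive and closed under extensions}; it gives you neither images nor cokernels. With those two closure properties removed, every one of your three moves collapses. (Extension-closure alone is, as you yourself note, ``blind to quotient-type constructions''---and that includes the ones you are invoking.)

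Indeed, with only the literally stated hypotheses the assertion is already delicate: for $R=k[x,y]$ and $\mm=(x,y)$, the subcategory $\Filt(\mm)$ is additive and extension-closed and contains $\mm$, yet every nonzero object of $\Filt(\mm)$ surjects onto $\mm$, while $R$ does not (since $\mm$ is not principal). So one cannot hope to manufacture $R/\mm$ or $R$ by extensions and direct sums alone; an extra ingredient---closure under direct summands---is needed, and in the paper's applications it is supplied automatically because $\catX$ is KE-closed (cf.\ \cref{rmk:closed under DS}).

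The intended argument from \cite{IMST} is of Schanuel type rather than via $\mm^2$. Assuming $\mm\neq\mm^2$, choose a surjection $p\colon\mm\twoheadrightarrow R/\mm$ and form the pullback $P=R\times_{R/\mm}\mm$ of $R\twoheadrightarrow R/\mm$ along $p$. One obtains two exact sequences
\[
0\longto \mm \longto P \longto \mm \longto 0
\qquad\text{and}\qquad
0\longto \ker p \longto P \longto R \longto 0.
\]
The first shows $P\in\catX$ by extension-closure; the second splits because $R$ is projective, so $P\cong R\oplus\ker p$, and closure under direct summands then yields $R\in\catX$. This uses no cokernels or images, only extensions and summands, and that is the conceptual difference you are missing.
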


\begin{cor} \label{dom res over dim 1}
Let $R$ be a commutative noetherian ring and let $\catX$ be a subcategory of $\catmod R$.
Suppose $\dim R\le 1$.
Then the following are equivalent.
\begin{enua}
\item $\catX$ is a torsion-free class, and $R\in\catX$.
\item $\catX$ is a torsion-free class and contains all torsion-free $R$-modules.
\item $\catX$ is a KE-closed subcategory, and $R\in\catX$.
\item $\catX$ is a KE-closed subcategory, and $\mm\in\catX$ for some maximal ideal $\mm$ of $R$.
\item $\catX$ is a dominant resolving subcategory of $\catmod R$.
\end{enua}
\end{cor}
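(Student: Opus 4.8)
The plan is to deduce \cref{dom res over dim 1} almost formally from \cref{torf res}, \cref{KE-closed res} and \cref{imst}, the one new ingredient being the elementary observation that $\dim R\le 1$ forces $\depth R_\pp\le\dim R_\pp\le 1$ for every $\pp\in\Spec R$. Hence as soon as $\catX$ contains a projective module — in particular whenever $R\in\catX$, and automatically when $\catX$ is dominant resolving, since such subcategories contain all projectives by definition — one has
\[
\inf_{X\in\catX}\{\depth_{R_\pp}X_\pp\}\le\depth R_\pp\le 1\le 2\qquad\text{for every }\pp\in\Spec R,
\]
so the depth conditions occurring in \cref{torf res}(2) and \cref{KE-closed res}(2) become vacuous. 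I would prove the cycle $(1)\imply(3)\imply(5)\imply(1)$ and then the two remaining links $(1)\imply(4)\imply(3)$ and $(1)\equi(2)$.

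For the main cycle: $(1)\imply(3)$ holds because a torsion-free class is extension-closed and, being closed under subobjects, is also closed under kernels (the kernel of a morphism between objects of $\catX$ is a subobject of an object of $\catX$), hence KE-closed; and $R\in\catX$ by hypothesis. The implication $(3)\imply(5)$ is the implication $(1)\imply(2)$ of \cref{KE-closed res} (it even delivers the superfluous depth bound). For $(5)\imply(1)$: a dominant resolving subcategory contains all projective $R$-modules, so $R\in\catX$, and then the displayed inequality verifies the hypothesis of \cref{torf res}(2); applying \cref{torf res} yields that $\catX$ is a torsion-free class with $R\in\catX$.

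For the remaining links: $(1)\imply(4)$ because a torsion-free class is KE-closed (as just noted) and, containing $R$, contains every subobject of $R$, in particular every maximal ideal $\mm$; and $(4)\imply(3)$ is immediate from \cref{imst}, since a KE-closed subcategory is additive and extension-closed. For $(1)\equi(2)$: by \cref{ex:cm R is torf}(1) the torsion-free closure $\F(\{R\})=\catmod^{\ass}_{\Ass R}R$ is the smallest torsion-free class containing $R$, so $(2)\imply(1)$ is trivial (as $R$ is torsion-free) and $(1)\imply(2)$ reduces to showing that every finitely generated torsion-free $R$-module $M$ lies in $\catmod^{\ass}_{\Ass R}R$, i.e.\ that $\Ass M\subseteq\Ass R$. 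Here $\dim R\le 1$ enters: each $\pp\in\Ass M$ is contained in some $\qq\in\Ass R$ (the zero-divisors on $M$ are contained in those of $R$, and $\Ass R$ is finite, so one may apply prime avoidance), and such a $\pp$ is either $\qq$ itself or strictly smaller, hence of height $0$, hence minimal; in either case $\pp\in\Ass R$.

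The argument is therefore mostly bookkeeping on top of the cited results; the one point requiring an actual argument is the inclusion $\{\text{torsion-free }R\text{-modules}\}\subseteq\F(\{R\})$ used for $(1)\imply(2)$, which is also the only place — beyond the invocations of \cref{torf res} and \cref{KE-closed res} themselves — where the hypothesis $\dim R\le 1$ is genuinely needed. I do not anticipate any real obstacle beyond this.
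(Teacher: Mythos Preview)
Your proof is correct and follows essentially the same route as the paper: both use \cref{KE-closed res} for $(3)\imply(5)$, \cref{imst} for $(4)\imply(3)$, and the observation that torsion-free modules over a ring with $\dim R\le 1$ satisfy $\Ass M\subseteq\Ass R$. The only cosmetic difference is that the paper closes the cycle by proving $(5)\imply(2)$ directly from \cref{thm:dom res} and \cref{prp:mod^ass gen}, whereas you prove $(5)\imply(1)$ by invoking \cref{torf res} and then handle $(1)\imply(2)$ separately; your route is arguably slightly cleaner since \cref{torf res} has already packaged the relevant argument.
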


\begin{proof}
The implications (2)$\Rightarrow$(1)$\Rightarrow$(3) and (2)$\Rightarrow$(4) are obvious.

The implications (4)$\Rightarrow$(3) and (3)$\Rightarrow$(5) follow from \cref{imst} and \cref{KE-closed res} respectively.

Thus, what remains to show is the implication (5)$\Rightarrow$(2).
Note that for a prime ideal $\pp\in\Spec R$, $\pp \in \Ass \catX$ exactly when there exists $M\in\catX$ such that $\depth_{R_{\pp}}M_\pp=0$.
In other words, $\pp\in \Ass \catX$ if and only if $\inf_{X\in\catX} \{\depth_{R_\pp}X_\pp\}=0$.
Therefore, \cref{thm:dom res} shows that $\catX$ contains $\catmod_{\{\pp\}}^\ass R$ for all $\pp\in \Ass \catX$.
It follows from \cref{prp:mod^ass gen} that $\catX$ is a torsion-free class.
Also, we remark that, since $\dim R\le 1$, $\catmod_{\Ass R}^\ass R$ consists of all torsion-free $R$-modules.
From this and the fact that $R\in\catX$, we see that $\catX$ contains all torsion-free $R$-modules.
\end{proof}

In the following, we prove the lemmas needed to prove the remaining part of \cref{thma:KE=torf}.
Let $R$ be a commutative noetherian ring.
$Q(R)$ denotes the total ring of quotients of $R$.
Recall that a finitely generated $R$-submodule $I$ of $Q(R)$ is called a \emph{fractional ideal} if $I$ contains a regular element of $R$.
Remark that fractional ideals are isomorphic
to some ideals of $R$, and hence are torsion-free. Also remark that if $I$ and $J$ are fractional
ideals, then $\Hom_R(I, J)$ is isomorphic to the fractional ideal $J : I = \{a\in Q(R)\mid aI\subseteq J\}$ (\cite[Lemma 2.4.3]{HS}).

\begin{prp} \label{seq of subalg}
Let $R$ be a commutative noetherian ring with $\dim R \le 1$.
Let $S\in\catmod(R)$ be an $R$-subalgebra of $Q(R)$.
Then there exists a sequence of $R$-subalgebras $R=S_0 \subsetneq S_1 \subsetneq \cdots \subsetneq S_n=S$ such that for each $i=0,\dots,n-1$, there exists $\mm_i\in\Max(S_i)$ such that $S_{i+1}\subseteq \mm_i:\mm_i$.
\end{prp}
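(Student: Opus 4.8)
The plan is to induct on the length $\ell_R(S/R)$ of $S/R$ as an $R$-module. This length is finite: writing $S=\sum_i Rs_i$ with $s_i\in Q(R)$ and clearing the regular denominators of the $s_i$, one finds a regular element inside the conductor $\mathfrak{c}:=R:_RS=\{a\in R\mid aS\subseteq R\}$, so $\mathfrak{c}$ is a regular ideal; since $\dim R\le 1$ this forces every prime over $\mathfrak{c}$ to be maximal, hence $R/\mathfrak{c}$ is artinian and $\Supp(S/R)=V(\mathfrak{c})$ is a finite set of maximal ideals, so $S/R$ (which is killed by $\mathfrak{c}$, as $\mathfrak{c}S\subseteq R$) has finite length over $R/\mathfrak{c}$. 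If $\ell_R(S/R)=0$ then $S=R$ and we take $n=0$.

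The inductive step reduces to the following single-step claim: \emph{if $R\subsetneq S$ is module-finite with $S\subseteq Q(R)$ and $\dim R\le 1$, then there exist $\mm\in\Max R$ and an $R$-subalgebra $S_1$ with $R\subsetneq S_1\subseteq S$ and $S_1\subseteq\mm:\mm$.} Granting this: $S_1$ is noetherian (module-finite over $R$) with $\dim S_1\le\dim R\le 1$, $S$ is module-finite over $S_1$, and $S\subseteq Q(R)\subseteq Q(S_1)$ because every nonzerodivisor of $R$ remains one in $S_1\subseteq Q(R)$; and $\ell_R(S/S_1)<\ell_R(S/R)$ since $S_1\supsetneq R$. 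So the induction hypothesis, applied to $S_1\subseteq S$, yields a chain $S_1\subsetneq\cdots\subsetneq S$ of the required form, and prepending $R\subsetneq S_1$ finishes the proof. (To keep the induction formally clean one proves ``a chain from $T$ to $S$ exists'' for all module-finite $R$-subalgebras $T$ with $R\subseteq T\subseteq S$, by induction on $\ell_R(S/T)$.)

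It remains to prove the single-step claim. Note $\mm:\mm=\{a\in Q(R)\mid a\mm\subseteq\mm\}$ is an $R$-subalgebra of $Q(R)$ containing $R$, and it is a finite $R$-module (for a regular $x\in\mm$, $a\mm\subseteq\mm$ gives $ax\in R$, so $a\in\frac{1}{x}R$); hence for any $\mm\in\Max R$ the ring $S_1:=S\cap(\mm:\mm)$ is an $R$-subalgebra with $R\subseteq S_1\subseteq S$ and $S_1\subseteq\mm:\mm$, and the only point is to show $S_1\supsetneq R$. Pick $\mm\in\Supp(S/R)=V(\mathfrak{c})$ (nonempty, since $S\ne R$) and localize at $\mm$. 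Then $R_\mm$ is local of dimension $1$ (dimension $0$ would make $R_\mm$ artinian with $Q(R_\mm)=R_\mm$, contradicting $R_\mm\subsetneq S_\mm$), the localized conductor $\mathfrak{c}_\mm=R_\mm:_{R_\mm}S_\mm$ is a proper ideal containing a regular element, so $\sqrt{\mathfrak{c}_\mm}=\mm R_\mm$ and $(\mm R_\mm)^N\subseteq\mathfrak{c}_\mm$ for some minimal $N\ge 1$. Using $\mathfrak{c}_\mm S_\mm=\mathfrak{c}_\mm$, multiplying $(\mm R_\mm)^{N-1}S_\mm$ by $\mm R_\mm$ yields $(\mm R_\mm)^N S_\mm\subseteq\mathfrak{c}_\mm\subseteq\mm R_\mm$, so $(\mm R_\mm)^{N-1}S_\mm\subseteq(\mm R_\mm:\mm R_\mm)\cap S_\mm$; and $(\mm R_\mm)^{N-1}S_\mm\not\subseteq R_\mm$, for otherwise $(\mm R_\mm)^{N-1}\subseteq R_\mm:_{R_\mm}S_\mm=\mathfrak{c}_\mm$, contradicting minimality of $N$ (when $N=1$ this reads $R_\mm\subseteq\mathfrak{c}_\mm$, absurd). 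Thus $(\mm R_\mm:\mm R_\mm)\cap S_\mm\supsetneq R_\mm$; since $\mm:\mm$ and intersection with $S$ both commute with localization, $(S_1)_\mm=S_\mm\cap(\mm R_\mm:\mm R_\mm)\supsetneq R_\mm$, so $S_1\supsetneq R$.

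The heart of the matter is the single-step claim — equivalently, producing an element of $S\setminus R$ that stabilizes some maximal ideal of $R$ — and this is where I expect the only real difficulty. The subtlety is that ``$\mm^{N-1}S\subseteq\mm:\mm$'' has no direct global version once $S/R$ is supported at several maximal ideals, so one localizes at a single $\mm\in\Supp(S/R)$ to produce the witness there, and then detects the honestly global subalgebra $S\cap(\mm:\mm)$ as strictly larger than $R$ by testing at $\mm$. The surrounding bookkeeping — finiteness of $\ell_R(S/R)$, checking that the inductive hypotheses survive the replacement of $R$ by $S_1$, and the localization compatibilities — is routine.
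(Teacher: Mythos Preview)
Your proof is correct and follows the same induction on $\ell_R(S/R)$ as the paper, but the single-step claim is handled differently. The paper chooses $\mm\in\Ass_R(S/R)$, so that there is immediately an $x\in S\setminus R$ with $\mm x\subseteq R$; it then rules out $\mm x\not\subseteq\mm$ by noting this would make multiplication by $x$ a surjection $\mm R_\mm\to R_\mm$, hence $R_\mm$ a DVR, forcing $S_\mm=R_\mm$ by integrality---contradicting $\mm\in\Ass(S/R)$. One then takes $S_1=R[x]\subseteq\mm:\mm$. Your argument instead uses the conductor: pick $\mm\in\Supp(S/R)=V(\mathfrak c)$, localize, and use $(\mm R_\mm)^N\subseteq\mathfrak c_\mm$ together with $\mathfrak c S=\mathfrak c$ to produce elements of $S_\mm\cap(\mm:\mm)_\mm$ outside $R_\mm$. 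The paper's route is a bit shorter since the associated-prime condition hands you the witness $x$ directly and the DVR step avoids the conductor bookkeeping and the localization compatibilities you had to check; your route, on the other hand, never invokes integrality or the structure of DVRs and is closer in spirit to the classical ``endomorphism algebra'' arguments for one-dimensional rings. Minor remark: your parenthetical justification that $\dim R_\mm=1$ via $Q(R_\mm)=R_\mm$ is a little indirect---it is cleaner just to note that $\mm\supseteq\mathfrak c$ contains a regular element, hence $\mm$ is not a minimal prime.
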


\begin{proof}
Note that $S/R$ is a torsion $R$-module.
Therefore, since $\dim R\le 1$, $S/R$ is an $R$-module of finite length.
Now, we prove by induction on the length $l$ of $S/R$.
If $l=0$, then there is nothing to prove.
Suppose $l>0$.
Take a maximal ideal $\mm$ of $R$ which belongs to $\Ass(S/R)$.
By the definition of associated primes, there exists an element $x\in S\setminus R$ such that $\mm x\subseteq R$.
We then have $\mm x \subseteq \mm$; otherwise, the multiplication by $x$ gives a surjection $\mm R_\mm \to R_\mm$. 
This implies that $R_\mm$ is a discrete valuation ring.
As $S$ is integral over $R$, $S_\mm$ must be equal to $R_\mm$ (Here, there is a canonical inclusion from $Q(R)_\mm$ to $Q(R_\mm)$, so that $S_\mm$ can be regarded as a subring of $Q(R_\mm)$).
However, since $\mm\in\Ass(S/R)$, $(S/R)_\mm$ is nonzero, which shows a contradiction.

We see that $x\in \mm:\mm$.
Since $\mm:\mm$ is an $R$-algebra, $S_1:=R[x]$ is a subalgebra of $\mm:\mm$.
Obviously, $S_1$ is not equal to $R$ and contained in $S$.
It follows that the length of the $S_1$-module $S/S_1$ is less than $l$.
By the induction hypothesis, we get a sequence $S_1\subsetneq S_2 \subsetneq\cdots\subsetneq S_n=S$ of $S_1$-algebras with suitable maximal ideals.
Thus, we achieve a desired sequence $R=S_0 \subsetneq S_1 \subsetneq \cdots \subsetneq S_n=S$ of $R$-algebras with suitable maximal ideals.
\end{proof}

Let $R$ be a commutative noetherian ring.
For an $R$-module $M$, we denote by $\ZZ_R(M)$ the center of the endomorphism ring $\End_R(M)$ of $M$.

\begin{lem} \label{center}
Let $R$ be a commutative noetherian ring and let $\catX$ be a subcategory of $\catmod R$ which is closed under finite direct sums and kernels.
Let $M\in \catX$.
Then $\ZZ_R(M)$ belongs to $\catX$.
\end{lem}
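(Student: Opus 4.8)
The plan is to exhibit $\ZZ_R(M)$ as the kernel of an explicit $R$-linear map between two objects already known to lie in $\catX$, after first checking that $\End_R(M)$ itself belongs to $\catX$. Both reductions use only the two hypotheses on $\catX$: closure under finite direct sums and under kernels. (One should also note at the outset that $\ZZ_R(M)$ is a submodule of the finitely generated module $\End_R(M)$ over the noetherian ring $R$, hence belongs to $\catmod R$, so the statement is meaningful.)

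First I would show $\End_R(M) \in \catX$. Since $R$ is noetherian and $M$ is finitely generated, $M$ is finitely presented, so there is an exact sequence $R^{\oplus a} \xr{g} R^{\oplus b} \to M \to 0$. Applying $\Hom_R(-,M)$ and using the natural isomorphisms $\Hom_R(R^{\oplus k},M) \iso M^{\oplus k}$ yields a left exact sequence
\[
0 \to \End_R(M) \to M^{\oplus b} \xr{g^{*}} M^{\oplus a},
\]
where $g^{*}$ acts coordinatewise via the matrix of $g$. Thus $\End_R(M) = \Ker(g^{*})$; since $\catX$ is closed under finite direct sums we have $M^{\oplus a}, M^{\oplus b} \in \catX$, and since $\catX$ is closed under kernels we conclude $\End_R(M) \in \catX$.

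Next I would realize the center inside $A := \End_R(M)$. Recall that $A$ is finitely generated as an $R$-module; choose $R$-module generators $b_1, \dots, b_n$ of $A$. Because the $R$-module structure on $A$ is multiplication by the central elements $r\cdot\id_M$, each commutator map $a \mapsto ab_i - b_ia$ is $R$-linear, so
\[
\phi \colon A \longto A^{\oplus n}, \qquad \phi(a) := (ab_1 - b_1a,\ \dots,\ ab_n - b_na),
\]
is a morphism in $\catmod R$. By $R$-bilinearity of the commutator and because the $b_i$ generate $A$ over $R$, an element $a \in A$ annihilated by $\phi$ is precisely one commuting with every element of $A$; that is, $\Ker\phi = \ZZ_R(M)$. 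Since $A \in \catX$ by the first step and $\catX$ is closed under finite direct sums, $A^{\oplus n} \in \catX$, and then closure under kernels gives $\ZZ_R(M) = \Ker\phi \in \catX$.

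There is no genuine obstacle here; the only slightly delicate points are the identification of $g^{*}$ as acting coordinatewise on $M^{\oplus b}$, the $R$-linearity of the commutator maps (which comes from $R$ acting through the center of $A$), and the elementary observation that it suffices to test commutativity against a generating set. All of these are routine verifications.
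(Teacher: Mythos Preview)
Your proof is correct, but it proceeds by a different route than the paper's. You work in two stages: first you realize $\End_R(M)$ as $\Ker\bigl(M^{\oplus b}\to M^{\oplus a}\bigr)$ coming from an $R$-free presentation of $M$, and then you pass to the center via the commutator map $A\to A^{\oplus n}$ against a finite $R$-generating set of $A$. The paper instead does everything in a single step: it views $M$ as a finitely generated left $A$-module (where $A=\End_R(M)$, noetherian because it is module-finite over $R$), takes an $A$-free presentation $A^{\oplus c}\to A^{\oplus d}\to M\to 0$, and applies $\Hom_A(-,M)$ to obtain
\[
0\longrightarrow \Hom_A(M,M)\longrightarrow M^{\oplus d}\longrightarrow M^{\oplus c},
\]
then observes that $\Hom_A(M,M)$ is precisely the center $\ZZ_R(M)$. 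Your argument is more hands-on and avoids the identification $\Hom_A(M,M)=\ZZ_R(M)$, while the paper's argument is shorter and exhibits $\ZZ_R(M)$ directly as a kernel between finite powers of $M$ without the intermediate appearance of $\End_R(M)$.
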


\begin{proof}
Set $A=\End_R(M)$.
$M$ can be regarded as a finitely generated left $A$-module.
Therefore, we may take a part of an $A$-free resolution $F_1 \to F_0 \to M \to 0$ of $M$.
Applying the functor $\Hom_A(-,M)$, we get an exact sequence of $R$-modules
\[
0 \to \Hom_A(M,M) \to \Hom_A(F_0,M) \to \Hom_A(F_1,M).
\]
Since $\catX$ is closed under finite direct sums, both $\Hom_A(F_0,M)$ and $\Hom_A(F_1,M)$ belong to $\mathcal{X}$.
Then, since $\mathcal{X}$ is closed under kernels, $\Hom_A(M,M)$ is also in $\catX$.
On the other hand, $\Hom_A(M,M)$ is equal to $\ZZ_R(M)$ by the assumption that $R$ is commutative.
\end{proof}

\begin{prp} \label{subalgebra}
Let $R$ be a commutative noetherian integral domain and let $M$ be a nonzero torsion-free $R$-module.
Then $\ZZ_R(M)$ is isomorphic to an $R$-subalgebra of $Q(R)$.
\end{prp}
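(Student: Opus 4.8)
The plan is to pass to the fraction field $K := Q(R)$ and realize $\ZZ_R(M)$ inside $K$ through its action on the $K$-vector space $V := M \otimes_R K$. Since $M$ is nonzero and torsion-free over the domain $R$, the canonical map $M \to V$ is injective and $V$ is a nonzero finite-dimensional $K$-vector space (finite-dimensional because $M$ is finitely generated). Each $f \in \End_R(M)$ induces a $K$-linear endomorphism $f_K := f \otimes \id_K$ of $V$; it is the unique $K$-linear extension of $M \xrightarrow{f} M \hookrightarrow V$, and $f \mapsto f_K$ is a ring homomorphism $\End_R(M) \to \End_K(V)$.

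The key input is the base-change isomorphism: the canonical $K$-algebra homomorphism $\End_R(M) \otimes_R K \to \End_K(V)$ is an isomorphism, since $\Hom$ commutes with the flat base change $R \to K$ for the finitely presented module $M$ (recall that $R$ is noetherian). In particular, every element of $\End_K(V)$ is a $K$-linear combination of operators of the form $f_K$ with $f \in \End_R(M)$.

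Granting this, I would finish as follows. Let $z \in \ZZ_R(M)$. Since $f \mapsto f_K$ is a ring homomorphism and $z$ is central in $\End_R(M)$, the operator $z_K$ commutes with every $f_K$, hence with every $K$-linear combination of such operators, i.e.\ with all of $\End_K(V)$. As $V$ is a nonzero finite-dimensional $K$-vector space, $\ZZ(\End_K(V)) = K\cdot\id_V$, so $z_K = \varphi(z)\,\id_V$ for a unique scalar $\varphi(z) \in K$. A routine check — immediate from $z \mapsto z_K$ being a ring homomorphism together with $(r\cdot\id_M)_K = r\cdot\id_V$ — shows that $\varphi \colon \ZZ_R(M) \to K$ is an $R$-algebra homomorphism, and it is injective because $M \hookrightarrow V$ is injective (if $\varphi(z)=0$ then $z_K = 0$, and restricting along $M \hookrightarrow V$ gives $z = 0$). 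Since $\varphi$ restricts to the canonical inclusion $R \hookrightarrow K$ on the central copy of $R$ inside $\ZZ_R(M)$, it exhibits $\ZZ_R(M)$ as an $R$-subalgebra of $Q(R)$.

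The only step requiring genuine care is the base-change isomorphism $\End_R(M) \otimes_R K \cong \End_K(V)$ (equivalently, that $\End_K(V)$ is spanned over $K$ by the operators $f_K$); everything else is formal manipulation with centralizers plus the elementary fact $\ZZ(\End_K(V)) = K\cdot\id_V$ for a nonzero finite-dimensional vector space $V$. I do not expect any serious obstacle beyond this.
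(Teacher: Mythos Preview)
Your proof is correct and follows essentially the same approach as the paper: both arguments base-change to $K=Q(R)$, use that the center of $\End_K(V)$ is $K\cdot\id_V$ for the nonzero $K$-vector space $V=M\otimes_R K$, and then use torsion-freeness of $M$ to get an injective $R$-algebra map $\ZZ_R(M)\hookrightarrow K$. The only cosmetic difference is that the paper phrases the compatibility of the center with localization via the identification $\ZZ_R(M)=\Hom_{\End_R(M)}(M,M)$, whereas you obtain it from the base-change isomorphism $\End_R(M)\otimes_R K\cong\End_K(V)$ together with the observation that centrality passes to $K$-linear combinations; the content is the same.
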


\begin{proof}
$M$ being nonzero torsion-free implies that $M$ is faithful over $R$.
This means that the canonical map $R\to \ZZ_R(M)$ is injective.
It is clear that $\ZZ_R(M)$ is torsion-free as an $R$-module.
Since $\ZZ_R(M)=\Hom_{\End_R(M)}(M,M)$, $\ZZ_R(M)$ commutes with the localization; $\ZZ_R(M)\otimes_R Q(R) \cong\ZZ_{Q(R)}(M\otimes_RQ(R))$.
As $Q(R)$ is a field, $M\otimes_R Q(R)$ is $Q(R)$-free.
Therefore, $\ZZ_R(M)\otimes_R Q(R) \cong Q(R)$ as $Q(R)$-algebras.
By the construction, the composition of the inclusions $R \to \ZZ_R(M)$ and $\ZZ_R(M) \to \ZZ_R(M)\otimes_R Q(R)\cong Q(R)$ is identical to the canonical inclusion $R\to Q(R)$.
This means that $\ZZ_R(M)$ is isomorphic to an $R$-subalgebra of $Q(R)$.
\end{proof}

Now we can prove the remaining part of \cref{thma:KE=torf}.

\begin{thm} \label{KE=torf in 1-dim}
If $R$ is a commutative noetherian ring with $\dim R\le 1$,
then we have $\ke(\catmod R)=\torf(\catmod R)$.
\end{thm}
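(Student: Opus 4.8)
The plan is to reduce the assertion to a statement about KE-closed subcategories of $\catmod A$ for $A$ a one-dimensional Noetherian domain, and then to run the subalgebra machinery of \cref{seq of subalg,subalgebra,center}.

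By \cref{prp:char KE = torf}(iii) it suffices to prove $\ke(R;\{\pp\})=\{\catmod^{\ass}_{\{\pp\}}R\}$ for every non-maximal prime $\pp$; since $\dim R\le 1$, such a $\pp$ is a minimal prime with $\dim R/\pp=1$. Fix such a $\pp$ and a $\catX\in\ke(R;\{\pp\})$, and let $\pi\colon\catmod(R/\pp)\to\catmod R$ be the restriction functor along $R\surj R/\pp$, which is exact. Then $\catY:=\pi^{-1}(\catX)$ is a KE-closed subcategory of $\catmod(R/\pp)$, and \cref{prp:Ass catX  R/I} gives $\Ass\catY=\{\ol\pp\}=\{(0)\}$; in particular every module in $\catY$ is torsion-free over the one-dimensional Noetherian domain $A:=R/\pp$. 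Because $\catX\subseteq\F(\catX)=\catmod^{\ass}_{\{\pp\}}R$ by \cref{ex:cm R is torf}, the equality $\catX=\catmod^{\ass}_{\{\pp\}}R$ will follow, via \cref{prp:surj base change}(1) and \cref{prp:mod^ass gen}, once $\catY$ is shown to contain every torsion-free $A$-module; and by \cref{dom res over dim 1} the latter is equivalent to $A\in\catY$. So the whole theorem reduces to the following: if $A$ is a one-dimensional Noetherian domain and $\catY\subseteq\catmod A$ is KE-closed with $\Ass\catY=\{(0)\}$, then $A\in\catY$.

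To prove this, choose a nonzero (hence faithful) torsion-free $N\in\catY$, which exists because $\Ass\catY=\{(0)\}\neq\emptyset$. By \cref{center} (using that $\catY$ is additive and closed under kernels) we get $\ZZ_A(N)\in\catY$, and by \cref{subalgebra} this module is isomorphic to a module-finite $A$-subalgebra $S$ of $Q(A)$, so $S\in\catY$. Now apply \cref{seq of subalg} to obtain a chain $A=S_0\subsetneq S_1\subsetneq\cdots\subsetneq S_n=S$ of $A$-subalgebras of $Q(A)$ with $S_{i+1}\subseteq\mm_i\colon\mm_i$ for suitable $\mm_i\in\Max(S_i)$. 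I would then prove $S_i\in\catY$ for all $i$ by descending induction, the base case $i=n$ being $S\in\catY$; taking $i=0$ yields $A\in\catY$.

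The inductive step — passing from $S_{i+1}\in\catY$ to $S_i\in\catY$ — is the crux, and is where I expect the main obstacle to be. Writing $B:=S_i$, $C:=S_{i+1}$, $\mm:=\mm_i$, the inclusion $C\subseteq\mm\colon\mm$ together with $1\in C$ forces $\mm C=\mm$, so $\mm$ is an ideal of both $B$ and $C$. The intended mechanism has two parts. First, exhibit $\mm$ inside $\catY$: by \cref{fct:KE-closed imply Hom-ideal} we have $\Hom_A(M,C)\in\catY$ for every $M\in\catmod A$, and since $C$ is torsion-free of rank one this $\Hom$ is computed as the fractional ideal $C\colon M$; one then chooses $M$ so that $C\colon M\cong\mm$ — the natural candidate being $M=C\colon\mm$, which works once $\mm$ is reflexive relative to $C$ — and closes up under kernels if necessary, placing $\mm\in\catY$. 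Second, view $\catY\cap\catmod B$ as an extension-closed additive subcategory of $\catmod B$ containing the maximal ideal $\mm$ of $B$, and invoke \cref{imst} over the ring $B$ to conclude $B\in\catY$. The delicate point is the first part — realizing the specific maximal ideal $\mm_i$ as an object of $\catY$ — which may require either a careful choice of the fractional ideal $M$ or a refinement of the chain produced by \cref{seq of subalg}. Granting this, everything else (the base-change reductions above and the final appeals to \cref{dom res over dim 1,prp:char KE = torf}) is routine.
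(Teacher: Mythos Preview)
Your overall architecture matches the paper's proof almost step for step: the reduction via \cref{prp:char KE = torf}, the passage to the domain $A=R/\pp$ via \cref{prp:Ass catX  R/I} and \cref{prp:surj base change}, the use of \cref{center} and \cref{subalgebra} to land a subalgebra $S\subseteq Q(A)$ inside $\catY$, the chain from \cref{seq of subalg}, and the descending induction. The gap is precisely where you flagged it, and it is a real one.

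Your proposed mechanism for the inductive step is to realize $\mm_i$ as $\Hom_A(C{:}\mm_i,\,C)$, which requires $\mm_i$ to be reflexive as a fractional $C$-ideal. Over a non-Gorenstein one-dimensional ring $C$ this fails in general, and there is no evident refinement of the chain in \cref{seq of subalg} that forces it. So the induction as written does not close.

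The paper avoids reflexivity entirely by strengthening the induction hypothesis. Rather than tracking only $S_i\in\catY$, it tracks that the KE-closed subcategory $\rho_i^{-1}(\catX)\subseteq\catmod S_i$ (preimage under restriction along $R\to S_i$) contains \emph{all} torsion-free $S_i$-modules; the base case follows from $S_n\in\rho_n^{-1}(\catX)$ by \cref{dom res over dim 1}. In the inductive step the point is purely structural: $S_{i+1}\subseteq\mm_i{:}\mm_i$ says exactly that $\mm_i$ is an $S_{i+1}$-module, obviously torsion-free, so the (strengthened) hypothesis gives $\mm_i\in\rho_{i+1}^{-1}(\catX)\subseteq\rho_i^{-1}(\catX)$ for free; then \cref{dom res over dim 1}(4)$\Rightarrow$(2) over $S_i$ yields the next stage. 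Equivalently, your weaker hypothesis $S_{i+1}\in\catY$ can be upgraded on the spot by one application of \cref{dom res over dim 1} over the one-dimensional ring $S_{i+1}$, after which $\mm_i$ drops out as a torsion-free $S_{i+1}$-module --- no $\Hom$ or colon-ideal computations needed. That change of viewpoint, viewing $\mm_i$ as a module over the \emph{larger} ring $S_{i+1}$ rather than trying to manufacture it over $A$, is the missing idea.
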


\begin{proof}
By \cref{prp:char KE = torf}, we only need to verify that
$\ke(R;\{\pp\})= \{ \catmod^{\ass}_{\{\pp\}}R \}$ holds for any $\pp \in \Min R$.
Fix a prime ideal $\pp \in \Min R$.
Let $\catX$ be a KE-closed subcategory of $\catmod R$ such that $\Ass \catX=\{\pp\}$.
In view of \cref{prp:mod^ass gen}, it is enough to show that $\catX \supseteq \catmod^{\ass}_{\{\pp\}} R$.

Let $\pi\colon \catmod(R/\pp) \to \catmod R$ be the restriction functor.
Then by \cref{prp:Ass catX  R/I}, $\pi^{-1}(\catX)$ is a nonzero KE-closed subcategory of $\catmod(R/\pp)$.
Take a nonzero module $M\in \pi^{-1}(\catX)$.
Then, since $\Ass_R(M)=\{\pp\}$, $M$ is torsion-free as an $R/\pp$-module.
\cref{center} says that $\ZZ_{R/\pp}(M)\in \pi^{-1}(\catX)$.
By \cref{subalgebra}, we see that $\ZZ_{R/\pp}(M)$ is isomorphic to an $R/\pp$-subalgebra $S$ of $Q(R/\pp)$.
Using \cref{seq of subalg}, we obtain an integer $n\ge 0$ and a sequence of $R/\pp$-subalgebras $R/\pp=S_0\subsetneq S_1 \subsetneq \cdots \subsetneq S_n=S$ with maximal ideals $\mm_i\in \Max(S_i)$ such that $S_{i+1}\subseteq \mm_i:\mm_i$.

For each $i=1,\dots,n$, consider the restriction functor $\rho_i\colon \catmod S_i \to \catmod R/\pp$.
We claim that $\rho_i^{-1}(\catX)$ contains all torsion-free $S_i$-modules.
We prove this by decreasing induction on $i$.

If $i=n$, then $S_n\cong \ZZ_R(M)\in \catX$.
Since $\rho_n^{-1}(\catX)$ is a torsion-free class in $\catmod S_n$, this, with \cref{dom res over dim 1}, shows that $\rho_n^{-1}(\catX)$ contains all torsion-free $S_n$-modules.

Next suppose that $i<n$.
Since $S_{i+1}\subseteq\mm_i:\mm_i$, $\mm_i$ may be viewed as a torsion-free $S_{i+1}$-module.
By the induction hypothesis, $\mm_i$ belongs to $\rho_{i+1}^{-1}(\catX)$.
This then implies that the $S_i$-module $\mm_i$ belongs to $\rho_{i}^{-1}(\catX)$.
Due to the fact that $\rho_{i}^{-1}(\catX)$ is a KE-closed subcategory of $\catmod S_i$, \cref{dom res over dim 1} yields that $\rho_{i}^{-1}(\catX)$ contains all torsion-free $S_i$-modules.

As a consequence, we obtain that $\rho_0^{-1}(\catX)=\pi^{-1}(\catX)$ contains all torsion-free $R/\pp$-modules.
In other words, $\pi^{-1}(\catX) \supseteq \pi^{-1}(\catmod^{\ass}_{\{\pp\}} R)$.
Thus, we have $\catX \supseteq \catmod^{\ass}_{\{\pp\}} R$ by \cref{prp:surj base change},
and this finishes the proof.
\end{proof}
\subsection{The two-dimensional case}\label{ss:dim 2}
In this subsection, we classify the KE-closed subcategories of $\catmod R$
for a two-dimensional normal domain $R$.
Let us begin with the consequences of the results of the previous subsection 
for the higher dimensional case.

\begin{prp}\label{prp:KE=torf when ht ge dim - 1 locally}
Let $R$ be a commutative noetherian ring of finite Krull dimension.
Then for any $\pp \in \Spec R$ such that $\htt \pp \ge \dim R -1$,
we have $\ke(R; \{\pp\})= \{\catmod^{\ass}_{\{\pp\}} R\}$.
\end{prp}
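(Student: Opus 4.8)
The plan is to reduce the statement to the already-established one-dimensional case \cref{KE=torf in 1-dim}, applied over the quotient domain $R/\pp$, and then transport the conclusion back to $\catmod R$ using the base-change lemmas \cref{prp:Ass catX  R/I} and \cref{prp:surj base change}.

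First I would record the arithmetic input: since $R$ has finite Krull dimension, $\htt\pp+\dim(R/\pp)\le\dim R$, so the hypothesis $\htt\pp\ge\dim R-1$ forces $\dim(R/\pp)\le 1$. This is the only place the height assumption is used, and it is what makes the one-dimensional theorem applicable. Next, fix a KE-closed subcategory $\catX$ of $\catmod R$ with $\Ass\catX=\{\pp\}$; by \cref{prp:mod^ass gen} it suffices to show $\catX\supseteq\catmod^{\ass}_{\{\pp\}}R$. Let $\pi\colon\catmod(R/\pp)\to\catmod R$ be the restriction functor. Since $\pp\in V(\pp)$ and $\Ass\catX\cap V(\pp)=\{\pp\}$, \cref{prp:Ass catX  R/I} shows that $\pi^{-1}(\catX)$ is a KE-closed subcategory of $\catmod(R/\pp)$ with $\Ass(\pi^{-1}(\catX))=\{\ol{\pp}\}$, where $\ol{\pp}$ is the zero ideal of the domain $R/\pp$; in particular $\pi^{-1}(\catX)$ is nonzero.

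Now, because $\dim(R/\pp)\le 1$, \cref{KE=torf in 1-dim} applies to $R/\pp$, so every KE-closed subcategory of $\catmod(R/\pp)$ is a torsion-free class. Hence $\pi^{-1}(\catX)$ is a torsion-free class of $\catmod(R/\pp)$ with associated primes $\{\ol{\pp}\}$, and therefore $\pi^{-1}(\catX)=\catmod^{\ass}_{\{\ol{\pp}\}}(R/\pp)$ by \cref{prp:mod^ass gen} (equivalently, by Takahashi's classification \cref{fct:Takahashi}). In particular $\pi^{-1}(\catX)\supseteq\catmod^{\ass}_{\{\ol{\pp}\}}(R/\pp)$, so applying \cref{prp:surj base change}(1) with the ideal $I=\pp$ yields $\catX\supseteq\catmod^{\ass}_{\{\pp\}}R$, as desired. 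Applying \cref{prp:mod^ass gen} once more gives $\catX=\catmod^{\ass}_{\{\pp\}}R$, hence $\ke(R;\{\pp\})=\{\catmod^{\ass}_{\{\pp\}}R\}$.

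There is no deep obstacle here: the argument is essentially an assembly of earlier results, and the only real content is recognizing that $\htt\pp\ge\dim R-1$ is precisely what guarantees $\dim(R/\pp)\le 1$. The one point deserving a moment's care is checking that $\pi^{-1}(\catX)$ is genuinely nonzero with a single associated prime, so that the one-dimensional classification pins it down exactly as $\catmod^{\ass}_{\{\ol{\pp}\}}(R/\pp)$; this is immediate from the bijection in \cref{prp:Ass catX  R/I}.
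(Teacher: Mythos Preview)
Your proof is correct and follows essentially the same approach as the paper's: reduce to $R/\pp$ via the restriction functor, use $\htt\pp\ge\dim R-1$ to get $\dim(R/\pp)\le1$, apply \cref{KE=torf in 1-dim} together with \cref{prp:Ass catX  R/I} to identify $\pi^{-1}(\catX)$, and then pull back with \cref{prp:surj base change}(1). The paper's version is slightly terser but the logic is identical.
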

\begin{proof}
Let $\catX$ be a KE-closed subcategory with $\Ass \catX=\{\pp\}$,
and let $\pi \colon \catmod R/\pp \to \catmod R$ be the restriction functor.
Then $\pi^{-1}(\catX)$ is a KE-closed subcategory of $\catmod R/\pp$
such that $\Ass \left(\pi^{-1}(\catX)\right) = \{(0)\}$
by \cref{prp:Ass catX  R/I}.
Since $\dim R/\pp \le \dim R - \htt \pp \le 1$,
we have that $\pi^{-1}(\catX) = \catmod^{\ass}_{\{(0)\}} R/\pp$
by \cref{KE=torf in 1-dim}.
This implies $\catX = \catmod^{\ass}_{\{\pp\}} R$
by \cref{prp:surj base change} (1).
\end{proof}

\begin{cor}\label{prp:KE=torf when ht ge dim - 1}
Let $R$ be a commutative noetherian ring of finite Krull dimension
and $\catX$ a KE-closed subcategory.
If $\Ass \catX$ consists of prime ideals $\pp$ such that $\htt \pp \ge \dim R -1$,
then $\catX$ is a torsion-free class.
\end{cor}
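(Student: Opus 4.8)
The plan is to deduce this immediately from the prime-idealwise criterion developed earlier, with \cref{prp:KE=torf when ht ge dim - 1 locally} supplying exactly the input that criterion needs. First I would set $\Phi := \Ass \catX$, so that $\catX \in \ke(R;\Phi)$ by the very definition of $\ke(R;\Phi)$. By hypothesis, every $\pp \in \Phi$ satisfies $\htt \pp \ge \dim R - 1$, and since $R$ has finite Krull dimension \cref{prp:KE=torf when ht ge dim - 1 locally} applies to each such $\pp$ and yields $\ke(R;\{\pp\}) = \{\catmod^{\ass}_{\{\pp\}} R\}$.

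Now I would invoke \cref{prp:KE=torf prime-idealwise}: the hypothesis ``$\ke(R;\{\pp\}) = \{\catmod^{\ass}_{\{\pp\}} R\}$ for all $\pp \in \Phi$'' that we have just verified is precisely what is required to conclude $\ke(R;\Phi) = \{\catmod^{\ass}_{\Phi} R\}$. Combining this with $\catX \in \ke(R;\Phi)$ forces $\catX = \catmod^{\ass}_{\Phi} R$, and the latter is a torsion-free class of $\catmod R$ by Takahashi's classification (\cref{fct:Takahashi}). This finishes the proof.

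Since the argument is just a chaining of two already-proved results, there is no real obstacle to anticipate; the only point to be careful about is that the finite-dimensionality hypothesis is needed so that \cref{prp:KE=torf when ht ge dim - 1 locally} is available for every $\pp \in \Phi$, and that $\Ass\catX = \Phi$ is exactly the membership condition defining $\ke(R;\Phi)$, so that the conclusion of \cref{prp:KE=torf prime-idealwise} indeed pins down $\catX$ itself rather than merely some model subcategory.
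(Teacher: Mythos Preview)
Your proposal is correct and matches the paper's proof essentially verbatim: the paper simply writes ``It follows from \cref{prp:KE=torf prime-idealwise,prp:KE=torf when ht ge dim - 1 locally},'' and your argument unpacks precisely this chaining.
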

\begin{proof}
It follows from
\cref{prp:KE=torf prime-idealwise,prp:KE=torf when ht ge dim - 1 locally}.
\end{proof}

\begin{thm} \label{KE over normal with dim 2}
Let $R$ be a noetherian normal domain and let $\catX$ be a KE-closed subcategory of $\catmod R$.
Then
\begin{enua}
\item If $(0)\in\Ass\catX$, then $\catX$ is a dominant resolving subcategory.
\item If $(0)\not\in \Ass \catX$ and $\dim R\le 2$, then $\catX$ is a torsion-free class.
\end{enua}
\end{thm}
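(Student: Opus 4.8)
The plan is to extract both parts from results already established, with the genuine work confined to part (1). For part (2), I would observe that since $R$ is a domain and $(0)\notin\Ass\catX$, every prime in $\Ass\catX$ is nonzero and therefore has height at least $1$, which is $\ge\dim R-1$ because $\dim R\le 2$; hence \cref{prp:KE=torf when ht ge dim - 1} applies directly and shows that $\catX$ is a torsion-free class. For part (1) the whole point is to prove that $R\in\catX$: once this is known, the implication (1)$\Rightarrow$(2) of \cref{KE-closed res} immediately gives that $\catX$ is dominant resolving.

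So assume $(0)\in\Ass\catX$; I would produce $R$ inside $\catX$ as follows. First, by \cref{prp:restrict dense torf} applied with $\Phi=\{(0)\}$, the subcategory $\catX\cap\catmod^{\ass}_{\{(0)\}}R$ is KE-closed with associated primes exactly $\{(0)\}$, hence nonzero; and $\catmod^{\ass}_{\{(0)\}}R$ is the category of finitely generated torsion-free $R$-modules since $R$ is a domain. Pick a nonzero $L$ in this intersection. Then \cref{center} yields $\ZZ_R(L)\in\catX$, and \cref{subalgebra} identifies $\ZZ_R(L)$ with an $R$-subalgebra $S$ of $Q(R)$ which is finitely generated as an $R$-module. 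A module-finite subalgebra of $Q(R)$ is integral over $R$, so normality of $R$ forces $S\subseteq R$; together with $R\subseteq S$ this gives $\ZZ_R(L)\cong S=R$, so $R\in\catX$, and part (1) follows as above.

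The main obstacle is precisely this last step of part (1): upgrading membership of a torsion-free module to membership of $R$ itself, via the center of its endomorphism ring and integral closure. This is the only place the normality hypothesis is used, and it is indispensable — without it the argument would only place some module-finite birational overring $S\supseteq R$ in $\catX$ rather than $R$ itself. The remaining points (that $\Ass L=\{(0)\}$ genuinely forces $L$ to be torsion-free over the domain $R$, and the internal use of \cref{prp:KE=torf in torf} inside \cref{prp:restrict dense torf}) are routine.
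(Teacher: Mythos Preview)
Your proof is correct and essentially identical to the paper's: for (1) you use \cref{prp:restrict dense torf} to find a nonzero torsion-free $L\in\catX$, then \cref{center} and \cref{subalgebra} to place a module-finite $R$-subalgebra $S\subseteq Q(R)$ in $\catX$, and normality to conclude $S=R$, after which \cref{KE-closed res} finishes; for (2) you invoke \cref{prp:KE=torf when ht ge dim - 1} just as the paper does. The only difference is cosmetic---you spell out the integrality argument behind ``normality forces $S=R$'' a bit more explicitly than the paper.
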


\begin{proof}
(1): Assume $(0)\in\Ass\catX$.
We see from \cref{prp:restrict dense torf} that there exists an $R$-module $M\in \catX$ such that $\Ass M=\{(0)\}$.
Note that $M$ is a torsion-free $R$-module.
By \cref{center} and \cref{subalgebra}, $\ZZ_R(M)$ is an $R$-subalgebra of $Q(R)$ such that $\ZZ_R(M)\in \catX$.
By the normality of $R$, $R=\ZZ_R(M)$.
This shows that $R\in\catX$.
Then \cref{KE-closed res} implies that $\catX$ is a dominant resolving subcategory.

\if0
(2): Assume $(0)\not\in\Ass\catX$ and $\dim R\le 2$.
Due to Lemma \ref{prp:mod^ass gen}, it is enough to show that $\catX \supseteq \catmod_{\{\pp\}}^{\ass} R$ for any $\pp\in\Ass \catX$.
Take a prime ideal $\pp \in \Ass \catX$ and an $R$-module $M\in\catX$ with $\pp\in \Ass M$.
Remark that $(0)$ does not belong to $\Ass M (\subseteq \Ass \catX)$, and hence $\ann M$ contains a nonzero element $x\in R$.
Therefore, by letting $\pi \colon \catmod R/(x) \to \catmod R$ be the restriction functor, $M$ is in $\pi^{-1}(\catX)$.
Since $\dim R/(x) \le 1$ and $\pi^{-1}(\catX)$ is a KE-closed subcategory of $\catmod (R/(x))$, Theorem \ref{KE=torf in 1-dim} yields that $\pi^{-1}(\catX)$ is a torsion-free class in $\catmod (R/(x))$.
Observe that $x\in \ann(M)\subseteq \pp$ so that the image $\overline{\pp}$ of $\pp$ in $R/(x)$ satisfies $\overline{\pp}\in \Ass_{R/(x)}(M) \subseteq \Ass_{R/(x)}(\pi^{-1}(\catX))$.
It then follows by Lemma \ref{prp:mod^ass gen} that $\pi^{-1}(\catX) \supseteq \catmod_{\{\overline{\pp}\}}^\ass (R/(x))$.
Thanks to Proposition \ref{prp:surj base change}, we deduce that $\catX \supseteq \catmod_{\{\pp\}}^\ass R$.
\fi

(2): It follows from \cref{prp:KE=torf when ht ge dim - 1}.
\end{proof}

\begin{cor}\label{prp:classify KE when two-dim normal local domain}
Let $R$ be a two-dimensional noetherian normal domain.
\begin{enua}
\item
We have $\ke(\catmod R)=\torf(\catmod R) \cup \domres(\catmod R)$.
\item
When $R$ is local,
we have $\ke(\catmod R)=\torf(\catmod R) \cup\{\cm R\}$.
\end{enua}
\end{cor}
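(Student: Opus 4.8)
The plan is to deduce both parts from \cref{KE over normal with dim 2} together with the correspondence between KE-closed subcategories and dominant resolving subcategories recorded in \cref{thm:dom res,torf res,KE-closed res}. I will use repeatedly that a two-dimensional noetherian normal domain $R$ is Cohen--Macaulay (it is $(S_2)$ and of dimension $2$), so that $\depth R_\pp=\htt\pp$ for every $\pp\in\Spec R$; in particular $\depth R_\pp\le 2$, with equality precisely at the maximal ideal in the local case.

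For part (1), I would first prove $\torf(\catmod R)\cup\domres(\catmod R)\subseteq\ke(\catmod R)$. Torsion-free classes are in particular KE-closed. If $\catX\in\domres(\catmod R)$, then $R\in\catX$ (a resolving subcategory contains all projectives), so $\inf_{X\in\catX}\{\depth_{R_\pp}X_\pp\}\le\depth R_\pp\le 2$ for all $\pp$, and \cref{KE-closed res} shows $\catX$ is KE-closed. For the reverse inclusion, let $\catX\in\ke(\catmod R)$: if $(0)\in\Ass\catX$ then $\catX\in\domres(\catmod R)$ by \cref{KE over normal with dim 2}(1), and if $(0)\notin\Ass\catX$ then $\catX\in\torf(\catmod R)$ by \cref{KE over normal with dim 2}(2) (here $\dim R\le 2$ is used). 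This gives (1).

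For part (2) assume $R$ local. The inclusion $\torf(\catmod R)\cup\{\cm R\}\subseteq\ke(\catmod R)$ holds since torsion-free classes are KE-closed and $\cm R$ is KE-closed (see the introduction). For the reverse inclusion it suffices, by part (1), to show $\domres(\catmod R)\subseteq\torf(\catmod R)\cup\{\cm R\}$. So let $\catX\in\domres(\catmod R)$ and set $t(\pp):=\inf_{X\in\catX}\{\depth_{R_\pp}X_\pp\}$, so that $\catX=\{M\in\catmod R\mid\depth_{R_\pp}M_\pp\ge t(\pp)\text{ for all }\pp\}$ by \cref{thm:dom res}, and $t(\pp)\le\depth R_\pp$ for all $\pp$ because $R\in\catX$. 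If $t(\pp)\le 1$ for every $\pp$, then $\catX$ is a torsion-free class by \cref{torf res}. Otherwise $t(\pp)\ge 2$ for some $\pp$; since $t(\pp)\le\depth R_\pp=\htt\pp\le 2$, this forces $\pp=\mm$ and $t(\mm)=2$, so every object of $\catX$ has depth at least $2=\dim R$ at $\mm$, i.e.\ $\catX\subseteq\cm R$. On the other hand, over a Cohen--Macaulay local ring one has $\cm R=\{M\in\catmod R\mid\depth_{R_\pp}M_\pp\ge\depth R_\pp\text{ for all }\pp\}$, and since $t(\pp)\le\depth R_\pp$ for all $\pp$ this set is contained in $\catX$; hence $\catX=\cm R$, which completes the argument.

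The step requiring the most care is the identification $\catX=\cm R$ in the second case of part (2): one must know that, over a Cohen--Macaulay local ring, $\cm R$ has the pointwise description $\{M\mid\depth_{R_\pp}M_\pp\ge\depth R_\pp\ \text{for all}\ \pp\}$ --- equivalently, that $\cm R$ is the smallest dominant resolving subcategory of $\catmod R$, so that any dominant resolving subcategory (which contains $R$, hence has $t(\pp)\le\depth R_\pp$) automatically contains $\cm R$. This is standard (unmixedness of Cohen--Macaulay modules together with the dimension formula $\dim M_\pp+\dim R/\pp=\dim M$), but it is the one input not formally isolated earlier; granting it, everything else is bookkeeping with depth inequalities through the results already cited.
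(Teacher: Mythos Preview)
Your argument is correct and follows essentially the same route as the paper. For (1) the paper simply cites \cref{KE over normal with dim 2}, leaving the easy inclusion $\domres(\catmod R)\subseteq\ke(\catmod R)$ (via \cref{KE-closed res} and $\depth R_\pp\le 2$) implicit, and for (2) the paper argues exactly as you do---a KE-closed $\catX$ that is not a torsion-free class is dominant resolving with $\inf_{X\in\catX}\{\depth X\}\ge 2$ by \cref{torf res,KE over normal with dim 2}, hence $\catX=\cm R$; your write-up just makes the final identification $\catX=\cm R$ explicit, which the paper states without further comment.
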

\begin{proof}
(1) follows from \cref{KE over normal with dim 2}.
We now prove (2).
Let $\catX$ be a KE-closed subcategory of $\catmod R$ which is not a torsion-free class.
Thanks to Corollary \ref{torf res} and Theorem \ref{KE over normal with dim 2}, $\catX$ is a dominant resolving subcategory with $\inf_{X\in\catX}\{\depth X\} \ge 2$.
We then conclude that $\catX=\cm R$.
\end{proof}

\begin{rmk}\label{rmk:classification of dom resol}
The dominant resolving subcategories of $\catmod R$ were classified 
via certain functions on $\Spec R$
for a Cohen-Macaulay ring $R$ \cite[Theorem 1.4]{DT}
and for a general ring $R$ with some assumption \cite[Theorem 6.9]{Takahashi2}.
Therefore, \cref{prp:classify KE when two-dim normal local domain} gives
a complete classification of the KE-closed subcategories of $\catmod R$
for a two-dimensional noetherian normal ring $R$.
\end{rmk}


\section{Classifying several subcategories of a torsion-free class}
In this section, we classify several subcategories of a torsion-free class in $\catmod R$.
To clarify the discussion,
we often consider
an abelian category $\catA$ such that $\serre \catA = \tors \catA$.
Such abelian categories include the following examples:
\begin{itemize}
\item
$\catmod R$ when $R$ is a commutative noetherian ring (cf.\ \cite[Corollary 7.1]{SW}).
\item
The category $\catmod \LL$ of finitely generated $\LL$-modules 
when $(R,\LL)$ is a Noether algebra such that 
$\LL_{\pp}$ is Morita equivalent to a (nonzero) local ring for each $\pp \in \Spec R$ 
(cf.\ \cite[Corollary 3.24]{IK}).
\item
The category $\coh X$ of coherent $\shO_X$-modules when $X$ is a quasi-affine noetherian scheme 
(cf.\ \cite[Corollary 5.4]{Sai2}).
\end{itemize}
First of all, we explain that some classes of subcategories coincide in such an abelian category.
\begin{prp}\label{prp:subcat when Serre=tors}
Let $\catA$ be an abelian category such that $\serre \catA = \tors \catA$.
Then the following hold:
\begin{enua}
\item
$\tors \catA = \ce \catA$
(or equivalently, $\serre \catA = \tors \catA = \wide \catA = \ice \catA = \ce \catA $).
\item
$\torf \catA = \ike \catA = \ie \catA$.
\end{enua}
\end{prp}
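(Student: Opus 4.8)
The plan is to bootstrap the two statements from the characterizations of CE-closed and IE-closed subcategories via their torsion closures (\cref{prp:CE=tors in tors}, \cref{prp:char IE}) together with the transitivity property of \cref{prp:torf in Serre}. The underlying mechanism is the same in both parts: for a subcategory $\catX$ in the relevant class, its torsion closure $\T(\catX)$ is a priori only a torsion class of $\catA$, but the hypothesis $\serre\catA=\tors\catA$ upgrades it to a \emph{Serre} subcategory of $\catA$; then \cref{prp:torf in Serre} lets us push a torsion(-free) class of $\T(\catX)$ out to a torsion(-free) class of $\catA$.

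\textbf{Part (1).} From the implication diagram we already have $\serre\catA\subseteq\wide\catA\subseteq\ice\catA\subseteq\ce\catA$ and $\tors\catA\subseteq\ce\catA$, and $\serre\catA=\tors\catA$ by hypothesis, so it suffices to prove the reverse inclusion $\ce\catA\subseteq\tors\catA$. Given $\catX\in\ce\catA$, \cref{prp:CE=tors in tors} says $\catX$ is a torsion class of its torsion closure $\T(\catX)$, which is a torsion class of $\catA$, hence a Serre subcategory of $\catA$ by the hypothesis. Then \cref{prp:torf in Serre} gives that $\catX$ is a torsion class of $\catA$. This yields $\tors\catA=\ce\catA$, and squeezing the chain of inclusions above gives $\serre\catA=\tors\catA=\wide\catA=\ice\catA=\ce\catA$.

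\textbf{Part (2).} Likewise the implication diagram gives $\torf\catA\subseteq\ike\catA\subseteq\ie\catA$, so it suffices to show $\ie\catA\subseteq\torf\catA$. For $\catX\in\ie\catA$, \cref{prp:char IE} (the equivalence of being IE-closed with being a torsion-free class of $\T(\catX)$) together with the fact that $\T(\catX)$ is a torsion class, hence a Serre subcategory of $\catA$, and \cref{prp:torf in Serre} show that $\catX$ is a torsion-free class of $\catA$. I do not expect a genuine obstacle here: the only point requiring care is verifying that the needed inclusions are exactly those displayed in the implication diagram and that the hypothesis $\serre\catA=\tors\catA$ is precisely what makes $\T(\catX)$ Serre so that \cref{prp:torf in Serre} applies; everything else is bookkeeping.
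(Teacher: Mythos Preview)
Your proof is correct and, for part (1), essentially identical to the paper's: both use \cref{prp:CE=tors in tors} to realize a CE-closed $\catX$ as a torsion class inside a torsion class, upgrade the ambient torsion class to a Serre subcategory via the hypothesis, and then apply \cref{prp:torf in Serre}.

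For part (2) there is a mild difference worth noting. The paper does not spell out an argument but simply defers to \cite[Section~3]{Eno}, whereas you give a self-contained proof using \cref{prp:char IE} (specifically the characterization ``IE-closed $\Leftrightarrow$ torsion-free class of $\T(\catX)$'') together with \cref{prp:torf in Serre}. Your route has the advantage of being entirely internal to the paper and exactly parallel to the argument for part (1); it also makes transparent that the only ingredient needed beyond the implication diagram is the same upgrade-then-transfer mechanism. Nothing is lost relative to the paper's approach, and arguably something is gained in clarity.
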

\begin{proof}
(1)
It suffices to show that a CE-closed subcategory $\catX$ of $\catA$
is a torsion class of $\catA$.
There exists a torsion class $\catT$ of $\catA$ such that $\catX$ is a torsion class of $\catT$
by \cref{prp:CE=tors in tors}.
Then $\catT$ is a Serre subcategory of $\catA$ since $\serre \catA = \tors \catA$.
Thus, $\catX$ is also a torsion class of $\catA$ by \cref{prp:torf in Serre}.

(2)
We omit the proof since the same discussion as in \cite[Section 3]{Eno} works in this setting.
\end{proof}


Next, let us classify several subcategories of a torsion-free class.
(cf.\ \cref{dfn:subcat in ex cat}).
\begin{prp}\label{prp:tors=Serre in torf}
Let $\catA$ be an abelian category such that $\serre \catA = \tors \catA$,
and let $\catX$ be a torsion-free class of $\catA$.
\begin{enua}
\item
$\tors\catX = \serre \catX$ holds.
\item
If $\catX$ has an exact duality $\bbD$ (see \cref{dfn:ex dual}),
then $\tors\catX = \serre \catX = \torf \catX$ hold.
\end{enua}
\end{prp}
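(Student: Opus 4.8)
The plan is to obtain both parts by combining the characterization of IE-closed subcategories from \cref{prp:char IE} with the collapse of various subcategory classes in an abelian category satisfying $\serre=\tors$, recorded in \cref{prp:subcat when Serre=tors}, and --- for (2) --- by transporting the result through the exact duality.

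For part (1), I would first note the trivial inclusion $\serre\catX\subseteq\tors\catX$: a Serre subcategory of the exact category $\catX$ is by definition closed under conflations and admissible quotients. For the reverse inclusion, start with $\catS\in\tors\catX$. Since $\catX$ is a torsion-free class of $\catA$, condition (vi) of \cref{prp:char IE} holds for $\catS$ with $\catF=\catX$, so $\catS$ is an IE-closed subcategory of $\catA$. Now invoke \cref{prp:subcat when Serre=tors}(2), which is where the hypothesis $\serre\catA=\tors\catA$ is used: it gives $\ie\catA=\torf\catA$, so $\catS$ is in fact a torsion-free class of $\catA$ and hence closed under subobjects in $\catA$. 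Finally, for any conflation $0\to A\to B\to C\to 0$ of $\catX$ with $B\in\catS$, the map $A\inj B$ is a monomorphism in $\catA$, so $A\in\catS$; thus $\catS$ is closed under admissible subobjects of $\catX$, and being already a torsion class of $\catX$ it is a Serre subcategory of $\catX$. The only care needed is to match the roles of the categories when citing \cref{prp:char IE}; there is nothing computational.

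For part (2), by part (1) it is enough to prove $\torf\catX=\serre\catX$, and since a Serre subcategory of $\catX$ is always a torsion-free class of $\catX$, only the inclusion $\torf\catX\subseteq\serre\catX$ remains. Here I would use the exact duality: the equivalence $\bbD$ and its quasi-inverse induce mutually inverse, inclusion-preserving bijections on the set of additive isomorphism-closed subcategories of $\catX$, and since both send conflations to conflations with the arrows reversed (\cref{dfn:ex dual}), each one interchanges $\torf\catX$ with $\tors\catX$ and maps $\serre\catX$ onto $\serre\catX$. Writing $\delta$ for the bijection induced by $\bbD$, we thus have $\delta(\torf\catX)=\tors\catX$ and $\delta(\serre\catX)=\serre\catX$ as subsets of the subcategory lattice; part (1) gives $\tors\catX=\serre\catX$, so $\delta(\torf\catX)=\delta(\serre\catX)$, and injectivity of $\delta$ yields $\torf\catX=\serre\catX$. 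Hence $\tors\catX=\serre\catX=\torf\catX$. The subtlety to keep in mind --- and essentially the only thing that could go wrong --- is that a bijection between the two families $\torf\catX$ and $\tors\catX$ does not by itself force them to be equal; the argument genuinely needs that the same duality $\delta$ also fixes $\serre\catX$ setwise, and it is this together with part (1) that closes the gap.
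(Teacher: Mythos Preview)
Your proof is correct and follows essentially the same approach as the paper: for (1) you pass through \cref{prp:char IE} to see that a torsion class of $\catX$ is IE-closed in $\catA$, then invoke \cref{prp:subcat when Serre=tors}(2) to upgrade it to a torsion-free class of $\catA$, exactly as the paper does. For (2) the paper works elementwise---applying $\bbD$ to a single torsion-free class $\catS$ to obtain a torsion class $\bbD(\catS)$, then using (1)---whereas you phrase the same argument at the level of the induced bijection $\delta$ on families of subcategories; this is only a cosmetic difference.
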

\begin{proof}
(1)
It is enough to show that a torsion class $\catS$ of $\catX$ is a Serre subcategory of $\catX$.
Since $\catS$ is a torsion class of a torsion-free class of $\catA$,
it is an IE-closed subcategory of $\catA$ by \cref{prp:char IE}.
Thus, it is a torsion-free class of $\catA$ by \cref{prp:subcat when Serre=tors}.
Especially, it is both a torsion class and a torsion-free class of $\catX$,
and hence $\catS$ is a Serre subcategory of $\catX$.

(2)
It is enough to show that 
a torsion-free class $\catS$ of $\catX$ is a Serre subcategory of $\catX$ by (1).
We can easily see that $\bbD(\catS)$ is a torsion class of $\catX$,
where $\bbD(\catS)$ is the subcategory of $\catX$ consisting of $\bbD A$ for $A\in \catS$.
Then $\bbD(\catS)$ is a Serre subcategory of $\catX$ by (1).
Therefore, $\catS$ is also a Serre subcategory of $\catX$ since $\bbD$ is an exact duality.
\end{proof}

This observation gives
a much simpler proof of \cref{KE=torf in 1-dim}
when $R$ is a Cohen-Macaulay ring with a canonical module $\omega$ such that $\dim R \le 1$.
For this, let us recall the classification of Serre subcategories 
of a torsion-free class of $\catmod R$.
\begin{fct}[{cf.\ \cite[Corollary 5.10]{Sai2}}]\label{fct:classify Serre in torf}
Let $R$ be a commutative noetherian ring and $\Phi$ a subset of $\Spec R$.
Then the assignments $\catS \mapsto \Ass\catS$ and $Z \mapsto \catmod^{\ass}_{Z} R$ give rise to 
mutually inverse bijections between 
$\serre(\catmod^{\ass}_{\Phi} R)$ and the set $\upper(\Phi)$ of upper sets of $\Phi$.
\end{fct}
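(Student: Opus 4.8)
The plan is to verify that both assignments are well defined and then read off that they are mutually inverse; after one structural input, everything reduces to routine bookkeeping with associated primes and supports. Throughout I would abbreviate $\catF_\Psi:=\catmod^{\ass}_\Psi R$ for $\Psi\subseteq\Spec R$, recalling from \cref{fct:Takahashi} that each $\catF_\Psi$ is a torsion-free class of $\catmod R$ and hence closed under subobjects, so that $\Ass L\subseteq\Ass M$ whenever $L$ is a submodule of $M$.

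First I would show that $Z\mapsto\catF_Z$ maps $\upper(\Phi)$ into $\serre(\catF_\Phi)$. Since $Z\subseteq\Phi$ we have $\catF_Z\subseteq\catF_\Phi$, and for a conflation $0\to A\to B\to C\to 0$ of $\catF_\Phi$ the containments $\Ass B\subseteq\Ass A\cup\Ass C$ and $\Ass A\subseteq\Ass B$ immediately give closure under conflations and admissible subobjects. The only point where the upper-set hypothesis is used is closure under admissible quotients: if $B\in\catF_Z$, then $\Supp C\subseteq\Supp B$, so any $\qq\in\Ass C$ contains a minimal prime of $\Supp B$, and such a prime lies in $\Ass B\subseteq Z$; since also $\qq\in\Ass C\subseteq\Phi$ and $Z$ is an upper set of $\Phi$, this forces $\qq\in Z$, i.e.\ $C\in\catF_Z$. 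Together with the evident equality $\Ass\catF_Z=Z$ (the inclusion $\supseteq$ coming from the modules $R/\pp$ with $\pp\in Z$), this shows $Z\mapsto\catF_Z$ is well defined and is a section of $\catS\mapsto\Ass\catS$.

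The heart of the reverse direction is the observation that a Serre subcategory $\catS$ of $\catF_\Phi$ is in fact a torsion-free class of $\catmod R$. Indeed, by \cref{prp:IKE = Serre in torf} it is IKE-closed in $\catmod R$, and since $\serre(\catmod R)=\tors(\catmod R)$ for a commutative noetherian ring (Stanley--Wang, \cite{SW}), \cref{prp:subcat when Serre=tors} gives $\ike(\catmod R)=\torf(\catmod R)$; hence $\catS\in\torf(\catmod R)$, and \cref{fct:Takahashi} yields $\catS=\catF_{\Ass\catS}$. This simultaneously shows $\catS\mapsto\Ass\catS$ is injective with left inverse $Z\mapsto\catF_Z$ and that $\Ass\catS\subseteq\Phi$. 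It then remains to check $\Ass\catS\in\upper(\Phi)$: given $\pp\in\Ass\catS$ and $\qq\in\Phi$ with $\pp\subseteq\qq$, we have $R/\pp\in\catF_{\Ass\catS}=\catS$, and the exact sequence $0\to\qq/\pp\to R/\pp\to R/\qq\to 0$ is a conflation of $\catF_\Phi$ (all three terms have associated primes in $\{\pp,\qq\}\subseteq\Phi$) whose middle term lies in $\catS$; closure of $\catS$ under admissible quotients gives $R/\qq\in\catS$, so $\qq\in\Ass\catS$. Assembling these facts produces the asserted mutually inverse bijections.

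I do not anticipate a serious obstacle: each step is a short computation with $\Ass$ and $\Supp$ combined with results already established here, and the sole external ingredient is the identity $\serre(\catmod R)=\tors(\catmod R)$, which is precisely what makes \cref{prp:subcat when Serre=tors} applicable to $\catmod R$. (The more general, scheme-theoretic statement is \cite[Corollary 5.10]{Sai2}, but the module case admits this self-contained argument.)
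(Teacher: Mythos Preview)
Your argument is correct. Each step checks out: the verification that $\catF_Z$ is Serre in $\catF_\Phi$ uses the upper-set hypothesis exactly where it is needed (closure under admissible quotients), and the converse direction cleanly combines \cref{prp:IKE = Serre in torf}, \cref{prp:subcat when Serre=tors}, and \cref{fct:Takahashi} to force $\catS=\catF_{\Ass\catS}$, after which the upper-set check via the conflation $0\to\qq/\pp\to R/\pp\to R/\qq\to 0$ is routine.

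As for comparison: the paper does not actually prove this statement. It is recorded as a \emph{Fact} with a reference to \cite[Corollary 5.10]{Sai2}, so there is no in-paper argument to compare against. What you have written is a self-contained proof in the affine case that relies only on results already established in the present paper together with the identity $\serre(\catmod R)=\tors(\catmod R)$ from \cite{SW}. In that sense your contribution is an added value: it shows the module-theoretic special case follows directly from the machinery of \S\ref{s:KE=torf in torf} and \cref{fct:Takahashi}, without needing the scheme-theoretic generality of \cite{Sai2}. One minor remark: in the conflation check you write that all three terms have associated primes in $\{\pp,\qq\}$; more precisely $\Ass(\qq/\pp)\subseteq\{\pp\}$ since $\qq/\pp$ is a submodule of $R/\pp$, but this of course still lies in $\Phi$ and the conclusion is unaffected.
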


\begin{cor}\label{prp:KE=torf when 1-dim CM with can mod}
For a Cohen-Macaulay ring $R$ with a canonical module $\omega$ such that $\dim R \le 1$,
we have $\ke(\catmod R)=\torf(\catmod R)$.
\end{cor}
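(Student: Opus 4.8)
The plan is to obtain \cref{prp:KE=torf when 1-dim CM with can mod} from \cref{prp:tors=Serre in torf}, exploiting that the category $\cm R$ of maximal Cohen--Macaulay modules is a torsion-free class of $\catmod R$ carrying an exact duality. By \cref{prp:char KE = torf} it suffices to prove $\ke(R;\{\pp\})=\{\catmod^{\ass}_{\{\pp\}}R\}$ for every $\pp\in\Spec R\setminus\Max R$; since $\dim R\le 1$, such a $\pp$ is a minimal prime with $\dim R/\pp=1$, so $R/\pp$ is a one-dimensional Cohen--Macaulay domain, and as $R$ admits a canonical module, so does $R/\pp$ (concretely $\Hom_R(R/\pp,\omega)$, since $R/\pp$ has the same dimension as $R$ locally at every maximal ideal containing $\pp$). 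Given a KE-closed $\catX$ with $\Ass\catX=\{\pp\}$, the (exact) restriction functor $\pi\colon\catmod(R/\pp)\to\catmod R$ yields a KE-closed subcategory $\pi^{-1}(\catX)$ of $\catmod(R/\pp)$ with $\Ass(\pi^{-1}(\catX))=\{(0)\}$ by \cref{prp:Ass catX  R/I}, and by \cref{prp:surj base change}(1) together with \cref{prp:mod^ass gen} it is enough to show $\pi^{-1}(\catX)=\catmod^{\ass}_{\{(0)\}}(R/\pp)$. Replacing $R$ by $R/\pp$, I am reduced to the statement: \emph{if $R$ is a one-dimensional Cohen--Macaulay domain with a canonical module $\omega$ and $\catX$ is a nonzero KE-closed subcategory with $\Ass\catX=\{(0)\}$, then $\catX=\cm R$}.

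To prove this, first observe that $\F(\catX)=\catmod^{\ass}_{\{(0)\}}R=\cm R$ by \cref{ex:cm R is torf}, so $\catX$ is a torsion-free class of the exact category $\cm R$ by \cref{prp:KE=torf in torf}. Next, I claim $\bbD:=\Hom_R(-,\omega)\colon(\cm R)^{\op}\to\cm R$ is an exact duality in the sense of \cref{dfn:ex dual}: over a Cohen--Macaulay ring with canonical module, the biduality morphism $M\to\Hom_R(\Hom_R(M,\omega),\omega)$ is an isomorphism for every $M\in\cm R$ and $\Ext^i_R(M,\omega)=0$ for all $i>0$, so applying $\Hom_R(-,\omega)$ to a conflation of $\cm R$ again produces a conflation of $\cm R$. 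Since $\serre(\catmod R)=\tors(\catmod R)$ for any commutative noetherian ring (cf.\ \cite{SW}) and $\cm R$ is a torsion-free class of $\catmod R$ (again by \cref{ex:cm R is torf}, using $\dim R\le 1$), \cref{prp:tors=Serre in torf}(2) gives $\torf(\cm R)=\serre(\cm R)$.

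Finally, $\cm R=\catmod^{\ass}_{\{(0)\}}R$, so \cref{fct:classify Serre in torf} identifies $\serre(\cm R)$ with the set of upper sets of $\{(0)\}$, which consists only of $\emptyset$ and $\{(0)\}$; hence $\serre(\cm R)=\{0,\cm R\}$. Therefore $\catX\in\torf(\cm R)=\serre(\cm R)=\{0,\cm R\}$, and since $\catX$ is nonzero we get $\catX=\cm R$. Tracing back through \cref{prp:surj base change}(1) and \cref{prp:char KE = torf} then yields $\ke(\catmod R)=\torf(\catmod R)$.

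The main obstacle I anticipate is the verification that $\bbD=\Hom_R(-,\omega)$ is genuinely an exact duality on $\cm R$: this is where Cohen--Macaulay theory enters (reflexivity of maximal Cohen--Macaulay modules against $\omega$, and the vanishing of their higher $\Ext$'s into $\omega$), whereas the rest of the argument is essentially formal. A secondary point requiring care is the descent to $R/\pp$, in particular confirming that this quotient is again a one-dimensional Cohen--Macaulay domain possessing a canonical module.
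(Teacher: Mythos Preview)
Your proof is correct and follows essentially the same strategy as the paper's: reduce via \cref{prp:char KE = torf} to checking $\ke(R;\{\pp\})=\{\catmod^{\ass}_{\{\pp\}}R\}$ for minimal primes $\pp$, then invoke \cref{prp:tors=Serre in torf}(2) with the canonical duality $\Hom_R(-,\omega)$ together with \cref{fct:classify Serre in torf} to conclude. The one difference is that you first pass to the quotient $R/\pp$ and apply the duality on $\cm(R/\pp)$, whereas the paper stays over $R$: it notes (via \cref{ass hom}(1)) that $\Ass M^{\vee}\subseteq\Ass M$ for $M\in\cm R$, so the canonical duality on $\cm R$ restricts to an exact duality on $\catmod^{\ass}_{\{\pp\}}R$ itself, and \cref{prp:tors=Serre in torf}(2) applies directly there. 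This sidesteps your reduction step and in particular the need to check that $R/\pp$ inherits a canonical module, making the paper's argument slightly shorter; your detour, however, is entirely sound.
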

\begin{proof}
The following equalities hold for any $\pp \in \Spec R$ by \cref{fct:classify Serre in torf}:
\[
\torf(\catmod^{\ass}_{\{\pp\}}R) = \{0\} \cup \ke(R;\{\pp\}),\quad
\serre(\catmod^{\ass}_{\{\pp\}} R)=\{0\} \cup \{\catmod^{\ass}_{\{\pp\}} R\}.
\]
Thus, it is enough to show that
$\torf(\catmod_{\{ \pp \}}^{\ass} R)=\serre(\catmod_{\{ \pp \}}^{\ass} R)$
for any $\pp \in \Min R$ by \cref{prp:char KE = torf}.
Consider the canonical duality $(-)^{\vee}:=\Hom_R(-,\omega) \colon (\cm R)^{\op} \simto \cm R$.
It restricts to an exact duality 
$\left(\catmod^{\ass}_{\Phi} R\right)^{\op} \simto \catmod^{\ass}_{\Phi} R$
for any $\Phi \subseteq \Min R$.
Indeed, for any $M\in \catmod^{\ass}_{\Phi} R$, we have that
\[
\Ass M^{\vee}=\Ass \Hom_R(M,\omega) = \Supp M \cap \Ass \omega
\subseteq \Supp M \cap \Min R \subseteq \Min M = \Ass M \subseteq \Phi,
\]
and hence $M^{\vee}\in \catmod^{\ass}_{\Phi} R$.
Thus, we have
$\torf\left(\catmod^{\ass}_{\Phi} R\right)=\serre\left(\catmod^{\ass}_{\Phi} R\right)$ 
by \cref{prp:tors=Serre in torf}.
In particular, we have that
$\torf(\catmod_{\{ \pp \}}^{\ass} R)=\serre(\catmod_{\{ \pp \}}^{\ass} R)$
for any $\pp \in \Min R$.
This finishes the proof.
\end{proof}

Conversely, the results of \S \ref{s:classify KE} yield
a classification of the torsion-free classes of a torsion-free class.
\begin{prp}\label{prp:classify torf in torf}
Let $R$ be a commutative noetherian ring with $\dim R \le 1$
and $\Phi$ a subset of $\Spec R$.
Then the assignments $\catF \mapsto \Ass\catF$ and $\Psi \mapsto \catmod^{\ass}_{\Psi} R$
give rise to mutually inverse bijections between 
$\torf(\catmod^{\ass}_{\Phi} R)$ and the power set $\pow(\Phi)$ of $\Phi$.
\end{prp}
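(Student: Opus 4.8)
The plan is to show that, because $\dim R \le 1$, the torsion-free classes of the exact category $\catmod^{\ass}_{\Phi} R$ are precisely the torsion-free classes of the abelian category $\catmod R$ that are contained in $\catmod^{\ass}_{\Phi} R$, and then to obtain the asserted bijection by restricting Takahashi's classification (\cref{fct:Takahashi}). First I would note that $\catmod^{\ass}_{\Phi} R$ is itself a torsion-free class of $\catmod R$ by \cref{fct:Takahashi}, so that \cref{prp:KE=torf in torf} applies with $\catA = \catmod R$ and $\catF = \catmod^{\ass}_{\Phi} R$. Hence, if $\catX$ is a torsion-free class of the exact category $\catmod^{\ass}_{\Phi} R$, then $\catX$ is a KE-closed subcategory of $\catmod R$; and since $\dim R \le 1$, \cref{KE=torf in 1-dim} upgrades this to the assertion that $\catX$ is a torsion-free class of $\catmod R$. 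Consequently $\catX = \catmod^{\ass}_{\Ass \catX} R$ by \cref{fct:Takahashi}, and $\Ass \catX \subseteq \Phi$ because $\catX \subseteq \catmod^{\ass}_{\Phi} R$.

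Conversely, I would check that each $\catmod^{\ass}_{\Psi} R$ with $\Psi \subseteq \Phi$ is a torsion-free class of the exact category $\catmod^{\ass}_{\Phi} R$: it is a torsion-free class of $\catmod R$ (again \cref{fct:Takahashi}) contained in $\catmod^{\ass}_{\Phi} R$, and since a conflation (resp.\ an admissible subobject) of $\catmod^{\ass}_{\Phi} R$ is nothing but an extension (resp.\ a subobject) in $\catmod R$ all of whose terms lie in $\catmod^{\ass}_{\Phi} R$, closure of $\catmod^{\ass}_{\Psi} R$ under extensions and subobjects in $\catmod R$ gives at once the required closure properties. Combining the two directions, $\catF \mapsto \Ass \catF$ maps $\torf(\catmod^{\ass}_{\Phi} R)$ into $\pow(\Phi)$ and $\Psi \mapsto \catmod^{\ass}_{\Psi} R$ maps $\pow(\Phi)$ into $\torf(\catmod^{\ass}_{\Phi} R)$; that these two maps are mutually inverse is immediate from \cref{fct:Takahashi}, since $\Ass(\catmod^{\ass}_{\Psi} R) = \Psi$ for every $\Psi$ and $\catmod^{\ass}_{\Ass \catX} R = \catX$ for the subcategories $\catX$ arising in the previous paragraph.

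I do not anticipate a real obstacle: the proposition is essentially a corollary of \cref{prp:KE=torf in torf}, \cref{KE=torf in 1-dim} and \cref{fct:Takahashi}. The one step deserving care is locating exactly where $\dim R \le 1$ is used, namely in passing from ``$\catX$ is KE-closed in $\catmod R$'' to ``$\catX$ is a torsion-free class of $\catmod R$'' via \cref{KE=torf in 1-dim}; without that hypothesis one would only know that a torsion-free class of $\catmod^{\ass}_{\Phi} R$ is a torsion-free class of the possibly smaller exact category $\F(\catX) = \catmod^{\ass}_{\Ass \catX} R$, which does not yield the clean power-set parametrization.
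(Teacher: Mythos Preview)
Your proposal is correct and follows essentially the same approach as the paper: use \cref{prp:KE=torf in torf} to turn a torsion-free class of $\catmod^{\ass}_{\Phi} R$ into a KE-closed subcategory of $\catmod R$, upgrade it to a torsion-free class via \cref{KE=torf in 1-dim}, and then restrict Takahashi's bijection. Your write-up is slightly more detailed in justifying the converse direction and in pinpointing where the hypothesis $\dim R \le 1$ enters, but the logical skeleton is identical to the paper's proof.
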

\begin{proof}
Let $\catF$ be a torsion-free class of $\catmod^{\ass}_{\Phi} R$.
Then it is a KE-closed subcategory of $\catmod R$ by \cref{prp:KE=torf in torf}.
It is also a torsion-free class of $\catmod R$ such that $\Ass\catX \subseteq \Phi$ 
by \cref{KE=torf in 1-dim}.
Conversely, for any subset $\Psi$ of $\Phi$,
the subcategory $\catmod^{\ass}_{\Psi} R$ is clearly 
a torsion-free class of $\catmod^{\ass}_{\Phi} R$.
Therefore, the bijection in \cref{fct:Takahashi} restricts to 
the bijection between $\torf(\catmod^{\ass}_{\Phi} R)$ and $\pow(\Phi)$.
\end{proof}

Combining \cref{prp:tors=Serre in torf,fct:classify Serre in torf,prp:classify torf in torf},
we obtain the following commutative diagram 
for a commutative noetherian ring $R$ with $\dim R \le 1$:
\[
\begin{tikzcd}[row sep=10pt,ampersand replacement=\&]
\tors(\catmod^{\ass}_{\Phi} R) \arr{d,equal} \& \\
\serre(\catmod^{\ass}_{\Phi} R) \arr{d,phantom,"\subseteq",sloped} \arr{r,"\Ass"',"\iso"} \& \upper(\Phi) \arr{d,phantom,"\subseteq",sloped}\\
\torf(\catmod^{\ass}_{\Phi} R) \arr{r,"\Ass"',"\iso"} \& \pow(\Phi).
\end{tikzcd}
\]

As an immediate consequence of this,
we obtain the classifications of torsion(-free) classes of $\cm R$
when $R$ is a Cohen-Macaulay ring of dimension less than one.
\begin{cor}\label{prp:tors in cm}
Let $R$ be a Cohen-Macaulay ring with $\dim R \le 1$.
\begin{itemize}
\item 
$\tors(\cm R) = \serre (\cm R) = \torf(\cm R)$ holds.
\item
There is a bijection between $\torf(\cm R)$ and $\pow(\Min R)$.
\end{itemize}
\end{cor}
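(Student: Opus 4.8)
The plan is to reduce everything to the commutative diagram displayed just before the statement, applied to the particular subset $\Phi := \Min R$ of $\Spec R$. The first input I would record is the identification $\cm R = \catmod^{\ass}_{\Min R} R$, valid when $R$ is Cohen-Macaulay with $\dim R \le 1$; this is exactly \cref{ex:cm R is torf}~(2). In particular $\cm R$ is a torsion-free class of $\catmod R$, so the notions $\serre(\cm R)$, $\tors(\cm R)$, $\torf(\cm R)$ of \cref{dfn:subcat in ex cat} make sense, and the diagram applies verbatim with $\Phi = \Min R$.

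Next I would observe that the two vertical inclusions in the diagram become equalities in this case. Indeed, distinct minimal primes of $R$ are incomparable under inclusion, so $\Min R$ is an antichain in $\Spec R$; hence every subset of $\Min R$ is (vacuously) an upper set of $\Min R$, i.e.\ $\upper(\Min R) = \pow(\Min R)$. Transporting this equality back along the bijections $\Ass$ in the diagram gives $\serre(\cm R) = \torf(\cm R)$, and combining it with the top equality $\tors(\cm R) = \serre(\cm R)$ (which comes from $\serre(\catmod R) = \tors(\catmod R)$ via \cref{prp:tors=Serre in torf}) yields the first bullet, $\tors(\cm R) = \serre(\cm R) = \torf(\cm R)$.

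Finally, the bottom horizontal arrow of the diagram is precisely \cref{prp:classify torf in torf} specialized to $\Phi = \Min R$: the assignments $\catF \mapsto \Ass \catF$ and $\Psi \mapsto \catmod^{\ass}_{\Psi} R$ restrict to mutually inverse bijections between $\torf(\catmod^{\ass}_{\Min R} R) = \torf(\cm R)$ and $\pow(\Min R)$, which is the second bullet.

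There is essentially no obstacle here: the statement is a direct specialization of the general results of the previous two sections. The only point requiring a moment's care is the identification $\cm R = \catmod^{\ass}_{\Min R} R$, which genuinely uses the hypothesis $\dim R \le 1$ (otherwise $\cm R$ need not be closed under submodules and the exact-category notions would not transfer cleanly), but this has already been established in \cref{ex:cm R is torf}~(2).
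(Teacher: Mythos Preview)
Your proof is correct and follows essentially the same approach as the paper: identify $\cm R = \catmod^{\ass}_{\Min R} R$ via \cref{ex:cm R is torf}, then invoke \cref{prp:tors=Serre in torf}, \cref{fct:classify Serre in torf}, and \cref{prp:classify torf in torf} together with the observation $\pow(\Min R) = \upper(\Min R)$. The paper's argument is simply a terser version of exactly what you wrote.
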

\begin{proof}
The category $\cm R$ is a torsion-free class of $\catmod R$ such that $\Ass(\cm R)=\Min R$ 
by \cref{ex:cm R is torf}.
Thus, the corollary follows from
\cref{prp:tors=Serre in torf,fct:classify Serre in torf,prp:classify torf in torf}
since $\pow(\Min R)=\upper(\Min R)$.
\end{proof}



\begin{thebibliography}{NNNN00}
\bibitem[AB56]{AB1}
 M.\ Auslander, D.\ A.\ Buchsbaum,
 \emph{Homological dimension in noetherian rings},
 Proc.\ Nat.\ Acad.\ Sci.\ U.S.A. \textbf{42}, (1956). 36--38.

\bibitem[AB57]{AB2}
 M.\ Auslander, D.\ A.\ Buchsbaum,
 \emph{Homological dimension in local rings},
 Trans.\ Amer.\ Math.\ Soc.\ \textbf{85} (1957), 390--405.

\bibitem[Bou89]{Bourbaki CA}
 N.\ Bourbaki,
 \emph{Commutative Algebra: Chapters 1--7},
 Springer, 1989.

\bibitem[BD08]{BD}
 I.\ Burban, Y.\ Drozd,
 \emph{Maximal Cohen-Macaulay modules over surface singularities},
 Trends in representation theory of algebras and related topics, 101--166,
 EMS Ser.\ Congr.\ Rep.,
 European Mathematical Society (EMS), Zurich, 2008

\bibitem[BH93]{BH}
 W.\ Bruns, J.\ Herzog,
 \emph{Cohen-Macaulay rings},
 Cambridge Studies in Advanced Mathematics, \textbf{39},
 Cambridge University Press, Cambridge, 1993.

\bibitem[CFF02]{CFF02}
 L.\ W.\ Christensen, H.-B.\ Foxby, A.\ Frankild,
 \emph{Restricted homological dimensions and Cohen-Macaulayness},
 J.\ Algebra \textbf{251} (2002), no.\ 1, 479--502.

\bibitem[DT15]{DT}
 H.\ Dao, R.\ Takahashi,
 \emph{Classification of resolving subcategories and grade consistent functions},
 Int.\ Math.\ Res.\ Not.\ (2015), no.\ 1, 119--149.

\bibitem[Eno]{Eno}
 H.\ Enomoto,
 \emph{IE-closed subcategories of commutative rings are torsion-free classes},
 preprint (2023), arXiv:2304.03260v2.

\bibitem[Gab62]{Gab}
 P.\ Gabriel,
 \emph{Des cat\'{e}gories ab\'{e}liennes},
 Bull.\ Soc.\ Math.\ France \textbf{90} (1962), 323--448.

\bibitem[GW11]{Goto-Watanabe}
 S.\ Goto, K.\ Watanabe,
 \emph{Commutative algebra} (Japanese),
 Nippon Hyoron Sha Co.,Ltd. Publishers, 2011.

\bibitem[HS06]{HS}
C.\ Huneke, I,\ Swanson, \emph{Integral closure of ideals, rings, and modules}, London Mathematical Society Lecture Note Series, \textbf{336}. Cambridge University Press, 2006. xiv+431 pp.

\bibitem[IMST]{IMST}
 K.\ Iima, H.\ Matsui, K.\ Shimada, R.\ Takahashi,
 \emph{When is a subcategory Serre or torsionfree?},
Publ.\ Res.\ Inst.\ Math.\ Sci. (to appear), arXiv:2203.00228.

\bibitem[IK]{IK}
 O.\ Iyama, Y.\ Kimura,
 \emph{Classifying subcategories of modules over Noetherian algebras},
 preprint (2021), arXiv:2106.00469v5.

\bibitem[LW00]{LW} 
G.\ Leuschke, R.\ Wiegand, \emph{Ascent of finite Cohen-Macaulay type}, J.\ Algebra \textbf{228} (2000), no.\ 2, 674--681.

\bibitem[Sai]{Sai2}
 S.\ Saito,
 \emph{Classifying torsionfree classes of the category of coherent sheaves and their Serre subcategories},
 preprint (2023), arXiv:2304.06918v2.

\bibitem[Ser56]{Ser}
 J.-P.\ Serre,
 \emph{Sur la dimension homologique des anneaux et des modules noeth\'{e}riens},
 Proc.\ Int.\ Symp.\ Tokyo--Nikko 1955,
 Science Council of Japan, Tokyo, 1956, 175--189.

\bibitem[SW11]{SW}
 D.\ Stanley, B.\ Wang,
 \emph{Classifying subcategories of finitely generated modules over a Noetherian ring},
 J.\ Pure Appl.\ Algebra \textbf{215} (2011), no.\ 11, 2684--2693.

\bibitem[Tak08]{Takahashi}
 R.\ Takahashi,
 \emph{Classifying subcategories of modules over a commutative Noetherian ring},
 J.\ Lond.\ Math.\ Soc.\ (2) \textbf{78} (2008), no.\ 3, 767--782.

\bibitem[Tak21]{Takahashi2}
 R.\ Takahashi,
 \emph{Classification of dominant resolving subcategories by moderate functions},
 Illinois J.\ Math.\  \textbf{65} (2021), no.\ 3, 597--618.
\end{thebibliography}
\end{document}